  \DeclareMathSymbol{:}{\mathpunct}{operators}{"3A}
 \newcommand{\cof}{\text{cof}}
 \newcommand{\fib}{\text{fib}}
  \newcommand{\sievec}{\operatorname{\downarrow}}
  \newcommand{\sievei}{\mathrel{\triangleleft}}
   \tikzset{ito/.style={hook}}
   \tikzset{ito/.style={-triangle}}
   \newcommand{\cto}{\rightarrowtail}
   \tikzset{cto/.style={>->}}
   \newcommand{\acyc}{\sim}
   \newcommand{\triv}{\approx}
   \newcommand{\ano}{a}
   \tikzset{tcto/.style={cto,"\triv"}}
   \tikzset{acto/.style={cto,"\acyc"}}
   \newcommand{\fto}{\twoheadrightarrow}
   \tikzset{fto/.style={->>}}
   \tikzset{tfto/.style={fto,"\triv"}}
   \tikzset{afto/.style={fto,"\acyc"}}
   \renewcommand{\Pr}{\operatorname{Pr}}
\newcommand{\Wb}{\mathbb{W}}
\newcommand{\Acal}{\mathcal{A}} 
\newcommand{\Zcal}{\mathcal{Z}} 
\newcommand{\Ecal}{\mathcal{E}}
\newcommand{\Ycal}{\mathcal{Y}}
\newcommand{\Pcal}{\mathcal{P}} 
\newcommand{\Scal}{\mathcal{S}}
\newcommand{\Wcal}{\mathcal{W}} 
\newcommand{\Xcal}{\mathcal{X}} 
\newcommand{\Ccal}{\mathcal{C}} 
\newcommand{\Bcal}{\mathcal{B}}
\theoremstyle{plain}
\newtheorem{theorem}{Theorem}[section]
\newtheorem{lemma}[theorem]{Lemma}
\newtheorem{prop}[theorem]{Proposition}
\newtheorem{cor}[theorem]{Corollary}
\theoremstyle{remark}
\theoremstyle{definition}
\newtheorem{definition}[theorem]{Definition}
\newtheorem{remark}[theorem]{Remark}
\newtheorem{notation}[theorem]{Notation}
\newtheorem{example}[theorem]{Example}
\newtheorem{construction}[theorem]{Construction}
\newtheorem{assumptions}[theorem]{Assumptions}
\DeclareMathOperator*{\colim}{Colim}
\begin{document}

\pagestyle{plain}
\title{Minimal model structures}
\date{}

\author{Simon Henry}

\maketitle

\begin{abstract} We prove, without set theoretic assumptions, that every locally presentable category $\Ccal$ endowed with a tractable cofibrantly generated class of cofibrations has a unique minimal (or left induced) Quillen model structure. More generally, for any set $S$ of arrows in $\Ccal$ we construct the minimal model structure on $\Ccal$ with the prescribed cofibrations and making all the arrows of $S$ weak equivalences. We describe its class of equivalences as the ``smallest Cisinski localizer containing $S$''. Our proof rely on a careful use of the fat small object argument and J.~Lurie's ``good colimits'' technology and on the author previous work on combinatorial weak model categories and semi-model categories. We also obtain similar results for left semi-model categories.
\end{abstract}

\renewcommand{\thefootnote}{\fnsymbol{footnote}} 
\footnotetext{\emph{2020 Mathematics Subject Classification.} 18N40, 18C35. \emph{Keywords.} Model categories.}
\footnotetext{\emph{email:} shenry2@uottawa.ca}
\footnotetext{This work was partially supported by an NSERC Discovery Grant.}
\renewcommand{\thefootnote}{\arabic{footnote}} 



\tableofcontents

\section{Main results}

We call \emph{structured category}, a category $\Ccal$ with all limits and colimits and equipped with a weak factorization system (cofibrations, anodyne fibrations). 

\begin{definition}\label{def:localizer} A \emph{localizer} $\Wcal$ on a structured category $\Ccal$, or \emph{$\Ccal$-localizer}, is a class of arrows such that:

\begin{enumerate}[label=(\arabic*)]

\item $\Wcal$ satisfies the $2$-out-of-$3$ condition.

\item All anodyne fibrations of $\Ccal$ are in $\Wcal$.

\item The class of cofibrations that are in $\Wcal$ is closed under pushout and transfinite compositions. 

\end{enumerate}

\end{definition}

If $\Ccal$ is a Quillen model category, then its class of weak equivalences is clearly a localizer. Conversely, J.~Smith famously proved\footnote{Smith never published his proof. It originally appeared in print in \cite{beke2000sheafifiable} as Theorem~1.7. See also Proposition A.2.6.10 in \cite{lurie2009higher} and Proposition~1.7 in \cite{barwick2007enriched}. The last reference is the closest to what we claim here.} that if $\Ccal$ is a structured category, and $\Wcal$ is a $\Ccal$-localizer such that:

\begin{itemize}

\item $\Ccal$ is locally presentable,

\item the (cofibration,anodyne fibration) weak factorization system is cofibrantly generated,

\item $\Wcal$ satisfies the solution set condition, or equivalently is an accessible and accessibly embedded full subcategory of $\Ccal^\to$,

\end{itemize}

\noindent then $\Ccal$ has a combinatorial Quillen model structure whose class of equivalences is $\Wcal$ and whose cofibrations and anodyne fibrations are as specified above. 

We say that a structured category $\Ccal$ is \emph{combinatorial} when it satisfies the first two conditions above, i.e. $\Ccal$ is locally presentable and its weak factorization system is cofibrantly generated. We say, more precisely, that $\Ccal$ is \emph{$\lambda$-combinatorial} if it is locally $\lambda$-presentable and it admits a set of generating cofibrations between $\lambda$-presentable objects. We also recall the terminology

\begin{definition} A combinatorial structured category is said to be \emph{tractable} if it admits a set of generating cofibrations that have cofibrant domains. A model category\footnote{Or a pre-model category as in item \ref{item:def_premodel} of \cref{sec:wms}.} is said to be \emph{tractable} if it admits generating sets of cofibrations and anodyne cofibrations that have cofibrant domains. \end{definition}

In the present paper we aim at replacing the last condition of Smith's theorem by another one, namely the notion of small-generated localizer:

\begin{definition} Let $\Ccal$ be a structured category. Given $S$ a class of arrows in $\Ccal$ the \emph{localizer generated by $S$} is smallest localizer $\Wcal(S)$ on $\Ccal$ containing $S$.  A localizer is said to be \emph{small-generated} if it is of the form $\Wcal(S)$ for $S$ a set of arrows. 
\end{definition}

The localizer $\Wcal(S)$ generated by $S$ exists because an arbitrary intersection of localizer is a localizer. This argument works if $\Ccal$ is a small category (for example if large categories are handled using a Grothendieck universes), but in pure ZFC, this argument does not directly apply to a large category. It seems that a more complicated ``predicative'' construction of the localizer generated by a class can be made in ZFC, but we will not detail this. In any cases, our proof of theorem \ref{main_th:Quillen} and \ref{main_th:Quillen_refined} will show in particular that, under the assumptions of these theorems, the localizer generated by any set $S$ exists.

Our main result is:

\begin{theorem}\label{main_th:Quillen} Let $\Ccal$ be a tractable combinatorial structured category. Then a class of arrows $\Wcal$ in $\Ccal$ is the class of equivalences of a combinatorial Quillen model structure on $\Ccal$ if and only if $\Wcal$ is a small-generated $\Ccal$-localizer.
\end{theorem}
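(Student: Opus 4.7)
The proof has two directions. The direction ``Quillen model structure $\Rightarrow$ small-generated localizer'' is the easier one. Assuming $\Wcal$ is the class of equivalences of such a model structure, pick a set $J$ of generating trivial cofibrations, available by combinatoriality. I would show $\Wcal = \Wcal(J)$. The inclusion $\Wcal(J) \subseteq \Wcal$ is immediate since $\Wcal$ is itself a localizer containing $J$. For the reverse inclusion, factor any $f \in \Wcal$ via the small object argument for $J$ as $f = p \circ j$, where $j$ is a transfinite composition of pushouts of arrows in $J$ (hence in $\Wcal(J)$ by the closure axioms of a localizer) and $p$ has the right lifting property against $J$. Then $p$ is a fibration; by 2-out-of-3 in $\Wcal$ it is also a weak equivalence, hence a trivial fibration, i.e., an anodyne fibration in $\Ccal$, which lies in $\Wcal(J)$ by axiom~(2). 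A final use of 2-out-of-3 yields $f \in \Wcal(J)$.

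The substance of the theorem lies in the converse. Given a set $S$, one must construct a combinatorial Quillen model structure on $\Ccal$ whose cofibrations are the prescribed ones and whose equivalences are $\Wcal(S)$. By the version of Smith's theorem recalled above, it suffices to exhibit a $\Ccal$-localizer $\Wcal$ containing $S$ which is an accessible, accessibly embedded full subcategory of $\Ccal^{\to}$, and then to verify $\Wcal = \Wcal(S)$ (ensuring minimality). My plan is to define $\Wcal$ via an orthogonality-type criterion tailored to the fat small object argument, so that accessibility is built in from the outset. Concretely, I would choose a regular cardinal $\lambda$ such that $\Ccal$ is $\lambda$-combinatorial, every arrow of $S$ and every generating cofibration is between cofibrant $\lambda$-presentable objects (using tractability), and a suitable functorial factorization preserves $\lambda$-filtered colimits. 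Then I would construct a functorial ``$S$-fibrant replacement'' whose $\lambda$-accessible algebra of fibrant objects controls $\Wcal$, defining $\Wcal$ as the class of arrows becoming equivalences after this replacement; accessibility of $\Wcal$ should then follow from accessibility of the replacement functor together with Lurie's good-colimit technology.

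The main obstacle is simultaneously achieving the three localizer axioms \emph{and} the accessibility of $\Wcal$: naively iterating closure under 2-out-of-3 with cellular closure over $S$ tends to destroy accessibility or to produce a class larger than $\Wcal(S)$. I expect to circumvent this by routing the construction through a weaker notion. First I would build a combinatorial left semi-model or weak model structure on $\Ccal$ whose generating trivial cofibrations derive from $S$ via the fat small object argument; this is less rigid than a Quillen structure and is within reach of the author's earlier machinery for combinatorial semi-model and weak model categories. I would then show that the resulting class of weak equivalences is already a $\Ccal$-localizer and coincides with $\Wcal(S)$: containment in every localizer containing $S$ would come from a Ken~Brown style argument inside the semi-model structure, factoring arrows as $S$-cellular maps composed with anodyne fibrations. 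Finally, I would promote the semi-model structure to a full Quillen model structure using the author's previous results, and invoke Smith's theorem to package the outcome with the accessibility of $\Wcal(S)$ now established. As a byproduct, this also shows that $\Wcal(S)$ exists in ZFC without recourse to intersections over proper classes.
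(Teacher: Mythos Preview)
Your argument for the forward direction (Quillen model structure $\Rightarrow$ small-generated localizer) is correct and coincides with the paper's.

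For the converse, however, your plan has a genuine gap. You propose to establish accessibility of a candidate class $\Wcal$ and then invoke Smith's theorem, but you never explain how accessibility is actually obtained: the sentence ``accessibility of $\Wcal$ should then follow from accessibility of the replacement functor together with Lurie's good-colimit technology'' hides exactly the difficulty the paper is designed to overcome. Defining $\Wcal$ as ``arrows becoming equivalences after an $S$-fibrant replacement'' is circular unless you already have a homotopical structure in which ``equivalence'' makes sense, and the obvious candidates (e.g.\ isomorphisms after replacement) do not produce $\Wcal(S)$. Your fallback --- build a semi-model structure, then ``promote'' it via the author's previous results and Smith --- is also not a strategy: there is no general promotion theorem from left semi-model to Quillen model structures, and this is precisely where the paper does real work.

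The paper's route is different and more concrete. It never tries to prove accessibility of $\Wcal(S)$ directly. Instead it fixes a cardinal $\kappa$ large enough that (cofibration, anodyne fibration) factorizations exist within $\Pr_\kappa\Ccal$, and takes $J$ to be \emph{all} cofibrations in $\Wcal$ between $\kappa$-presentable objects (not merely maps built from $S$). This $J$ is shown to satisfy the hypotheses of \cref{th:left_semi_ms_from_small}: condition~(iii') is verified by forming the relative cylinder $B\coprod_A B \cto I_A B \overset{\ano}{\twoheadrightarrow} B$ inside $\Pr_\kappa\Ccal$, which is where tractability and the choice of $\kappa$ are used. This yields a left semi-model structure. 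The passage to a Quillen structure is then the content of \cref{prop:J_cof_are_eq}: every $J$-cofibration (not just the core ones) is a weak equivalence, proved by writing an arbitrary $J$-cellular map as a $\kappa$-filtered colimit of maps in $J$ via the fat small object argument, and using that weak equivalences in a $\kappa$-combinatorial left semi-model category are closed under $\kappa$-filtered colimits. Only after the Quillen structure is built does one read off $\Wcal(S)$ as its class of equivalences; accessibility is then a consequence, not an input.
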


Note that, here and everywhere else in the paper, if $\Ccal$ is a structured category, when we talk about a model structure on $\Ccal$ we implicitly means ``with the prescribed class of cofibrations and anodyne fibrations''.\footnote{If we wanted to refer to an arbitrary model structure on $\Ccal$ we would say ``a model structure on the underlying category of $\Ccal$''.} By Smith's theorem, our result can be rephrased as:

\begin{theorem} A localizer $\Wcal$ on a tractable combinatorial structured category $\Ccal$ satisfies the solution set condition if and only if it is small-generated.\end{theorem}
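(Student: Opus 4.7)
The plan is to obtain this statement as a direct corollary of \cref{main_th:Quillen} combined with Smith's theorem as stated in the excerpt. Since both tools are available, the proof reduces to two short implications.

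For the forward direction, assume $\Wcal$ satisfies the solution set condition. Smith's theorem then produces a combinatorial Quillen model structure on $\Ccal$ whose class of weak equivalences is precisely $\Wcal$. Applying the ``only if'' direction of \cref{main_th:Quillen}, we conclude that $\Wcal$ is small-generated.

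For the converse, assume $\Wcal$ is small-generated. The ``if'' direction of \cref{main_th:Quillen} furnishes a combinatorial Quillen model structure on $\Ccal$ whose equivalences are $\Wcal$. It is a standard fact that the weak equivalences of a combinatorial model category form an accessible and accessibly embedded full subcategory of $\Ccal^\to$ (see for instance the appendix of \cite{lurie2009higher}), which by the equivalence recalled in the excerpt amounts to the solution set condition on $\Wcal$.

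The real work is of course not in this short derivation but in \cref{main_th:Quillen} itself, which forms the bulk of the paper and whose ``if'' direction depends on the fat small object argument together with Lurie's good colimits technology. The present restatement is essentially a bookkeeping observation: it translates the two equivalent characterizations of the class of equivalences of a combinatorial model structure (``admits a solution set'' versus ``is generated by a set'') into a direct equivalence, once Smith's theorem has been taken as a black box.
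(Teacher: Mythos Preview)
Your proposal is correct and follows exactly the approach the paper itself takes: the paper presents this theorem immediately after \cref{main_th:Quillen} with the sentence ``By Smith's theorem, our result can be rephrased as'' and gives no further argument. You have simply made explicit the two short implications the paper leaves to the reader, namely that Smith's theorem bridges the solution set condition to the existence of a combinatorial model structure, while \cref{main_th:Quillen} bridges small-generation to the same, and the well-known accessibility of the class of weak equivalences in a combinatorial model category supplies the remaining converse.
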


Tractability is a very mild assumption (the author is not aware of any usual model structure that is not tractable). We moreover believe that this assumption can be removed altogether, but we have not been able to do so at this point. In the hope that this will be solved someday, we mention the best result in this direction we have been able to obtain with our method:

\begin{theorem}\label{main_th:Quillen_refined} Given a combinatorial structured category $\Ccal$ and $\Wcal=\Wcal(S)$ a small-generated localizer on $\Ccal$, then $\Wcal$ is the class of equivalence of a Quillen model structure on $\Ccal$ if and only if there is a set $J_0$ of arrows such that:

  \begin{itemize}
  \item All arrows in $J_0$ are cofibrations in $\Wcal$.
  \item If a map $p$ in $\Ccal$ has the right lifting property against all cofibrations between cofibrant objects and all maps in $J_0$, then $p$ is an anodyne fibration.
  \end{itemize}

\end{theorem}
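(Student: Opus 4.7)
The ``only if'' direction is a routine consequence of Smith's theorem together with the standard fact that, in a Quillen model structure on a combinatorial structured category, any generating set of anodyne cofibrations satisfies conditions (i) and (ii) of the statement (the cofibrations between cofibrant objects being used only to force a fibration that satisfies condition (ii) to be an equivalence via a Ken Brown--type argument applied to cofibrant replacements). The content of the theorem is the ``if'' direction, for which the strategy is to build the model structure in two stages: first produce a left semi-model structure on $\Ccal$ using the tractable-like behaviour of cofibrations with cofibrant domain, and then upgrade it to a full Quillen model structure using $J_0$ to cover the failure of tractability at the level of arbitrary domains.

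More precisely, the previous results of the paper (combined with the author's earlier work on combinatorial semi-model categories cited in the abstract), applied to the small-generated localizer $\Wcal = \Wcal(S)$ on the combinatorial structured category $\Ccal$, should provide a left semi-model structure on $\Ccal$ with weak equivalences $\Wcal$ together with a set $J_{sm}$ of generating anodyne cofibrations, all with cofibrant domain, characterized by: a map $p$ is a fibration of this semi-model structure iff $p \in \mathrm{RLP}(J_{sm})$, and such a fibration is an anodyne fibration of the semi-model structure (i.e.\ has the right lifting property against every cofibration with cofibrant domain) iff it belongs to $\Wcal$.

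Now set $J := J_{sm} \cup J_0$. Since $J$ is a set of cofibrations in $\Wcal$ between suitably presentable objects (after the bookkeeping needed to legitimize the small object argument), we obtain a functorial factorization of every map as an element of $\cof(J)$ followed by an element of $\mathrm{RLP}(J)$. To exhibit the Quillen model structure with weak equivalences $\Wcal$ and generating anodyne cofibrations $J$, the only nontrivial verification is that every $p \in \Wcal \cap \mathrm{RLP}(J)$ is an anodyne fibration. For such $p$, the inclusion $\mathrm{RLP}(J) \subseteq \mathrm{RLP}(J_{sm})$ makes $p$ a fibration of the semi-model structure, and being in $\Wcal$ upgrades it to an anodyne fibration there, i.e.\ $p$ has the right lifting property against every cofibration with cofibrant domain. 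Since $p$ also has the right lifting property against $J_0$, condition (ii) then immediately yields that $p$ is an anodyne fibration of $\Ccal$, as required.

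The main obstacle is the construction of the left semi-model structure in the first step, without assuming tractability of $\Ccal$. This is the delicate technical content developed earlier in the paper, relying on the fat small object argument and Lurie's good colimits technology to control the behaviour of the localizer on cofibrations with cofibrant domain, where the cofibrant-domain condition plays the role of a local tractability hypothesis. Once this semi-model structure is in place, the remainder of the argument is essentially a direct application of the small object argument combined with condition (ii), which is designed precisely to bridge the gap between ``fibrations of the semi-model structure'' and ``anodyne fibrations of the full structured category $\Ccal$''.
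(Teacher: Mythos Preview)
Your proposal contains a genuine gap at the point where you claim that ``the previous results of the paper \dots\ should provide a left semi-model structure on $\Ccal$ with weak equivalences $\Wcal$ together with a set $J_{sm}$ of generating anodyne cofibrations, all with cofibrant domain''. The paper does construct a Fresse left semi-model structure on $\Ccal$ (\cref{prop:C_is_a_left_semi}), but it uses as generating anodyne cofibrations the set $J$ of \emph{all} cofibrations in $\Wcal$ between $\kappa$-presentable objects, not only those with cofibrant domain, and at that stage it does \emph{not} know that the class $\Ecal$ of weak equivalences of this semi-model structure coincides with $\Wcal$. The inclusion $\Ecal \subseteq \Wcal$ is indeed easy, but the reverse inclusion $\Wcal \subseteq \Ecal$ is essentially the content of the theorem: it amounts to showing that every $J$-fibration lying in $\Wcal$ is an anodyne fibration, which is precisely the step you are trying to deduce \emph{from} the identification $\Ecal=\Wcal$. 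Note that $\Ecal$ is not known to be a localizer (closure of non-core trivial cofibrations under pushout is not available in a mere left semi-model category), so one cannot conclude $\Wcal(S)\subseteq\Ecal$ from $S\subseteq\Ecal$. Nor can one invoke \cref{sec:left_semi_localizer}: while every localizer is a left semi-localizer, there is no reason for the small-generated localizer $\Wcal(S)$ to be small-generated \emph{as a left semi-localizer}; as discussed in \cref{rk:no_Bousfield_loc}, one typically has $\Wcal^L(S)\subsetneq\Wcal(S)$, precisely because of pushouts along cofibrations with non-cofibrant domain.

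The paper proceeds in the opposite order. After obtaining the Fresse left semi-model structure and establishing saturation (\cref{lem:left_sat}, where $J_0\subseteq J$ is used for right saturation), the decisive step is \cref{prop:J_cof_are_eq}: \emph{every} $J$-cofibration, including those with non-cofibrant domain, is a weak equivalence of the semi-model structure. This is where the fat small object argument is genuinely needed: an arbitrary $J$-cellular map is expressed as a $\kappa$-filtered colimit of maps in $J$, each of which is an equivalence by a cofibrant-replacement trick, and one then invokes closure of equivalences under $\kappa$-filtered colimits (\cref{lem:kappa_filtered_colim_of_equivalence}). Only once this is in hand does the structure become Quillen, and only then is the identification $\Ecal=\Wcal(S)$ made via the short argument in \cref{th:final_Quillen}. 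Your outline skips exactly this step, and the role you assign to $J_0$ (bridging a cofibrant-domain semi-model structure to a Quillen one) presupposes an identification of weak equivalences that has not yet been earned.
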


The tractability assumption exactly means that a map with the lifting property against cofibrations between cofibrant objects is an anodyne fibration, so \cref{main_th:Quillen} is the special case of \cref{main_th:Quillen_refined} with $J_0=\emptyset$.

As the existence of such a $J_0$ is a necessary and sufficient condition, it might seems that this completely settle the question of whether each small-generated localizer is the class of equivalences of a Quillen model structure. However we do not know any example where there is no such set $J_0$, and it is still possible that this condition could be removed. We expect it to be difficult to come up with examples where this assumption fails, first because essentially all interesting examples are tractable, hence $J_0 = \emptyset$ works, and also because, as we will discuss below, it can be shown that such a $J_0$ always exists assuming the very strong large cardinal axiom known as Vop\v{e}nka's principle.

\begin{example} A key example is the minimal $\Ccal$-localizer $\Wcal(\emptyset)$. Our theorem imply that, when $\Ccal$ is tractable and combinatorial, it is the class of equivalences of a model structure on $\Ccal$, called the minimal model structure on $\Ccal$. The model structure of this form are exactly the ``Left induced model structure'' as in \cite[Definition 2.1]{rosicky2003left}. This special case of our theorem hence answer the open question of showing that any cofibrantly generated class of cofibrations on a locally presentable category admits a left induced model structure, at least in the case of a tractable class of cofibrations. 
\end{example}

\begin{remark} The existence of left induced model structure for combinatorial structured categories was already showed in \cite{rosicky2003left} as Theorem 2.2, without the tractability assumption, but assuming a very strong large cardinal axiom known as Vop\v{e}nka's principle. Indeed, under Vop\v{e}nka's principle, every localizer on a locally presentable category satisfy the solution set conditions and hence the result follows from Smith's theorem. As this apply to all localizer it also shows that assuming Vop\v{e}nka's principle, \ref{main_th:Quillen} can be proved with no tractability assumptions (and without the small-generated assumption), and with a much simpler proof that what we will give here. 

However, even if the reader is willing to assume Vop\v{e}nka's principle, our result can be seen as a considerable improvement on the bound we obtain on the presentability rank of the corresponding model structure. Our proof gives a fairly concrete bound on this presentability rank described in \cref{ass:on_kappa}. On the other hand the argument above (assuming Vop\v{e}nka principle) only shows that it is it $\kappa$-combinatorial for $\kappa$ the first ``Vop\v{e}nka cardinal'' larger than all the involved data, which is an immensely large cardinal.

\end{remark}

\begin{remark}\label{rk:no_Bousfield_loc} \Cref{main_th:Quillen,main_th:Quillen_refined} can give the false impression to the reader that all left Bousfield localization of combinatorial Quillen model category exist: Indeed if $\Ccal$ is a combinatorial model structure then its class of equivalences is of the form $\Wcal(S)$ for some set $S$, if we want to further invert a set $T$ of arrows we can consider the localizer $\Wcal(S \cup T)$. It will always satisfies the assumption of \ref{main_th:Quillen_refined} because it contains $\Wcal(S)$ which does. So this produces a new model structure $\Ccal'$ on the same underlying structured category as $\Ccal$, i.e.  with the same cofibrations as $\Ccal$, and which is minimal for making all arrows in $T$ into weak equivalences. So this does look a lot like a Bousfield localization of $\Ccal$ at $T$.  In fact, as they have the same cofibrations, $\Ccal'$ is indeed a left Bousfield localization of $\Ccal$. But the point here is that $\Ccal$ does not have to be the left Bousfield localization of $\Ccal$ \emph{at the set $T$}. More precisely:

  \begin{itemize}
  \item The fibrant objects of $\Ccal'$ are not necessarily the $T$-local fibrant objects of $\Ccal$.
  \item The $\infty$-category associated to $\Ccal'$ might not the (co-continuous) localization of the one attached to $\Ccal$ at $T$.
  \end{itemize}

Of course, if the left Bousfield localization of $\Ccal$ at $T$ exists, then it coincide with $\Ccal'$, and $\Ccal'$ has the properties above. But, in general $\Ccal'$ can be a Bousfield localization at a larger set of arrows than $T$. We invite the reader who want to get a better understanding of this to look at what happen on Reid Barton's simple of example of a non-existing left Bousfield localization that we reproduced as example 5.4 in \cite{henry2019CombWMS}. Informally, in the construction of the localizer generated by $T$, one can sometime derive that some maps are in the localizer using pushout along cofibrations between non-cofibrant objects that are not homotopy pushouts. Because these pushouts are not homotopy pushouts, it means that potentially these maps have no good ``homotopy theoretic'' reasons to be weak equivalences in the localization, and hence the localizer might contains map that shouldn't get inverted in a left Bousfield localization.

In \cref{sec:left_semi_localizer}, we will present a version of \cref{main_th:Quillen} for left semi-model categories. As left Bousfield localization of combinatorial left semi-model categories always exists  (see \cite{batanin2020Bousfield} or \cite{henry2019CombWMS}), in this case the theory boils down to constructing the minimal (or left induced) left semi-model category on $\Ccal$, and then taking a left Bousfield localization of it. In this case the corresponding notion of localizer is only stable under pushout of cofibrations when all the objects involved are cofibrant. As such pushouts are always homotopy pushouts, the problem explained above disappear.
\end{remark}

\section{Overview of the proof}

In \cite{cisinski2002theories} and \cite{cisinski2006prefaisceaux}, D.-C.~Cisinski famously proved the very important special case of \cref{main_th:Quillen} where $\Ccal$ is a Grothendieck topos and cofibrations are the monomorphisms (in particular all objects are cofibrant, so these are tractable structured categories). Cisinski's proof of the result can be separated into two parts:

\begin{enumerate}

\item First he shows that (in this special case of a topos and monomorphisms) one can construct explicitly a model structure out of a class of cofibrations and a well behaved cylinder functor.

\item He observes that taking the product by the subobject classifier of $\Ccal$ always gives such a well behaved functor, which works for any localizer.

\end{enumerate}

The first part generalizes relatively well to arbitrary structured categories. This has been done by M.~Oslchok in \cite{olschok2011left} for combinatorial structured categories where every object is cofibrant, and by the author in \cite{henry2019CombWMS} for arbitrary combinatorial structured categories, but at the cost of only constructing a left semi-model category. In this case a ``well behaved cylinder functor'' means a ``strong pre-Quillen cylinder'' in the terminology of \cite{henry2019CombWMS} or a ``cartesian cylinder'' in the terminology of \cite{olschok2011left}.

However, constructing such cylinder functor for large classes of structured categories is delicate, and we do not think it is possible to do it for all combinatorial structured category, or even under some other mild assumptions. So this approach does not seem suitable to prove \cref{main_th:Quillen}, we need a new method.

\bigskip

Our strategy to prove \cref{main_th:Quillen} can instead be summarized as follows: Given a small-generated localizer $\Wcal=\Wcal(S)$, we consider $\kappa$ a ``large enough\footnote{See \cref{ass:on_kappa} for the precise size.} cardinal'' and we take $J$ to be the set of all cofibrations between $\kappa$-presentable objects that are in $\Wcal$. We then use $J$ as our set of generating anodyne cofibrations and show that this generates a Quillen model structure, and we check that its class of weak equivalences coincide with $\Wcal(S)$.

The hard part in the above argument is of course to show that this set $J$ indeed generates a Quillen model structure on $\Ccal$: By construction of $J$ we only have a good control on what happen in the full subcategory $\Pr_\kappa \Ccal \subset \Ccal$ of $\kappa$-presentable objects. So we somehow need to show that $\Ccal$ has a Quillen model structure relying only on properties of $\Pr_\kappa \Ccal$. The first result we know of that allows to do this is due to Zhen Lin low in \cite{low2016heart} and can be phrased as:

\begin{theorem}\label{th:Zhen_Lin} If $\Ccal$ is locally $\kappa$-presentable and the category $\Pr_\kappa \Ccal$ of $\kappa$-presentable objects of $\Ccal$ carries a model structure, then there is a model structure on $\Ccal$ whose fibrations (resp.  anodyne fibrations) are the maps with the lifting properties against anodyne cofibrations (resp. cofibrations) in $\Pr_\kappa \Ccal$. Any combinatorial model structure on $\Ccal$ is obtained in this way from a model structure on $\Pr_\kappa \Ccal$ for some regular cardinal $\kappa$. \end{theorem}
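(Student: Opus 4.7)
The plan is to transport the model structure from $\Pr_\kappa \Ccal$ to all of $\Ccal$ by taking the cofibrations and anodyne cofibrations of $\Pr_\kappa \Ccal$ as generating sets. Since $\Pr_\kappa \Ccal$ is essentially small, fix set-theoretic representatives $I$ for the cofibrations and $J$ for the anodyne cofibrations of $\Pr_\kappa \Ccal$. Apply Quillen's small object argument in $\Ccal$ to $I$ and to $J$ separately, producing two functorial weak factorization systems: $(\cell(I),\, I\text{-inj})$, declared to be (cofibrations, anodyne fibrations); and $(\cell(J),\, J\text{-inj})$, declared to be (anodyne cofibrations, fibrations). This already gives two interlocking weak factorization systems on $\Ccal$, so the remaining task is to pin down the weak equivalences and to verify the 2-out-of-3 axiom.

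Define $f\colon X \to Y$ to be a weak equivalence if it admits a factorization $X \acto Z \afto Y$ with the first map an anodyne cofibration and the second an anodyne fibration. A Ken-Brown-style argument shows this is equivalent to the cofibration in the $(\cell(I), I\text{-inj})$-factorization of $f$ being anodyne, and also equivalent to the fibration in the $(\cell(J), J\text{-inj})$-factorization of $f$ being an anodyne fibration. The crucial compatibility check is that, when restricted to arrows between objects of $\Pr_\kappa \Ccal$, this class coincides with the given class of weak equivalences of $\Pr_\kappa \Ccal$. For $\kappa$ suitably chosen, the small object argument applied to generators between $\kappa$-presentable objects can be arranged to stay inside $\Pr_\kappa \Ccal$, so the $\Ccal$-factorizations of such arrows are in fact factorizations already in $\Pr_\kappa \Ccal$. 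Once agreement on $\Pr_\kappa \Ccal$ is established, 2-out-of-3 in $\Ccal$ follows by an accessibility argument: every composable pair of arrows in $\Ccal$ is a $\kappa$-filtered colimit of composable pairs in $\Pr_\kappa \Ccal$, and the defined class of weak equivalences is closed under $\kappa$-filtered colimits in $\Ccal^\to$ because the left and right classes of each of the two weak factorization systems are $\kappa$-accessibly embedded in $\Ccal^\to$.

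For the second assertion, suppose $\Ccal$ carries a combinatorial Quillen model structure. By Smith's theorem the class of weak equivalences is accessibly embedded in $\Ccal^\to$; pick a regular cardinal $\kappa$ large enough that the model structure is $\kappa$-combinatorial, its generating sets of cofibrations and anodyne cofibrations lie between $\kappa$-presentable objects, and its class of weak equivalences is $\kappa$-accessibly embedded. Then cofibrations, fibrations and weak equivalences all restrict to a genuine model structure on $\Pr_\kappa \Ccal$, the factorization axiom only needing transfinite compositions of length ${<}\kappa$ (which exist in $\Pr_\kappa \Ccal$); applying the construction of the first half to this restricted model structure then recovers the original one on $\Ccal$. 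The hardest step throughout is the compatibility verification of the second paragraph, i.e.\ showing that the factorization-based definition of weak equivalence in $\Ccal$ correctly restricts to $\Pr_\kappa \Ccal$ and that 2-out-of-3 propagates back to $\Ccal$; everything else is a careful bookkeeping of accessibility and of the small object argument.
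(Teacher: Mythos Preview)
The paper does not prove this theorem; it is stated as a result of Zhen Lin Low and attributed to \cite{low2016heart}, with no proof or sketch given. So there is no ``paper's own proof'' to compare against. I will therefore only comment on the internal soundness of your sketch.

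The overall architecture (take $I$ and $J$ to be the cofibrations and anodyne cofibrations of $\Pr_\kappa \Ccal$, run the small object argument in $\Ccal$, define weak equivalences via factorization) is the natural one, and the second assertion is handled correctly. But two steps in the first part are genuine gaps rather than routine bookkeeping.

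First, the phrase ``for $\kappa$ suitably chosen'' is illegitimate here: $\kappa$ is part of the hypothesis, not yours to enlarge. You do not actually need the small object argument in $\Ccal$ to land in $\Pr_\kappa \Ccal$; you only need that arrows of $\Pr_\kappa \Ccal$ admit \emph{some} factorization inside $\Pr_\kappa \Ccal$, and that is given to you by the assumed model structure on $\Pr_\kappa \Ccal$. So this step can be repaired, but as written it is wrong.

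Second, and more seriously, your $2$-out-of-$3$ argument is incomplete. Closure of weak equivalences under $\kappa$-filtered colimits only tells you that a colimit of weak equivalences is a weak equivalence. To run the argument you describe you need the converse direction: given $f,g$ with $f$ and $gf$ weak equivalences in $\Ccal$, you must exhibit a single $\kappa$-filtered diagram of composable pairs $(f_i,g_i)$ in $\Pr_\kappa \Ccal$ in which $f_i$ and $g_i f_i$ are weak equivalences of $\Pr_\kappa \Ccal$ at each stage (or at least cofinally). Nothing you have said produces such a diagram; knowing that the four classes of the two weak factorization systems are $\kappa$-accessibly embedded does not by itself let you approximate a given anodyne-cofibration/anodyne-fibration factorization in $\Ccal$ by such factorizations in $\Pr_\kappa \Ccal$ compatibly with a second arrow $g$. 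This is exactly where Low's argument (and, in the paper, the fat small object argument for the finer Theorems~\ref{th:wms_from_small} and~\ref{th:left_semi_ms_from_small}) does real work. Similarly, the ``Ken-Brown-style'' equivalence of your three descriptions of weak equivalence presupposes retract-type arguments that normally rely on already having a model structure, so as stated it risks circularity; you should instead prove directly that a cofibration admitting an (anodyne cofibration, anodyne fibration) factorization is an anodyne cofibration, which again needs more than you have written.
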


However, while we can (and will) chose a $\kappa$ such that the (cofibration,anodyne fibration) preserves $\kappa$-presentable objects, it does not seem possible to chose a $\kappa$ such that, with $J = \left( \Wcal \cap \Pr_\kappa \Ccal^\to \cap \cof \right) $ as defined above, the (anodyne cofibrations,fibration) weak factorization system restricts to a weak factorization system on $\kappa$-presentable objects. So Low's theorem is not sufficient for our purpose.

Instead we will rely on \cref{th:wms_from_small,th:left_semi_ms_from_small} below. They are finer results, in the same spirit as Low's theorem, but whose assumptions only involves cofibrations and anodyne cofibrations between $\kappa$-presentable objects and do not require explicitly the existence of weak factorization system on $\Pr_\kappa \Ccal$. Unfortunately, at this point we do not know how to formulate similar results directly for Quillen model structures, but only for ``weak model structures'' (A weakening of the notion introduced by the author in \cite{henry2018weakmodel} and \cite{henry2019CombWMS}) and for left semi-model structure (as introduced by Spitzweck in \cite{spitzweck2001operads}). A brief summary of the theory of weak model categories will be given in \cref{sec:wms}.

\begin{theorem}\label{th:wms_from_small} Let $\Ccal$ be a $\kappa$-combinatorial structured category, and let $J$ be a class of cofibrations between $\kappa$-presentable objects which satisfies the following conditions

\begin{enumerate}[label=(\roman*)]

\item $J$ contains isomorphisms, is stable under pushout and $\kappa$-small transfinite compositions. 

\item Given two composable cofibrations $A \overset{j}{\cto} B \overset{i}{\cto} C$ between $\kappa$-presentable cofibrant objects. If $j$ and $ i \circ j$ are in $J$, then so does $i$.

\item For any cofibration $i : A \cto B$ between $\kappa$-presentable cofibrant objects, there is a diagram of $\kappa$-presentable objects of the form below (using \cref{notation:cofibration_cof_square})
\begin{equation} 
\begin{tikzcd}
  A \ar[r,cto,"i"] \ar[dr,phantom,"\ulcorner"{name=M,description}]\ar[d,cto] & B \ar[d,cto,"\in J"] \\
B' \ar[r,cto,"\in J"{swap}] & C \ar[from = M,cto]
\end{tikzcd}
\end{equation}

\end{enumerate}

Then $\Ccal$ admits a weak model structure with its given class of cofibrations, and $J$ as sets of generating anodyne cofibrations.

\end{theorem}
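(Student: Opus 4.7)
The plan is to apply the fat small object argument to $J$ to produce a cofibrantly generated weak factorization system on all of $\Ccal$, whose left class we call the \emph{anodyne cofibrations} and whose right class we call the \emph{fibrations}. Combined with the given $(\cof, \text{anodyne fibrations})$ weak factorization system, this produces the two weak factorization systems required of a weak model structure in the sense of \cite{henry2019CombWMS}. The fat variant of the small object argument is essential here, as it ensures that the intermediate objects produced during factorization remain cofibrant whenever the source is, which is needed to apply hypothesis (iii) at each step of the construction.

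The technical heart of the proof is verifying the compatibility axioms of a weak model category, which are homotopical statements relating cofibrations and anodyne cofibrations between cofibrant objects, and dually between fibrant objects. Since hypotheses (i)--(iii) only concern $\kappa$-presentable cofibrant objects, we need to lift these properties to all of $\Ccal$. The strategy is to use Lurie's good colimits technology: any cofibration between cofibrant objects in $\Ccal$ can, in a sufficiently coherent way, be presented as a $\kappa$-filtered colimit of cofibrations between $\kappa$-presentable cofibrant objects. Condition (i) then guarantees that the globally defined class of anodyne cofibrations restricts to $J$ at the $\kappa$-presentable level; condition (ii) yields the cancellation/2-out-of-3 property for anodyne cofibrations between cofibrant objects; and condition (iii) gives, for an arbitrary cofibration between cofibrant $\kappa$-presentable objects, a completion into a pushout square with two parallel edges in $J$, which is exactly the ``cylinder-like'' homotopical compatibility between the two weak factorization systems demanded by the weak model category axioms.

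The main obstacle I anticipate is assembling these pointwise properties coherently into global statements. Specifically, condition (iii) provides a pushout square at the level of $\kappa$-presentable objects, and to apply it to an arbitrary cofibration between cofibrant objects in $\Ccal$, one needs to choose such pushout squares functorially along a good colimit diagram and check that their colimit is again the desired pushout square with parallel anodyne cofibrations. This is precisely where Lurie's good colimit technology is indispensable: the enhanced coherence in a good colimit (as opposed to an arbitrary $\kappa$-filtered one) makes it possible to promote pointwise data obtained from (iii) into data that assembles correctly in the colimit. Once this global version of (iii) is obtained, together with the analogous globalization of (ii), the remaining axioms of a weak model structure follow from the formal properties of the two cofibrantly generated weak factorization systems and the standard manipulations developed in \cite{henry2019CombWMS}.
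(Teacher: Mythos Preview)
Your overall strategy is on the right track and matches the paper's approach: use the fat small object argument to propagate conditions (i)--(iii) from $\kappa$-presentable objects to arbitrary cofibrations between cofibrant objects, then use the globalized condition (iii) to build weak cylinder objects. But two points need correction.

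First, a mischaracterization: the fat small object argument is \emph{not} used to construct the (anodyne cofibration, fibration) weak factorization system. The ordinary small object argument already does that, and the resulting premodel structure needs no cofibrancy of intermediate objects. The fat small object argument enters only later, to write an arbitrary cofibration $A \cto B$ between cofibrant objects as a Reedy map of diagrams indexed by a locally $\kappa$-small well-founded poset, so that condition (iii) can be applied levelwise in the diagram category (this is the content of \cref{prop:ReedyDiag_satisfiesMainTh}) and then passed to the colimit to produce the cofibrant square, hence the weak cylinder, for $A \cto B$.

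Second, and more seriously, you have not addressed path objects. A weak model category needs relative cylinders for cofibrations \emph{and} relative path objects for fibrations between bifibrant objects. Your plan only produces the former, and these do \emph{not} follow from ``standard manipulations in \cite{henry2019CombWMS}''. The paper handles this via the dual of Proposition~2.3.2(ii) of \cite{henry2018weakmodel}: one must show that for any bifibrant $A$ and any factorization $A \overset{j}{\cto} B \overset{p}{\fto} A$ with $j$ $J$-cellular, the fibration $p$ is acyclic. This requires a further non-trivial use of the fat small object argument and the globalized conditions (ii) and (iii): one factors $p$ as $i$ followed by an anodyne fibration, builds a weak cylinder $I_A C$ for the resulting $A \cto C$ at the Reedy-diagram level, uses condition (ii) there to check that $C \coprod_A B \cto I_A C$ is $J$-Reedy, and then constructs explicit lifts exhibiting $i$ as a retract of a $J$-cellular map, whence $p$ is a retract of an anodyne fibration. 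This step is the technical heart of the proof and cannot be waved away.
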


\begin{notation}\label{notation:cofibration_cof_square} In this paper, we use $\cto$ to denote cofibrations in diagrams. A \emph{cofibrant square} is a square, denoted as

\[ \begin{tikzcd}
  A \ar[r,cto] \ar[dr,phantom,"\ulcorner"{name=M,description}]\ar[d,cto] & B \ar[d,cto] \\
B' \ar[r,cto] & C \ar[from = M,cto]
\end{tikzcd} \]

where all maps are cofibrations and the comparison map $B \coprod_A B' \to C$ is a cofibration as well. It is easy to see that cofibrant square can be composed, that is given two cofibrant squares
\[ \begin{tikzcd}
  A \ar[r,cto] \ar[dr,phantom,"\ulcorner"{name=M,description}]\ar[d,cto] & B \ar[d,cto] \ar[r,cto] \ar[dr,phantom,"\ulcorner"{name=M2,description}] & C \ar[d,cto] \\
D \ar[r,cto] & E \ar[from = M,cto] \ar[r,cto] & F \ar[from = M2,cto]
\end{tikzcd} \]
\noindent the outer rectangle is also a cofibrant square.

\end{notation}

Building on \cref{th:wms_from_small}, we will also prove a version for left semi-model categories:

\begin{theorem}\label{th:left_semi_ms_from_small} Let $\Ccal$ be a $\kappa$-combinatorial structured category, and let $J$ be a class of cofibrations between $\kappa$-presentable objects which satisfies the following conditions

\begin{enumerate}[label=(\roman*)]

\item $J$ contains isomorphisms, is stable under pushout and $\kappa$-small transfinite compositions. 

\item Given two composable cofibrations $A \overset{j}{\cto} B \overset{i}{\cto} C$ between $\kappa$-presentable cofibrant objects. If $j$ and $ i \circ j$ are in $J$, then so does $i$.

\item[(iii')] For any cofibration $i : A \cto B$ between $\kappa$-presentable cofibrant objects, there is a cofibrant square of $\kappa$-presentable objects of the form

\begin{equation}
\begin{tikzcd}
  A \ar[r,cto,"i"] \ar[dr,phantom,"\ulcorner"{name=M,description}]\ar[d,cto] & B \ar[d,cto,"\in J"description] \\
B' \ar[r,cto,"\in J"{swap}] & C \ar[from = M,cto] \ar[u,bend right=30,dotted,"r"swap]\\
\end{tikzcd}
\end{equation}

where $B \cto C$ admit a (dotted) retraction $r$.


\end{enumerate}

Then $\Ccal$ admits a Fresse left semi-model category with its given class of cofibrations, and where fibrant object and fibrations between fibrant objects are characterized by the right lifting property against $J$.

\end{theorem}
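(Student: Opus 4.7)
The plan is to bootstrap from \cref{th:wms_from_small}: since (iii') is strictly stronger than (iii), forgetting the retractions already yields a weak model structure on $\Ccal$ with the prescribed cofibrations and $J$ as generating anodyne cofibrations. The task then is to upgrade this weak model structure to a Fresse left semi-model category, which essentially amounts to two things: producing an (anodyne cofibration, fibration) factorization for every map whose source is cofibrant, and establishing the lifting property of anodyne cofibrations with cofibrant source against fibrations between fibrant objects.

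For the factorization, I would apply the fat small object argument to $J$ --- the same machinery that powers \cref{th:wms_from_small}. Given $f\colon X \to Y$ with $X$ cofibrant, this produces $X \to Z \to Y$ with $X \to Z \in \cell(J)$ and $Z \to Y$ right-lifting against $J$, so $Z \to Y$ is already a fibration in the candidate semi-model structure by hypothesis. The nontrivial content is to show that $X \to Z$ is a weak equivalence (and well-behaved enough to support the lifting property of (M4b)). This is where the retractions in (iii') enter: for any pushout of a generator $B \cto C$ along a map $B \to X'$ out of a cofibrant object, the universal property of the pushout combined with $r \circ (B \cto C) = \mathrm{id}_B$ produces a canonical retraction of the pushout leg $X' \to X' \cup_B C$. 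These retractions are stable under transfinite composition, so the entire map $X \to Z$ admits a retraction $Z \to X$ in $\Ccal$.

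Combining the retraction with the underlying weak model structure (in which each pushout of a generator is already an anodyne cofibration, hence a weak equivalence when the source is cofibrant) lets one conclude that $X \to Z$ is a weak equivalence in the required sense, completing the factorization. The remaining Fresse axioms then follow: anodyne cofibrations with cofibrant source are retracts of elements of $\cell(J)$, against which fibrations between fibrant objects lift by construction, so the retract argument supplies (M4b). The main obstacle I foresee lies in the transfinite colimit step: verifying that the retractions, the weak equivalences, and the cofibrant-source hypothesis propagate coherently to the colimit without interfering with the cofibrant-square closure conditions. For this I would invoke J.~Lurie's ``good colimits'' technology alluded to in the abstract, which is the standard tool for controlling the homotopical behaviour of highly filtered colimits of cofibrations between cofibrant objects in combinatorial settings, and which should give the required assembly.
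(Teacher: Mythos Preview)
There is a genuine gap: you have misread condition (iii'). It does \emph{not} assert that every map in $J$ admits a retraction. Rather, for each core cofibration $i\colon A\cto B$ between $\kappa$-presentable objects it produces one \emph{particular} cofibrant square in which the specific map $B\cto C$ (which happens to lie in $J$) admits a retraction. A general element of $J$ need not have any retraction at all, so your claim that ``for any pushout of a generator $B\cto C$ \dots\ the universal property of the pushout \dots\ produces a canonical retraction of the pushout leg'' fails already at the first cell. The transfinite composite $X\to Z$ built by the small object argument on $J$ therefore has no reason to admit a retraction, and your mechanism for upgrading the weak model structure collapses. (Incidentally, once the weak model structure is in place, an anodyne cofibration with cofibrant domain is already a weak equivalence, so the retraction would not have been needed for that particular step anyway; the real work lies elsewhere.)

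The paper's route is different. After obtaining the weak model structure from \cref{th:wms_from_small}, it does not attempt to retract $J$-cellular maps. Instead it propagates the entire diagram of (iii'), retraction included, to Reedy diagrams over locally $\kappa$-small well-founded posets (\cref{prop:ReedyDiag_satisfiesMainTh}), and then via the fat small object argument to arbitrary cellular maps between cellular objects. The self-composition trick of item~\ref{item:self_composed_span} in \cref{sec:wms} turns such a square into a relative weak cylinder, and the retraction is precisely what upgrades it to a \emph{strong} cylinder (\cref{prop:iii'_imply_cylinder}). Thus every cofibrant object acquires a strong cylinder object. The conclusion then follows from the characterization recalled in \cref{rk:charac_of_left_ms}: a core left saturated weak model category in which every cofibrant object has a strong cylinder is a Fresse left semi-model category. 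The core left saturation is taken explicitly in \cref{prop:left_semi_from_small_cellular}; since it leaves core fibrations untouched, the description of fibrant objects and fibrations between them via lifting against $J$ survives.
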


Versions of \cref{th:wms_from_small,th:left_semi_ms_from_small} have been first observed by the author while working on \cite{henry2020model}. In \cite{henry2020model} we construct various model structures on the category of premodel categories and $\kappa$-combinatorial premodel categories\footnote{As in item \ref{item:def_premodel} and \ref{item:def_comb_premodel} of \cref{sec:wms}.)} generalizing the one constructed by R.~Barton in the simplicial case in \cite{barton2020model}. In these model structures fibrant objects are various flavor of model categories (weak, left etc...) and equivalences between fibrant object are the Quillen equivalences. In this context, \cref{th:wms_from_small,th:left_semi_ms_from_small} appears as the characterization of fibrant objects amongst $\kappa$-combinatorial categories. Indeed, as morphisms of $\kappa$-combinatorial categories are taken to be left adjoint functor preserving $\kappa$-presentable objects, lifting property in this category only express properties of the $\kappa$-presentable objects, and so characterizing model structure as the fibrant objects is a statement very similar to these two theorems. However, the proofs we will give in \cite{henry2020model} are deeply interconnected with the construction of these model structure and will follows a much more conceptual approach. Given the importance of these results toward the proof of our main theorem, we chose to give a direct and more elementary proof of these results here.

Our argument for \cref{th:wms_from_small,th:left_semi_ms_from_small} rely in an essential way on the ``fat small object argument'' from \cite{makkai2014fat} (originally due to J.~Lurie in \cite{lurie2009higher}). Informally, the fat small objects argument will allow us to propagate the properties of the class $J$ assumed in \cref{th:left_semi_ms_from_small,th:left_semi_ms_from_small} to similar property of the class of $J$-cellular arrows, that is the maps that are transfinite composition of pushouts of arrows in $J$ ($J$-cofibrations being the retracts of theses).

In \cref{section_fat_small_object_argument} we review the fat small object argument, or rather a reformulation of it in the language of Reedy diagrams. Essentially this argument  allows to represent arbitrary $J$-cellular maps as colimits of $J$-Reedy maps between diagrams indexed by well-founded posets satisfying smallness conditions that allows for inductive reasoning. In \cref{sec:extention_to_Reedy} we show that the assumptions of \cref{th:wms_from_small} or \cref{th:left_semi_ms_from_small} can be propagated to these Reedy maps between diagrams. In \cref{sec:MS_from_small} we combine the two to show that these assumptions can be propagated from $J$ to $J$-cellular maps. Using results from \cite{henry2018weakmodel} and \cite{henry2019CombWMS} this will be enough to construct a weak model structure or a left semi-model category on $\Ccal$ and hence proving \cref{th:wms_from_small,th:left_semi_ms_from_small}. Because the fat small object argument works more naturally with cellular maps rather than with cofibrations, we will actually prove slightly more general statement that uses ``cellular categories'' (i.e. categories endowed with a class of cellular maps) instead of structured categories.

The argument for \cref{th:wms_from_small,th:left_semi_ms_from_small} from \cite{henry2020model} will also rely on the fat small object argument, but in a considerably less explicit way: it is hidden in the proof that the inclusion functor from $\lambda$-combinatorial premodel categories to $\kappa$-combinatorial premodel categories for $\lambda \leqslant \kappa$ preserves anodyne $\Wb$-fibrations. This is then used in an essential way in the proof of several key results. The author believe, though with not much conviction, that a completely explicit unfolding of the more conceptual proof we will give in \cite{henry2020model} would be in the end quite similar to the more direct and technical proof we give here. 

\bigskip

With these two theorems, we will be able to prove \cref{main_th:Quillen,main_th:Quillen_refined} in \cref{sec:localizer} using ideas somehow inspired from Low's result (i.e. \cref{th:Zhen_Lin}). As the (cofibration,anodyne fibrations) factorization system is fixed it is possible to find a cardinal $\kappa$ such that the (cofibration,anodyne fibration) factorization preserves $\kappa$-presentable objects (see \cref{rk:SOA_access_rank}). Taking $J$ to be the set of cofibrations in $\Wcal(S)$ between $\kappa$-presentable objects we will be able to show that $J$ satisfies the conditions of \cref{th:left_semi_ms_from_small} and hence generates a left semi-model structure on $\Ccal$. We then conclude that this is a Quillen model structure by directly showing that anodyne cofibrations are weak equivalences: By the fat small objects argument, arbitrary $J$-cellular maps are $\kappa$-filtered colimits of maps in $J$, which are equivalences essentially by constructions, and we will show that weak equivalences are closed under $\kappa$-filtered colimits. This essentially finish the proof of \cref{main_th:Quillen,main_th:Quillen_refined}.

Finally, in \cref{sec:left_semi_localizer}, we will sketch a version of \cref{main_th:Quillen} for left semi-model categories.

\section{Weak model structures and notations}
\label{sec:wms}

In this section, we briefly recall some terminology regarding weak model structures, mostly following \cite{henry2018weakmodel} and \cite{henry2019CombWMS}.

\begin{enumerate}

\item\label{item:def_premodel} A \emph{pre-model category} is a category with two weak factorization systems (cofibrations, anodyne fibrations) and (anodyne cofibrations, fibrations) such that anodyne cofibrations are cofibrations. We use $\cto$ and $\fto$ to denote cofibrations and fibrations and the symbol $\ano$ to denote anodyne (co)fibrations.

\item\label{item:def_comb_premodel} A pre-model category is said to be \emph{combinatorial} if its underlying category is locally presentable and its factorization systems are cofibrantly generated.

\item\label{item:def_acyclic_cof} A \emph{core fibration} is a fibration with fibrant target, (and hence fibrant domain as well). Dually, a \emph{core cofibration} is a cofibration with cofibrant domain.

\item\label{item:def_acyclic_fib} An \emph{acyclic fibration} is a fibration that has the right lifting property against core cofibrations. An \emph{acyclic cofibration} is a cofibration with the left lifting property against core fibrations. Acyclic (co)fibrations are indicated in diagram by the symbol $\sim$.

\item\label{item:def_saturation} A pre-model category is \emph{left saturated} if all acyclic cofibrations are anodyne cofibrations and \emph{right saturated} if all acyclic fibrations are anodyne fibrations. One says that a premodel category is \emph{core left saturated} if only the core acyclic cofibrations are anodyne cofibrations, and \emph{core right saturated} if only the core acyclic fibrations are anodyne fibrations. Section 4 of \cite{henry2019CombWMS} provides various universal constructions, called saturation, that turn a combinatorial pre-model category in a left/right saturated one, without affecting its ``core'', that is the fibration between fibrant objects and cofibrations between cofibrant objects, nor any homotopy theoretic property. A Quillen model category is automatically both left and right saturated.

\item \label{item:def_wms} A pre-model category is said to be a \emph{weak model category} if each cofibration $A \cto B$ between bi-fibrant objects admit a ``relative strong cylinder object'', that is a factorization of the codiagonal map
\[ B \coprod_A B \cto I_A B \to B \]
where the first map (equivalently both map) $B \cto I_A B$ is an acyclic cofibration, and if we have the dual condition for fibrations between bi-fibrant objects.

\item It was shown in \cite{henry2018weakmodel} that if $\Ccal$ is a weak model category, then we can define a homotopy relation between maps, construct a homotopy category as a category of bifibrant objects with homotopy classes of maps between them, obtain a well behaved notion of weak equivalences (at least for arrows between fibrant or cofibrant objects), show that core (co)fibration are acyclic if and only if they are weak equivalences, and more generally to develop a well behaved homotopy theory very similar to what happen with a Quillen model category.

\item\label{item:weak_cylinder} The existence of relative strong cylinder object for cofibrations between bifibrant objects mentioned above can be equivalently replaced by the existence of a \emph{relative weak cylinder object} for each core cofibration $A \cto B$, that is a diagram
\[\begin{tikzcd}
 B \coprod_A B \ar[r,cto] \ar[d,"\nabla"] & I_A B \ar[d] \\
B \ar[r,cto,"\sim"] & D_A B
\end{tikzcd}
\]
where $\nabla$ denotes the codiagonal map, and in addition the first map $B \cto I_A B$ induced by the top map is an acyclic cofibration. Strong cylinder objects corresponds exactly to the case where the map $B \cto D_A B$ is an isomorphism (or just admit a retraction).

\item\label{item:self_composed_span} Lemma~2.3.6 and~Remark 2.3.7 of \cite{henry2018weakmodel} give a trick that allows to turn a diagram as in \cref{th:wms_from_small}.$(iii)$ on the left below

\[\begin{tikzcd}
  A \ar[r,cto,"i"] \ar[dr,phantom,"\ulcorner"{name=M,description}]\ar[d,cto] & B \ar[d,cto,"\sim"]  & & \displaystyle B \coprod_A B \ar[r,cto] \ar[d] & \displaystyle C \coprod_{B'} C \ar[d] \\
B' \ar[r,cto,"\sim"{swap}] & C \ar[from = M,cto] & & B \ar[r,cto,"\sim"] & C
\end{tikzcd}
\]
into a relative weak cylinder object for $i : A \cto B$, with $I_A B= C \coprod_{B'} C$ and $D_A B = C$, as drawn on the right above.
\end{enumerate}

Semi-model category where introduced by Spitzweck in \cite{spitzweck2001operads}, following some previous work by Hovey \cite{hovey1998monoidal}. A slightly more general version of the notion has been introduced by Fresse in \cite{fresse2009modules}. The reader can also look at \cite{henry2019CombWMS} for a presentation of these various definition in relation with weak model categories. In this paper we will only use left and right ``semi-model category'' in a slightly more restricted sense than in most references, this is due to the fact that we are only interested in \emph{combinatorial} left semi-model category. We will use the following definition:

\begin{definition} A \emph{Fresse left semi-model category} is a pre-model category which admits a class $\Wcal$ of maps called weak equivalences such that

\begin{itemize}

\item $\Wcal$ satisfies $2$-out-of-$3$.

\item A fibration is an acyclic fibration if and only if it is in $\Wcal$.

\item A core cofibration is an anodyne cofibration if and only if it is in $\Wcal$.

\end{itemize}

One say it is a \emph{Spitzweck left semi-model category} if fibrations that are in $\Wcal$ are in fact anodyne fibrations.

\end{definition}

So, in such a semi-model category, we have two fully formed weak factorization systems, but anodyne cofibrations coincide with cofibrations that are equivalences only for arrows with cofibrant domain, while in a Quillen model category this would be unconditional. The more traditional definition still uses ``trivial cofibration'' to means the cofibrations that are weak equivalences, but instead weaken the requirement that (trivial cofibration, fibrations) form a weak factorization systems.

\begin{remark}\label{rk:charac_of_left_ms} Theorem 3.7 of \cite{henry2019CombWMS} shows that a pre-model category $\Ccal$ is a Fresse left semi-model category if and only if:

\begin{itemize}

\item $\Ccal$ is a weak model category.
\item $\Ccal$ is core left saturated.
\item  every cofibrant object $X$ in $\Ccal$ admits a strong cylinder object
\[X \coprod X \cto IX \to X \]
 where the map $X \cto IX$ is an acyclic cofibration.
\end{itemize}

Note that in a weak model category, this last condition automatically holds for $X$ bifibrant, but is a non-trivial assumption for a general cofibrant objects, that in general will only admits ``weak cylinder object'' in the spirit of item \ref{item:weak_cylinder} above.

And of course, it is a Spitzweck left semi-model category if it is further more right saturated.

\end{remark}

\begin{remark}\label{rk:More_general_wms} In \cite{henry2018weakmodel}, we considered a slightly more general notion of weak model category where we do not necessarily have weak factorization systems (i.e. is not a pre-model category in the first place). More precisely, we only ask to have a class of fibrations and a class of cofibrations respectively closed under composition and pullback/pushouts, and that an arrow from a cofibrant object to a fibrant object can always be factored as an ``acyclic cofibration'' followed by a fibration and as a cofibration followed by an ``acyclic fibrations'', where acyclic (co)fibration are as defined in point \ref{item:def_acyclic_cof} and \ref{item:def_acyclic_fib} above, and we then require the existence of path objects and cylinder objects as in point \ref{item:def_wms} above.

 This includes the original definition of left and right semi-model categories, which do not have fully formed weak factorizations system. In the present paper, we will use this more general notion only at one point (\cref{th:wms_from_small_cellular}), and only because we will take as ``cofibrations'' a class of cellular maps not closed under retract, as discussed below. \end{remark}

Finally, in a large part of the paper we will need to work with a class of \emph{cellular maps}, not closed under retract, instead of a class of cofibrations. The reason for this is that the fat small object argument is really about cellular maps (instead of cofibrations). To be precise:

\begin{definition}
Given $I$ a set of maps, a $I$-cellular map is a map that is a transfinite composition of pushouts of maps in $I$. A \emph{$\kappa$-combinatorial cellular category} or \emph{$\kappa$-cellular category} is a locally $\kappa$-presentable category, endowed with a class of maps, called \emph{cellular maps}, which is the class of $I$-cellular maps for $I$ a set of arrow between $\kappa$-presentable objects.
\end{definition}

A $\kappa$-cellular category is also automatically a $\kappa$-combinatorial structured category, where ``cofibrations'' are exactly the retracts of cellular maps and anodyne fibration the map with the right lifting property against all cellular maps (hence all cofibrations as well). A \emph{cellular object} is an object $X$ such that the unique map $\emptyset \to X$ is a cellular map.

\begin{remark}\label{rk:cellular_are_enough} The proofs we will give of \cref{th:wms_from_small,th:left_semi_ms_from_small} in \cref{sec:MS_from_small} apply slightly more generally when structured categories are replaced with cellular categories, and cofibration with cellular maps everywhere. \Cref{th:wms_from_small,th:left_semi_ms_from_small} corresponds to special cases when assumption of closure under retract are added.\end{remark}

\begin{notation} We define \emph{cellular squares} exactly as the cofibrant squares of \cref{notation:cofibration_cof_square}, except that all the maps involved are now required to be cellular maps instead of cofibrations. We still use the same notation $\cto$ for cellular maps and cellular square will be denoted as in \cref{notation:cofibration_cof_square}. \end{notation}

We also recall the following:

\begin{prop}\label{prop:cof_are_cellular} Let $\kappa$ be an uncountable regular cardinal, and let $\Ccal$ be a $\kappa$-combinatorial structured category. Let $I$ be the set of all cofibrations between $\kappa$-presentable objects. Then the $I$-cellular morphisms coincide with the cofibrations. 
\end{prop}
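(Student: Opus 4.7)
One inclusion is immediate: since $I \subseteq \cof$ and the class of cofibrations is closed under pushouts and transfinite compositions (being the left class of the weak factorization system), every $I$-cellular morphism is automatically a cofibration.

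For the converse, I would start with an arbitrary cofibration $f\colon A \cto B$ and appeal to the small object argument. By $\kappa$-combinatoriality of $\Ccal$, a generating set $I_0 \subseteq I$ of cofibrations between $\kappa$-presentable objects is available, and the small object argument factors $f$ as $A \xrightarrow{g} B' \xrightarrow{p} B$ where $g$ is $I_0$-cellular (in particular $I$-cellular) and $p$ is an anodyne fibration. Lifting $f$ against $p$ produces a section $s\colon B \to B'$ with $sf = g$, so $f$ becomes a retract of $g$ in the arrow category $\Ccal^\to$.

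The crux of the proof is therefore to promote this retract decomposition into a genuine $I$-cellular presentation of $f$. My plan is: starting from a cellular presentation $A = B'_0 \cto B'_1 \cto \cdots \cto B'_\lambda = B'$ of $g$ whose successor steps are pushouts of maps $i_\alpha \in I_0$, build by transfinite recursion a parallel tower $A = B_0 \cto B_1 \cto \cdots \cto B_\lambda = B$ obtained by projecting each stage through the retraction $p$. Each successor step $B_\alpha \cto B_{\alpha+1}$ should then be a pushout of a cofibration that is a retract of $i_\alpha$; since $\kappa$-presentability is preserved under retracts, such a retracted cell is again a cofibration between $\kappa$-presentable objects, and therefore belongs to $I$. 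It is essential here that $I$ is the \emph{full} class of cofibrations between $\kappa$-presentable objects rather than merely the generating set $I_0$, since this is what allows the class to absorb retracts at the level of individual cells. The main obstacle will be making the projection-through-$p$ construction precise and controlling the limit stages of the transfinite tower so that $B_\lambda$ is genuinely $B$ and each successor inclusion is an honest pushout of the intended retracted cell; uncountability of $\kappa$ enters at exactly this point, to guarantee the required compatibility between the transfinite composition and the $\kappa$-filtered colimit presentations of $B$ and of the attached cells.
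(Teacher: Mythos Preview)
The paper does not prove this proposition itself; immediately after the statement it cites Lurie (\cite{lurie2009higher}, Proposition~A.1.5.12) and Makkai--Rosick\'y--Vok\v{r}\'inek (\cite{makkai2014fat}, Theorem~B.1), and remarks that this result is ``the main reason why Lurie introduced his notion of good colimits'' --- that is, the fat small object argument reviewed in \cref{section_fat_small_object_argument}.

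Your reduction to ``$f$ is a retract of an $I$-cellular map $g$'' is correct and standard. The gap lies in the second half. The phrase ``projecting each stage through the retraction $p$'' does not describe a well-defined construction: the retraction $p\colon B'\to B$ is a single map on the colimit, and there is no evident way to extract from it compatible retractions $B'_\alpha\to B_\alpha$ at the intermediate stages of the tower. Without those, you have no candidate for $B_\alpha$, and even granting one, no reason that the successor step $B_\alpha\cto B_{\alpha+1}$ would be a pushout of a \emph{retract of $i_\alpha$} rather than of something more complicated. Retracts simply do not commute with pushouts and transfinite compositions in the way this picture requires; you are right that closure of $I$ under retracts is the key property, but it is not exploited cell-by-cell along the original tower.

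The argument that actually works (in the cited references) runs through the fat small object argument. One presents $g$ as the colimit of an $I$-Reedy morphism over a locally $\kappa$-small well-founded poset, as in \cref{lem:FatSOA}, so that by \cref{rk:S_lambda_P} the map $g$ is expressed as a $\kappa$-filtered colimit of maps lying in $I$. One then shows that the class of maps admitting such a presentation is closed under retracts; it is here that closure of $I$ under retracts and under $\kappa$-small transfinite compositions is used, and here that uncountability of $\kappa$ enters (splitting an idempotent requires an $\omega$-indexed colimit, which must remain $\kappa$-small). This is a genuinely different mechanism from the stage-by-stage projection you sketch, and I do not see a way to salvage your approach without essentially rediscovering it.
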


To put it another way, if the generating set $I$ is closed under $\kappa$-small transfinite composition, pushouts and \emph{retract}, then the class of $I$-cellular morphisms is also closed under retract. This was first proved by J.~Lurie as Proposition A.1.5.12 of \cite{lurie2009higher}. In fact this result is the main reason why Lurie introduced his notion of ``good colimits'', and what we call the fat small object argument. This results is also reproved in \cite{makkai2014fat} as Theorem~B.1.

\section{The fat small object argument}
\label{section_fat_small_object_argument}
In this section, we review the fat small object argument and the technology of ``good colimits'' introduced in Appendix A.2.6 of \cite{lurie2009higher} and presented in more details in \cite{makkai2014fat}. We present these results with a slightly different perspective that will be more convenient for the applications we have in mind, but none of the result of this section are new, they can all be found in \cite{lurie2009higher} and \cite{makkai2014fat}, eventually after some translation.

We fix $\kappa$ a regular cardinal and $\Ccal$ a $\kappa$-cellular category, with an explicitly chosen set of generators $I$. That is, $\Ccal$ is locally $\kappa$-presentable and $I$ is a set of arrows between $\kappa$-presentable objects. We will say that a map is $\kappa$-cellular (or $I$-$\kappa$-cellular) if it is a $\kappa$-small transfinite composition of pushouts of maps in $I$. Note that as $\kappa$-presentable objects are closed under $\kappa$-small colimits, the target of any $\kappa$-cellular map with $\kappa$-presentable domain is also $\kappa$-presentable. In particular, $\kappa$-cellular objects are $\kappa$-presentable.

\begin{notation} In a poset $P$ we write $x < y$ for $x \leqslant y$ and $x \neq y$. We define
\[ \partial p  \coloneqq \{ y  | y < p \}. \]
 $P$ is said to be \emph{well-founded} if it satisfies the induction principle, i.e. if given $U \subset P$, such that for all $p \in P$, $ \partial p \subset U \Rightarrow p \in U$, we have $U = P$.

For $X \subset P$ we denote:

\[ \sievec X =\{ p \in P | \exists x \in X, p \leqslant x\} \]

A subset $X \subset P$ is said to be a \emph{sieve} in $P$ if $\sievec X = X$, i.e. if $y \leqslant x$ and $x \in X$ imply $y \in X$. We write $X \sievei P$ when $X$ is a sieve in $P$. For example, for each $p \in P$ we have sieve inclusions $\partial p \sievei \sievec p \sievei P$.
A poset $P$ is considered as a category, with a unique map from $x$ to $y$ if $x \leqslant y$. We denote by $\Ccal^P$ the category of $P$-diagram in $\Ccal$, i.e. functors $P \to \Ccal$.
\end{notation}

\begin{definition}\label{def:locally_lambda_small} A well-founded poset $P$ is said to be \emph{locally $\kappa$-small} if for all $p \in P$ the sieve $\sievec p$ (or equivalently $\partial p$), has cardinality strictly smaller than $\kappa$.
 \end{definition}

In the terminology of \cite{lurie2009higher} and \cite{makkai2014fat} a ``$\kappa$-good'' poset is a locally $\kappa$-small well-founded poset with a minimal element. With our slightly modified presentation of the argument, the minimal element is no longer really important and we will work with locally $\kappa$-small well-founded posets. We also want to avoid the very generic terminology ``good''.

\begin{notation}\label{nota:latching_map} For any sieve $U \subset P$, and $\Xcal \in \Ccal^P$ a diagram, one denotes:

\[ \Xcal(U) \coloneqq \colim_{u \in U} \Xcal(u) \]

Where $U$ is seen as a full subcategory of $P$. In particular, if $p \in P$ we have%
\[ X(p) = X(\sievec p ) \qquad \Xcal(\partial p ) = \colim_{p' < p } \Xcal(p') \]

The canonical map $\Xcal(\partial p) \rightarrow \Xcal(p)$ induced by the sieve inclusion $\partial p \sievei \sievec p$, is called the \emph{latching map} of $\Xcal$ at $p$. Given an arrow $f: \Xcal \to \Ycal$ between two $P$-diagrams, the map \[ \Xcal(p) \coprod_{\Xcal(\partial p)} \Ycal(\partial p) \rightarrow \Ycal(p) \] is called the \emph{latching map} of $f$ at $p$. We have the following easy fact: \end{notation}


\begin{lemma}\label{lem:transfini_compo_of_latching} If $f:\Xcal \to \Ycal$ is a morphism of $P$-diagram, for $P$ a well founded poset. And $U \sievei V \sievei P$ are two sieves, then the natural map
\[ \Xcal(V) \coprod_{\Xcal(U)} \Ycal(U) \to \Ycal(V) \]
is a transfinite composite of pushouts of the latching map of $f$ at each $x \in V- U$. Each latching map appears exactly once in the composite.
\end{lemma}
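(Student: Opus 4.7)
The plan is to reduce to a transfinite enumeration of $V - U$ and then verify the pushout formula at successor and limit stages. The overall strategy is standard ``Reedy-style'' bookkeeping, and the well-foundedness of $P$ is what makes it work.

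First, I would extend the well-founded strict partial order on $V - U$ (inherited from $P$) to an honest well-ordering. Concretely, I build an enumeration $(x_\alpha)_{\alpha < \lambda}$ of $V - U$ by transfinite induction: at each stage $\alpha$, having already chosen $(x_\beta)_{\beta<\alpha}$, set $U_\alpha = U \cup \{x_\beta : \beta < \alpha\}$, and if $V - U_\alpha \neq \emptyset$, pick $x_\alpha$ to be any element whose strict predecessors in $P$ all lie in $U_\alpha$ (such an element exists by well-foundedness applied to the nonempty set $V - U_\alpha$, taking a minimal element). By construction every $U_\alpha$ is a sieve in $P$ containing $U$, the process terminates at some ordinal $\lambda$ with $U_\lambda = V$, and $\partial x_\alpha \subseteq U_\alpha$ for every $\alpha < \lambda$.

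Next I set
\[ Z_\alpha \coloneqq \Xcal(V) \coprod_{\Xcal(U_\alpha)} \Ycal(U_\alpha), \]
so that $Z_0 = \Xcal(V) \coprod_{\Xcal(U)} \Ycal(U)$ and $Z_\lambda = \Ycal(V)$, and the arrow in the lemma is the induced map $Z_0 \to Z_\lambda$. The successor step is the main bookkeeping task: since $\partial x_\alpha \subseteq U_\alpha$, the sieve $U_{\alpha+1}$ is the union $U_\alpha \cup \sievec x_\alpha$ with intersection $\partial x_\alpha$, hence
\[ \Xcal(U_{\alpha+1}) = \Xcal(U_\alpha) \coprod_{\Xcal(\partial x_\alpha)} \Xcal(x_\alpha), \qquad \Ycal(U_{\alpha+1}) = \Ycal(U_\alpha) \coprod_{\Ycal(\partial x_\alpha)} \Ycal(x_\alpha). \]
A pasting-of-pushouts argument then identifies $Z_{\alpha+1}$ with the pushout of $Z_\alpha$ along the latching map $\Xcal(x_\alpha) \coprod_{\Xcal(\partial x_\alpha)} \Ycal(\partial x_\alpha) \to \Ycal(x_\alpha)$ of $f$ at $x_\alpha$: this follows by writing the colimit defining $Z_{\alpha+1}$ as iterated pushouts and collapsing the two steps that insert $\Xcal(x_\alpha)$ and $\Ycal(x_\alpha)$ into a single pushout along the latching map.

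For limit ordinals $\alpha$ one has $U_\alpha = \bigcup_{\beta < \alpha} U_\beta$; since colimits commute with colimits, $\Xcal(U_\alpha)$ and $\Ycal(U_\alpha)$ are the corresponding filtered colimits and therefore $Z_\alpha = \colim_{\beta < \alpha} Z_\beta$, which is exactly the transfinite composition condition. Combining the successor and limit analyses displays $Z_0 \to Z_\lambda$ as a transfinite composite of pushouts of latching maps of $f$, one for each $x \in V - U$, each occurring exactly once, as required. The only nontrivial point is the successor step identification; once the enumeration is set up so that $\partial x_\alpha \subseteq U_\alpha$, it reduces to an elementary pushout-pasting calculation.
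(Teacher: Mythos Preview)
Your proof is correct and follows essentially the same approach as the paper: both choose a well-ordering of $V-U$ compatible with the partial order, define the intermediate objects as $\Xcal(V)\coprod_{\Xcal(U_\alpha)}\Ycal(U_\alpha)$, and verify the pushout identification at each stage. The only cosmetic difference is that the paper enumerates all of $V$ at once (with $U$ as an initial segment) and therefore also attaches a new element at limit ordinals, whereas your indexing makes the limit stages pure colimits; your convention matches the usual definition of transfinite composite more directly.
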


Two special cases of interest: if one takes $U = \emptyset$, we get that the map $\Xcal(V) \to \Ycal(V)$ is a transfinite composition of the pushouts of all the latching map of $f$ at all $v \in V$, and if we take $\Xcal$ to be constant at the initial object, then we get $\Ycal(U) \to \Ycal(V)$ is a transfinite composition of pushouts of all the latching maps of $\Ycal$ at $v \in V-U$. In both case each latching map appearing exactly once.

\begin{proof}By a classical set theoretic argument we can chose an order preserving bijection $\pi:\alpha \to V$ where $\alpha$ is an ordinal and $\beta=\pi^{-1} U$ is an initial segment of $\alpha$ (hence also an ordinal). For each $i \in \alpha, i \geqslant \beta$ we consider
\[ Y_i \coloneqq \Xcal(V) \coprod_{\Ycal(\pi(\sievec i))} \Ycal(\pi(\sievec i)) \]
which defines a functor on $\alpha - \beta$ with values in $\Ccal$. We then easily check by direct computation that:
\begin{itemize}
\item $Y_\beta = \Xcal(V) \coprod_{\Xcal(U)} \Ycal(U)$.
\item $Y_\alpha = \Ycal(V)$.
\item At a limit ordinal $\lambda$, with $\beta < \lambda < \alpha$ the map $ \colim_{\gamma < \lambda} Y(\gamma) \to Y(\lambda)$ is a pushout of the latching map of $f$ at $\pi(\lambda)$
\item A a successor ordinal $\lambda = \gamma^+$ with $\beta <\lambda < \alpha$  the map $Y_\gamma \to Y_\lambda$ is a pushout of the latching map of $f$ at $\pi(\lambda)$.
\end{itemize}
These four facts together prove the result.
\end{proof}

\begin{definition} A morphism $\Xcal \rightarrow \Ycal$ of diagrams will be called a \emph{$I$-Reedy} map if for all $p \in P$ the latching map is either an isomorphism or a pushout of a map in $I$. 
\end{definition}

In the particular case $\Xcal = \emptyset$, one says that $\Ycal$ is a $I$-Reedy diagram if $\Ycal(\partial p) \to \Ycal(p)$ is an isomorphism or a pushout of a morphism in $I$. By \cref{lem:transfini_compo_of_latching}, if $\Xcal \rightarrow \Ycal$ is a $I$-Reedy morphism then the natural map:
\[ \Xcal(P) = \colim_{p \in P} \Xcal(p) \rightarrow \Ycal(P) = \colim_{p\in P} \Ycal(p) \]
is a $I$-cellular morphism. If in addition $P$ has cardinality $<\kappa$, then the map above is $\kappa$-cellular. In particular, for any locally $\kappa$-small well-founded poset $P$, if $\Xcal$ is a $I$-Reedy diagram $P \to \Ccal$ then all the $\Xcal(p)$ and $\Xcal(\partial p)$ appearing above are $\kappa$-cellular.

\begin{notation} \label{notation:extention_of_well_founded_poset} Given well-founded posets $Q$ and $P$ with a sieve inclusion $P \sievei Q$, we say that $Q$ is an extension of $P$. If $V \sievei P$ is any sieve, there is a unique extension $P^+$ of $P$ by a single element $x$, such that $\partial x=V$.
Note that $P^+$ is locally $\kappa$-small, as in \cref{def:locally_lambda_small}, if and only if $P$ is locally $\kappa$-small and $V$ is $\kappa$-small.
\end{notation}


\begin{construction} \label{cstr:canonical_extention} If $P \sievei Q$ is a sieve inclusion, and $\Xcal$ is a $P$-diagram, we denote by $\Xcal^Q$ the (pointwise) left Kan extension of $\Xcal$ to $Q$. The formula for pointwise Kan extention boils down in this special case to:
\[ \Xcal^Q(q) = \Xcal( P \cap \sievec q ) \]

It generalizes to any sieve $U \sievei Q$ to
\[\Xcal^Q(U) = \Xcal(U \cap P) \]
And hence satisfies $\Xcal^Q(p) = \Xcal(p)$ for all $p \in P$, and more generally $\Xcal^Q(U) = \Xcal(U)$ for all $U \sievei P$. In particular, the latching map of $\Xcal^Q$ at any $q \in P$ is the latching map of $\Xcal$ at $p$, and its latching map at $q \in  Q- P$ is an isomorphism. This implies that a map $f: \Xcal \to \Ycal$ of $P$-diagram is a $I$-Reedy if and only if its Kan extension $f^Q:\Xcal^Q \to \Ycal^Q$ is $I$-Reedy.

We will very often simply write $\Xcal$ instead of $\Xcal^Q$ to denote the Kan extension and identify $P$-diagrams with their Kan extension as $Q$-diagrams. The observations above and the following easy lemma confirm that this can be done without too much harm.
\end{construction}

\begin{lemma}
For a sieve inclusion $P \sievei Q$, taking left Kan extension identifies the category $\Ccal^P$ of $P$-diagrams with the full subcategory of $Q$-diagrams whose latching maps are isomorphisms at all $q \in Q-P$. 
\end{lemma}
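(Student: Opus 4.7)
The plan is to show that left Kan extension along the sieve inclusion $P \sievei Q$ is a fully faithful functor $\Ccal^P \to \Ccal^Q$ whose essential image is exactly the described full subcategory. Both full faithfulness and one containment of the essential image claim will follow immediately from observations already recorded in \cref{cstr:canonical_extention}; only the other containment requires any actual work, and there it requires only a single well-founded induction.

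For full faithfulness, the formula $\Xcal^Q(p) = \Xcal(P \cap \sievec p) = \Xcal(\sievec p) = \Xcal(p)$ for $p \in P$ from \cref{cstr:canonical_extention} shows that the restriction functor $\Ccal^Q \to \Ccal^P$ sends $\Xcal^Q$ back to $\Xcal$, naturally in $\Xcal$. Since restriction is right adjoint to left Kan extension, this says that the unit of the adjunction is a natural isomorphism, which is equivalent to full faithfulness of $(\uvar)^Q$. One containment of the essential image claim is also already observed in \cref{cstr:canonical_extention}: for any $P$-diagram $\Xcal$, the latching map of $\Xcal^Q$ at every $q \in Q - P$ is an isomorphism.

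The only real content is the converse: given a $Q$-diagram $\Ycal$ whose latching map at each $q \in Q-P$ is an isomorphism, I must show that $\Ycal$ lies in the essential image. Setting $\Xcal \coloneqq \Ycal|_P$, I would show that the canonical counit $\alpha \colon \Xcal^Q \to \Ycal$ is an isomorphism by well-founded induction on $q \in Q$. If $q \in P$, then $\alpha_q$ is the identity by the preceding computation. If $q \in Q - P$, the inductive hypothesis gives that $\alpha$ is a natural isomorphism of diagrams on $\partial q$, so the induced map $\alpha_{\partial q}$ on the colimits defining $\Xcal^Q(\partial q)$ and $\Ycal(\partial q)$ is an isomorphism; since both latching maps $\Xcal^Q(\partial q) \to \Xcal^Q(q)$ and $\Ycal(\partial q) \to \Ycal(q)$ are isomorphisms (the former by \cref{cstr:canonical_extention}, the latter by assumption on $\Ycal$), it follows that $\alpha_q$ is an isomorphism as well. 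I do not anticipate any obstacle here beyond this bookkeeping of which latching maps are isomorphisms at which stages.
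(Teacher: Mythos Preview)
Your proof is correct and follows essentially the same approach as the paper: full faithfulness via the unit being an isomorphism, one containment from the observations in \cref{cstr:canonical_extention}, and the converse by showing the counit is an isomorphism. The only cosmetic difference is that for the converse the paper invokes \cref{lem:transfini_compo_of_latching} to see that $\Ycal(U \cap P) \to \Ycal(U)$ is a transfinite composition of pushouts of the (isomorphic) latching maps at $u \in U - P$, whereas you unroll the same content as a direct well-founded induction on $q \in Q$.
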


\begin{proof}
Kan extension is left adjoint to restrictions. The unit of adjunction is $\Xcal(p) \to \Xcal^Q(p) = \Xcal(P \cap \sievec p ) = \Xcal(p)$, and is an isomorphism, which shows that taking Kan extension is fully faithful. We have already explained above that objects in the image have isomorphic latching map at all $q \in Q-P$. Conversely, if $\Ycal \in \Ccal^Q$ satisfies this condition on latching maps, then the comparison map $\Ycal(U \cap P) \to \Ycal(U)$ is an isomorphisms because it is a transfinite composition of pushouts of the latching maps at all $u \in U - P \subset Q - P$, which are all isomorphisms. This shows that such a diagram $\Ycal$ is the Kan extension of its restriction to $P$.
\end{proof}

\begin{construction}\label{rk:S_lambda_P} If $P$ is a locally $\kappa$-small well-founded poset then $P$ is the $\kappa$-filtered union of all its $\kappa$-small sieves. More precisely, we consider the poset
 \[\Scal_\kappa P \coloneqq \{ \text{$S \sievei P$, $\kappa$-small sieves}\} \]
of sieve ordered by inclusion. For any $P$-diagram $\Xcal$ we have
\begin{equation}\label{eq:dir_colimit} \Xcal(P) \simeq \colim_{V \in \Scal_\kappa P} \Xcal(V). 
\end{equation}
As $\Scal_\kappa P$ is closed under $\kappa$-small union, the colimit above is $\kappa$-filtered. In particular, if $\Xcal$ takes values in $\Pr_\kappa \Ccal$, for example if $\Xcal$ is a $I$-Reedy diagram, then this colimit is a $\kappa$-filtered colimits of $\kappa$-presentable objects.
\end{construction}

We are now ready to present our version of the fat small object argument:

\begin{lemma} \label{lem:FatSOA} Let $I$ be any set of arrows between $\kappa$-presentable objects in locally $\kappa$-presentable category $\Ccal$. Let $P$ be a locally $\kappa$-small well-founded poset, and $\Xcal: P \rightarrow \Pr_\kappa \Ccal$ by any diagram. Then for any $I$-cellular morphism:

\[ i: \Xcal(P) \overset{i}{\cto} Y \]

There exists a locally $\kappa$-small well-founded extension $P \sievei Q$, and a $I$-Reedy morphism $\Xcal \rightarrow \Ycal$ of $Q$-diagram such that the map above identifies with $\Xcal(P) \simeq \Xcal(Q) \rightarrow \Ycal(Q)$.
\end{lemma}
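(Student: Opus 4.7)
The plan is to build $Q$ by transfinite induction, adjoining one new element for each cell of a chosen cellular decomposition of $i$. Fix a presentation of $i$ as a transfinite composition
\[ Y_0 = \Xcal(P) \to Y_1 \to \cdots \to Y_\lambda = Y \]
in which $Y_{\beta+1} = Y_\beta \coprod_{A_\beta} B_\beta$ for some $f_\beta : A_\beta \to B_\beta$ in $I$ attached along a map $A_\beta \to Y_\beta$, and $Y_\alpha = \colim_{\beta < \alpha} Y_\beta$ at limit $\alpha$. The goal is to construct simultaneously an increasing chain of locally $\kappa$-small well-founded extensions $P = Q_0 \sievei Q_1 \sievei \cdots$ equipped with diagrams $\Ycal_\alpha : Q_\alpha \to \Pr_\kappa \Ccal$ extending $\Xcal$, together with coherent identifications $\Ycal_\alpha(Q_\alpha) \simeq Y_\alpha$; then $Q = Q_\lambda$ and $\Ycal = \Ycal_\lambda$ will do the job.

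At a successor stage $\alpha = \beta+1$, \cref{rk:S_lambda_P} supplies the $\kappa$-filtered decomposition $Y_\beta \simeq \Ycal_\beta(Q_\beta) = \colim_{V \in \Scal_\kappa Q_\beta} \Ycal_\beta(V)$. Since $A_\beta$ is $\kappa$-presentable, the attaching map $A_\beta \to Y_\beta$ factors through $\Ycal_\beta(V_\beta)$ for some $\kappa$-small sieve $V_\beta \sievei Q_\beta$. Form $Q_{\beta+1}$ by adjoining a single new top element $q_\beta$ with $\partial q_\beta = V_\beta$ as in \cref{notation:extention_of_well_founded_poset}, and extend the diagram by
\[ \Ycal_{\beta+1}(q_\beta) = \Ycal_\beta(V_\beta) \coprod_{A_\beta} B_\beta, \]
so that the latching map of $\Ycal$ at $q_\beta$ is a pushout of $f_\beta$. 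At limit stages take $Q_\alpha = \bigcup_{\beta < \alpha} Q_\beta$ with the corresponding colimit of diagrams; this automatically preserves all the invariants since $\Ycal_\alpha(Q_\alpha) = \colim_\beta \Ycal_\beta(Q_\beta) = \colim_\beta Y_\beta = Y_\alpha$.

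Three items remain to verify. First, $Q$ is locally $\kappa$-small: for $p \in P$ the sieve $\partial p$ is unchanged since $P$ itself is a sieve in $Q$, while each newly added $\partial q_\beta = V_\beta$ is $\kappa$-small by choice. Second, the natural map $\Xcal^Q \to \Ycal$ into $\Ycal$ from the Kan extension of \cref{cstr:canonical_extention} is $I$-Reedy: for $p \in P$ both diagrams restrict to $\Xcal$ on $\sievec p$, so the relative latching map is an isomorphism; for $q_\beta$, the formula $\Xcal^Q(q_\beta) = \Xcal(P \cap V_\beta) = \Xcal^Q(V_\beta) = \Xcal^Q(\partial q_\beta)$ collapses the relative latching map to the absolute one $\Ycal(V_\beta) \to \Ycal(q_\beta)$, which is the pushout of $f_\beta \in I$ by construction. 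Third, the identification $\Ycal_\alpha(Q_\alpha) \simeq Y_\alpha$ propagates through the induction: at successor steps, \cref{lem:transfini_compo_of_latching} applied to $Q_\beta \sievei Q_{\beta+1}$ gives $\Ycal(Q_{\beta+1}) = \Ycal(Q_\beta) \coprod_{\Ycal(V_\beta)} \Ycal(q_\beta) = Y_\beta \coprod_{A_\beta} B_\beta = Y_{\beta+1}$, and the limit case was handled above. At $\alpha = \lambda$ this yields $\Ycal(Q) = Y$, with the map $\Xcal^Q(Q) = \Xcal(P) \to \Ycal(Q)$ agreeing with $i$ by construction. The core obstacle, which this whole setup is designed to circumvent, is exactly keeping each $\partial q_\beta$ of cardinality $<\kappa$; this is precisely where the $\kappa$-filtered decomposition from \cref{rk:S_lambda_P} together with the $\kappa$-presentability of the sources $A_\beta$ of maps in $I$ becomes indispensable.
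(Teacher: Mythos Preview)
Your proof is correct and follows essentially the same approach as the paper: both argue by transfinite induction on a cellular decomposition of $i$, at each successor step using the $\kappa$-filtered decomposition of \cref{rk:S_lambda_P} together with $\kappa$-presentability of the domain $A_\beta$ to factor the attaching map through some $\kappa$-small sieve $V_\beta$, adjoining a single new element with $\partial q_\beta = V_\beta$, and taking unions at limits. Your write-up is somewhat more explicit than the paper's in spelling out the three verifications (local $\kappa$-smallness, $I$-Reedyness of $\Xcal^Q \to \Ycal$, and the identification $\Ycal_\alpha(Q_\alpha) \simeq Y_\alpha$), but the argument is the same.
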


\begin{proof}
We start with the case where $i$ is a pushout by a single morphism $A \cto B \in I$ along a map $A \to \Xcal(P)$. By \cref{rk:S_lambda_P}, we have
a $\kappa$-filtered colimit \[ \Xcal(P) = \colim_{V \in \Scal_\kappa P} \Xcal(V). \]
As $A$ is $\kappa$-presentable, the map $A \to \Xcal(P)$ factors through $\Xcal(V)$ for some $V \in \Scal_\kappa P$. One then form $Q$ the extension of $P$ obtained by adding a single element $x$ such that $\partial x = V$, and we extend the diagram $\Xcal$ to a $Q$-diagram $\Xcal_1$ by
\[ \Xcal_1(x) = \Xcal(V) \coprod_A B \]
whose latching map at $x$ is the pushout $\Xcal(V) \cto \Xcal(V) \coprod_A B$, and whose latching map at all $p \in P$ is just that of $\Xcal$. We indeed have
\[ \Xcal_1(Q) = \Xcal_1(P) \coprod_A B \]
so that $\Xcal(P) \to \Xcal_1(Q)$ is indeed the map $i$.  If $i$ is a transfinite composite of pushout of morphism in $I$ one just iterate this process adding one element to $Q$ at each stage of the composite, and taking the increasing union of the posets $Q$ at each limit stage. 
\end{proof}

\begin{remark} Given a cellular map $X \cto Y$, one often apply the fat small object argument to it by first finding a representation of $X$ as $\Xcal(P)$ using the fat small object argument on the map $\emptyset \to X$. Note that one can do this even if $X$ is not cofibrant due to the following lemma:
\end{remark}

\begin{lemma}\label{lem:all_maps_are_cellular} If $\Ccal$ is a locally $\kappa$-presentable category and $I$ is the set of all arrows between $\kappa$-presentable objects, then every map in $\Ccal$ is $I$-cellular. In particular, any object of $\Ccal$ is the colimit of a diagram $P \to \Pr_\kappa \Ccal$ for $P$ a locally $\kappa$-small well-founded poset.
\end{lemma}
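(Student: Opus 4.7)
The plan is to apply the small object argument to $I$ and show that the right factor has to be an isomorphism. Given any $f: X \to Y$, the small object argument yields a factorization $X \xrightarrow{i} Z \xrightarrow{p} Y$ with $i$ an $I$-cellular map and $p$ having the right lifting property against every arrow of $I$. If $p$ is an isomorphism, then any transfinite cellular presentation of $i$ can be transported along $p$ by replacing its final vertex $Z$ with $Y$, exhibiting $f = p \circ i$ itself as an $I$-cellular map.

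To see that $p$ is an isomorphism, we exploit that $I$ contains \emph{all} arrows between $\kappa$-presentable objects. For every $\kappa$-presentable $A$, both $\emptyset \to A$ and the fold map $A \coprod A \to A$ lie in $I$, since $A \coprod A$ is again $\kappa$-presentable. The right lifting property against the first family says that $p_* : \Hom(A, Z) \to \Hom(A, Y)$ is surjective, while the right lifting property against the second family says it is injective. Thus $p$ induces a bijection of hom sets out of every $\kappa$-presentable object. Since $Y$ is the canonical $\kappa$-filtered colimit of its diagram of $\kappa$-presentable approximations $\Pr_\kappa \Ccal / Y \to \Ccal$, the inverses of these bijections assemble into a cocone landing in $Z$ and so produce a section $s : Y \to Z$ of $p$. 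A further application of the injectivity of $p_*$ on hom sets out of $\kappa$-presentable objects shows that $s \circ p$ and $\id_Z$ agree after precomposition with any $A \to Z$ having $A$ $\kappa$-presentable; since $Z$ is itself a colimit of such approximations, $s \circ p = \id_Z$, and hence $p$ is an isomorphism.

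For the ``in particular'' part, apply the first assertion to the map $\emptyset \to X$ to obtain that $\emptyset \to X$ is $I$-cellular, and then feed this cellular map into the fat small object argument (\cref{lem:FatSOA}) starting from the empty poset $P = \emptyset$. This produces a locally $\kappa$-small well-founded poset $Q$ together with an $I$-Reedy diagram $\Ycal : Q \to \Ccal$ satisfying $\Ycal(Q) = X$. As observed just after the definition of $I$-Reedy morphisms, all values of such a diagram are $\kappa$-cellular and hence $\kappa$-presentable, so $\Ycal$ factors through $\Pr_\kappa \Ccal$ and realizes $X$ as the desired colimit. The only genuinely delicate point in the argument is verifying that $p$ is an isomorphism, which is precisely where the hypothesis that $I$ contains \emph{every} arrow between $\kappa$-presentable objects is used.
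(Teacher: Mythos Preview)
Your proof is correct and follows essentially the same approach as the paper: run the small object argument and show the right factor $p$ is an isomorphism by exploiting that $I$ contains codiagonal-type maps, then invoke \cref{lem:FatSOA} for the second claim. The only cosmetic difference is that the paper phrases the key step as ``lifting against $\nabla_i: Y \coprod_X Y \to Y$ upgrades weak lifting to unique lifting, and unique lifting against all maps between $\kappa$-presentable objects forces an isomorphism since these are strong generators,'' whereas you spell this out concretely using the special maps $\emptyset \to A$ and $A \coprod A \to A$ and build the inverse of $p$ by hand via density.
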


\begin{proof} If $i :X \to Y$ is a map in $I$, then the codiagonal map $\nabla_i:Y \coprod_X Y \to Y$ is also in $I$. The lifting property against $\nabla_i$ corresponds exactly to the uniqueness in the lifting property against $i$. It hence follow that a map that has the weak lifting property against all maps in $I$, has actually the unique lifting property against all maps in $I$. But as $\kappa$-presentable objects are strong generators of $\Ccal$, a map with the unique lifting property against all maps between $\kappa$-presentable objects is an isomorphism. Hence, the ordinary small object argument factors any map in $\Ccal$ as an $I$-cellular map followed by an isomorphism, which proves the first claim. The second claim follows from \cref{lem:FatSOA} applied to $\emptyset \to X$ for $X$ any object of $\Ccal$ and $\emptyset$ the initial object of $\Ccal$.

\end{proof}

\section{Extension to Reedy diagram}
\label{sec:extention_to_Reedy}
\begin{assumptions}\label{ass:cellular_th_wms_from_small} In this section, we work essentially under the assumption of \cref{th:wms_from_small}, modified according to \cref{rk:cellular_are_enough}. More precisely, we fix $\Ccal$ a $\kappa$-cellular category, and we take $I$ to be the set of all cellular maps between $\kappa$-presentable objects. In particular, cellular maps are the $I$-cellular maps. We consider $J$ a subset of $I$ that satisfies the three conditions of \cref{th:wms_from_small}:

\begin{enumerate}[label=(\roman*)]

\item $J$ contains isomorphisms, is stable under pushout and $\kappa$-small transfinite composition. 

\item Let $A \overset{j}{\cto} B \overset{i}{\cto} C$ be two composable maps in $I$, with $A$ a $\kappa$-cellular object (in particular $i,j \in I$). If $j$ and $ i \circ j$ are in $J$, then so does $i$.

\item For any $i : A \cto B \in I$ with $A$ and $B$ cellular ($\kappa$-presentable) objects, there is a cellular square of $\kappa$-presentable objects of the form:

\[ 
\begin{tikzcd}
  A \ar[r,cto,"i"] \ar[dr,phantom,"\ulcorner"{name=M,description}]\ar[d,cto] & B \ar[d,cto,"\in J"] \\
B' \ar[r,cto,"\in J"{swap}] & C \ar[from = M,cto]\\
\end{tikzcd}
\]

\end{enumerate}

We will at some point, also consider the stronger condition $(iii')$ from \cref{th:left_semi_ms_from_small}, also modified according to \cref{rk:cellular_are_enough}:

\begin{enumerate}
\item[(iii')] For any $i : A \cto B \in I$ with $A$ and $B$ cellular $\kappa$-presentable objects, there is a cellular square as in condition $(iii)$ above where $B \cto C$ admits a retraction $C \to B$.
\end{enumerate}

We also fix $P$ a well-founded locally $\kappa$-small poset as in \cref{def:locally_lambda_small}. Our general goal in this section is to show that the category of functors $P \to \Pr_\kappa \Ccal$ has the same type of properties.  We first have:
\end{assumptions}

\begin{lemma}\label{lem:charac_Reedy_J_morphisms} Under \cref{ass:cellular_th_wms_from_small}, if $f:\Xcal \rightarrow \Ycal$ is a $I$-Reedy morphism of $I$-Reedy $P$-diagrams, then $f$ is a $J$-Reedy morphisms if and only if $\Xcal(p) \rightarrow \Ycal(p)$ is in $J$ for all $p \in P$.
\end{lemma}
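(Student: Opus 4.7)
The plan is to prove both directions via \cref{lem:transfini_compo_of_latching}, with condition~(ii) of \cref{ass:cellular_th_wms_from_small} doing the essential work in the non-trivial direction.

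For the forward implication, applying \cref{lem:transfini_compo_of_latching} with $U = \emptyset$ and $V = \sievec p$ expresses $\Xcal(p) \to \Ycal(p)$ as a transfinite composite of pushouts of the latching maps of $f$ at $y \leqslant p$. If $f$ is $J$-Reedy, each such latching map is either an isomorphism or in $J$, so condition~(i) on $J$, together with the fact that $\sievec p$ is $\kappa$-small, immediately gives $\Xcal(p) \to \Ycal(p) \in J$.

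For the converse I would proceed by well-founded induction on $p \in P$. Suppose the latching map of $f$ at every $q < p$ is an isomorphism or in $J$. Then \cref{lem:transfini_compo_of_latching} with $U = \emptyset$, $V = \partial p$, together with condition~(i), shows $\Xcal(\partial p) \to \Ycal(\partial p) \in J$, and its pushout $k : \Xcal(p) \to L$ with $L := \Xcal(p) \coprod_{\Xcal(\partial p)} \Ycal(\partial p)$ is therefore also in $J$. The composite of $k$ with the latching map $\ell_p : L \to \Ycal(p)$ of $f$ at $p$ is precisely the map $\Xcal(p) \to \Ycal(p)$, which lies in $J$ by hypothesis.

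To close the induction I would invoke condition~(ii) on the pair $(k, \ell_p)$. The three maps $k$, $\ell_p$, and $\ell_p \circ k$ all lie in $I$: the values of $I$-Reedy diagrams on a locally $\kappa$-small well-founded poset are $\kappa$-cellular (hence $\kappa$-presentable), so $\Xcal(p), \Ycal(p)$ and $L$ are all $\kappa$-presentable; $k$ is a pushout of a map in $I$; and $\ell_p$ is either an isomorphism or a pushout of a map in $I$ since $f$ is $I$-Reedy. Moreover $\Xcal(p)$ is itself $\kappa$-cellular by the same observation. Condition~(ii) therefore yields $\ell_p \in J$, completing the induction. The only real input is condition~(ii); everything else is routine bookkeeping with \cref{lem:transfini_compo_of_latching} and closure property~(i).
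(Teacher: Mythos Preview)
Your proof is correct and follows essentially the same route as the paper: the forward direction via \cref{lem:transfini_compo_of_latching} and closure property~(i), and the converse by well-founded induction, establishing $\Xcal(\partial p)\to\Ycal(\partial p)\in J$ from the inductive hypothesis and then invoking condition~(ii) on the factorization $\Xcal(p)\to L\to\Ycal(p)$. Your write-up is slightly more explicit than the paper's in checking the hypotheses of~(ii), but the argument is the same.
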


\begin{proof} If $f:\Xcal \to \Ycal$ is a $J$-Reedy morphism, then \cref{lem:transfini_compo_of_latching} shows that $\Xcal(p) \to \Ycal(p)$ is a transfinite composition of pushouts of all the latching map at $x \leqslant p$.  Because $J$ is closed under pushout and $\kappa$-small transfinite compositions and $P$ is locally $\kappa$-small this imply that this map is in $J$. Conversely, we assume that $\Xcal(p) \rightarrow \Ycal(p) \in J$ for all $p$, and we prove by induction on $p \in P$ that the latching map of $f$ at $p$ is in $J$. If it has been proved at all $p'<p$, then $\Xcal(\partial p) \rightarrow \Ycal(\partial p)$ is in $J$ (by the argument above restricted to $\partial p \subset P$). Hence as the map $\Xcal(p) \rightarrow \Ycal(p) \in J$ decomposes as
\[
\begin{tikzcd}
  \Xcal(\partial p) \ar[dr,phantom,"\ulcorner"very near end] \ar[r] \ar[d,cto,"\in J"] &  \Xcal(p) \ar[d,cto,"\in J"] \ar[dr,cto,"\in J"] \\
 \Ycal(\partial p)  \ar[r] & \displaystyle \Xcal(p) \coprod_{\Xcal(\partial p)} \Ycal(\partial p) \ar[r,"l \in I"{swap}] & \Ycal(p)
\end{tikzcd}
\]
where $l$ is the latching map at $p$, which is in $I$ because $f$ is a $I$-Reedy morphism, hence, as $\Xcal(p)$ is cellular (because $\Xcal$ is a $I$-Reedy diagram) point $(ii)$ of \cref{ass:cellular_th_wms_from_small} shows that the latching map is in $J$.
\end{proof}

\begin{prop}\label{prop:ReedyDiag_satisfiesMainTh} Under \cref{ass:cellular_th_wms_from_small}, the category $\Ccal^P$ of $P$-diagrams also satisfies similar conditions:

\begin{enumerate}[label=(\roman*)]

\item The class of $J$-Reedy morphisms contain isomorphisms, is stable under pushout and $\kappa$-small transfinite composition. 

\item Let $\Xcal \overset{j}{\cto} \Ycal \overset{i}{\cto} \Zcal$ be two composable $I$-Reedy morphisms with $\Xcal$ a $I$-Reedy diagram. If $j$ and $ i \circ j$ are both $J$-Reedy morphisms, then so does $i$.

\item For any $i: \Xcal \cto \Ycal$ a $I$-Reedy morphisms and $\Xcal$ a $I$-Reedy diagram, then there is a square of diagrams:

\[ 
\begin{tikzcd}
  \Xcal \ar[r,cto,"i"] \ar[dr,phantom,"\ulcorner"{name=M,description}]\ar[d,cto] & \Ycal \ar[d,cto] \\
\Ycal' \ar[r,cto] & \Zcal \ar[from = M,cto]\\
\end{tikzcd}
\]

where all maps, as well as $\Ycal \coprod_\Xcal \Ycal' \cto \Zcal$ are $I$-Reedy morphisms, and both $\Ycal \to \Zcal$ and $\Ycal' \to \Zcal$ are $J$-Reedy morphisms.
\end{enumerate}

If $\Ccal$ is further assumed to satisfies the conditions $(iii')$ of \cref{ass:cellular_th_wms_from_small}, then, in condition $(iii)$ above, we can assume that $\Ycal \to \Zcal$ admits a retraction.
\end{prop}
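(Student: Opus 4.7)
The plan is to verify each of the three conditions by combining \cref{lem:charac_Reedy_J_morphisms} with the fact that colimits and latching maps in $\Ccal^P$ are computed pointwise.

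For (i) and (ii), the arguments should be routine. For (i), isomorphisms trivially have all latching maps invertible; a pushout $f' \colon \Xcal' \to \Ycal'$ of a $J$-Reedy morphism $f \colon \Xcal \to \Ycal$ has latching at each $p$ equal to a pushout of the latching of $f$ at $p$, hence again an isomorphism or a pushout of a map in $J$; and for a $\kappa$-small transfinite composition, the latching at $p$ is the corresponding $\kappa$-small transfinite composition of latchings, which by condition (i) of \cref{ass:cellular_th_wms_from_small} is itself in $J$ (local $\kappa$-smallness of $P$ ensures all objects stay $\kappa$-presentable). For (ii), by \cref{lem:charac_Reedy_J_morphisms} it suffices to check pointwise that $\Ycal(p) \to \Zcal(p) \in J$; this follows from condition (ii) of \cref{ass:cellular_th_wms_from_small} applied to $\Xcal(p) \cto \Ycal(p) \cto \Zcal(p)$, since $\Xcal$ being $I$-Reedy makes $\Xcal(p)$ cellular. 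One also checks that $\Ycal$ and $\Zcal$ are $I$-Reedy diagrams: for instance the latching of $\Ycal$ at $p$ factors as $\Ycal(\partial p) \to \Xcal(p) \coprod_{\Xcal(\partial p)} \Ycal(\partial p) \to \Ycal(p)$, a composite of two cellular maps between $\kappa$-presentable objects, hence cellular between $\kappa$-presentable, hence in $I$.

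For (iii) I would proceed by well-founded induction on $p \in P$. Assume that $\Ycal'|_{\partial p}$ and $\Zcal|_{\partial p}$ have been constructed with all the required properties over $\partial p$. Passing to colimits over $\partial p$ and pushing the resulting cellular square out along the latching $\Xcal(\partial p) \cto \Xcal(p)$ yields a cellular square among the $\kappa$-presentable objects
\[ D := \Xcal(p) \coprod_{\Xcal(\partial p)} \Ycal(\partial p), \quad D' := \Xcal(p) \coprod_{\Xcal(\partial p)} \Ycal'(\partial p), \quad D'' := \Xcal(p) \coprod_{\Xcal(\partial p)} \Zcal(\partial p), \]
with $D \to D''$ and $D' \to D''$ in $J$ by closure of $J$ under pushout. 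The latching of $i$ at $p$ is a map $l \colon D \cto \Ycal(p)$ in $I$. I would then apply condition (iii) of \cref{ass:cellular_th_wms_from_small} to an $I$-map that already incorporates $D'$---for instance the pushout of $l$ along $D \to D \coprod_{\Xcal(p)} D'$---to produce auxiliary $\kappa$-presentable cellular objects $\tilde B, \tilde C$, and then define $\Ycal'(p)$ and $\Zcal(p)$ as suitable pushouts combining $\tilde B, \tilde C$ with $D'$ and $D''$. The main technical obstacle will be the simultaneous control of all four latching maps at $p$: the latching of $\Xcal \to \Ycal'$ must lie in $I$, those of $\Ycal \to \Zcal$ and $\Ycal' \to \Zcal$ must lie in $J$, and the latching of $\Ycal \coprod_\Xcal \Ycal' \to \Zcal$ must be cellular. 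A too-naive choice---for example applying (iii) to $l$ alone and setting $\Ycal'(p) = D'$, $\Zcal(p) = \tilde C \coprod_D D''$---makes the $\Ycal' \to \Zcal$ latching factor through a composite $D \to \tilde B \to \tilde C$ which is cellular but has no reason to lie in $J$; the working construction must invoke (iii) on a map that already ``knows about'' $D'$ so that the $\Ycal' \to \Zcal$ latching appears as a pushout of a single $J$-map.

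For the refinement under (iii$'$) of \cref{ass:cellular_th_wms_from_small}, the same inductive construction applies, and at each stage we additionally use the retraction $\tilde C \to \Ycal(p)$ provided by (iii$'$). Inductively assuming a retraction $\Zcal(\partial p) \to \Ycal(\partial p)$ already exists (whence $D'' \to D$ by pushout), the new retraction is glued together from $\tilde C \to \Ycal(p)$ and $D'' \to D \cto \Ycal(p)$ via the pushout defining $\Zcal(p)$, with compatibility on $D$ guaranteed by $D \to \tilde C \to \Ycal(p) = l$. Well-founded induction then yields a retraction $\Zcal \to \Ycal$ in $\Ccal^P$, as required.
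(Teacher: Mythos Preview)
Your treatment of (i) and (ii) is correct and matches the paper (which dismisses (i) as ``easy'' and deduces (ii) immediately from \cref{lem:charac_Reedy_J_morphisms}).

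For (iii), you have the right inductive framework and you correctly isolate the obstruction in the naive construction. However, your proposed fix---applying condition~(iii) of \cref{ass:cellular_th_wms_from_small} to the pushout of $l$ along $D \to D \coprod_{\Xcal(p)} D'$ and then forming ``suitable pushouts combining $\tilde B,\tilde C$ with $D'$ and $D''$''---is not actually carried out, and it is not clear it can be made to work: with that choice the map $\Ycal(p)\to\tilde C$ factors through the cellular (but not $J$) map $\Ycal(p)\to\Ycal(p)\coprod_{\Xcal(p)}D'$, so you again lose control of $\Ycal(p)\to\Zcal(p)$, and you have not said how $D''$ enters. The paper's construction is cleaner and avoids these difficulties: first push the latching map $l:D\cto\Ycal(p)$ out along the already-available $J$-map $D\to D''$ to obtain $W=\Ycal(p)\coprod_D D''$, so that $\Ycal(p)\to W$ is in $J$; then apply condition~(iii) of \cref{ass:cellular_th_wms_from_small} to the single composite $D'\cto D''\cto W$. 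This produces $\Ycal'(p)$ and $\Zcal(p)$ directly, and all four latching conditions at $p$ drop out: $D'\cto\Ycal'(p)$ is the left edge of the (iii)-square, $\Ycal'(p)\to\Zcal(p)$ is its bottom edge (in $J$), $\Ycal(p)\to\Zcal(p)$ is the composite of two $J$-maps $\Ycal(p)\to W\to\Zcal(p)$, and the corner map of the bottom rectangle is cellular by the (iii)-square itself. The key move you were missing is to absorb $D''$ \emph{before} invoking (iii), not after.

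Your (iii$'$) paragraph then slips back into the naive set-up you rejected: the retraction ``$\tilde C\to\Ycal(p)$'' and the formula $\Zcal(p)=\tilde C\coprod_D D''$ only make sense when (iii) was applied to $l$ alone. In the paper's construction, the inductive retraction $\Zcal(\partial p)\to\Ycal(\partial p)$ pushes out to $D''\to D$ and hence to $W\to\Ycal(p)$; condition~(iii$'$) applied to $D'\cto W$ supplies a retraction $\Zcal(p)\to W$; and the composite $\Zcal(p)\to W\to\Ycal(p)$ is the desired retraction, compatible with the one over~$\partial p$.
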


\begin{proof}
Condition $(i)$ is easy and Condition $(ii)$ follows immediately from \cref{lem:charac_Reedy_J_morphisms}.
We prove condition $(iii)$. Let $i:\Xcal \cto \Ycal$ be a $I$-Reedy morphism, with $\Xcal$ a $I$-Reedy diagram. We construct $\Ycal'$ and $\Zcal$ satisfying the requirement of condition $(iii)$ by induction on $p \in P$. We assume that $\Ycal'$ and $\Zcal'$ are already constructed for all $p' <p$. Taking the colimit over $\partial p$ in $\Ccal$ gives

\[
\begin{tikzcd}
  \Xcal(\partial p) \ar[r,cto] \ar[d,cto]  \ar[dr,phantom,"\ulcorner"{name=M,description}] & \Ycal(\partial p) \ar[d,cto,"\in J"] \\
 \Ycal'(\partial p ) \ar[r,cto,"\in J"swap] & \Zcal(\partial p) \ar[from = M,cto] 
\end{tikzcd}
\]

One can take the pushout along $\Xcal(\partial p) \rightarrow \Xcal(p)$ to get:

\[
\begin{tikzcd}
  \Xcal(p) \ar[r,cto] \ar[d,cto]  \ar[dr,phantom,"\ulcorner"{name=M,description}] &  \displaystyle \Ycal(\partial p) \coprod_{\Xcal(\partial p)} \Xcal(p) \ar[d,cto,"\in J"] \\
  \displaystyle \Ycal'(\partial p ) \coprod_{\Xcal(\partial p)} \Xcal(p) \ar[r,cto,"\in J"swap] &  \displaystyle \Zcal(\partial p) \coprod_{\Xcal(\partial p)} \Xcal(p) \ar[from = M,cto] 
\end{tikzcd}
\]

One then adds $\Ycal(p)$ to this diagram, and take a pushout:

\[
\begin{tikzcd}
  \Xcal(p) \ar[r,cto] \ar[d,cto]  \ar[dr,phantom,"\ulcorner"{name=M,description}] &  \displaystyle \Ycal(\partial p) \coprod_{\Xcal(\partial p)} \Xcal(p) \ar[dr,phantom,"\ulcorner"{very near end}] \ar[d,cto,"\in J"] \ar[r,cto] & \Ycal(p) \ar[d,cto,"\in J"] \\
  \displaystyle \Ycal'(\partial p) \coprod_{\Xcal(\partial p)} \Xcal(p) \ar[r,cto,"\in J"] &  \displaystyle \Zcal(\partial p) \coprod_{\Xcal(\partial p)} \Xcal(p) \ar[from = M,cto] \ar[r,cto] & W
\end{tikzcd}
\]

One then applies condition $(iii)$ to the bottom line of this diagram to get:

\[
\begin{tikzcd}
  \Xcal(p) \ar[r,cto] \ar[d,cto]  \ar[dr,phantom,"\ulcorner"{name=M,description}] &  \displaystyle \Ycal(\partial p) \coprod_{\Xcal(\partial p)} \Xcal(p) \ar[dr,phantom,"\ulcorner"{very near end}] \ar[d,cto,"\in J"] \ar[r,cto] & \Ycal(p) \ar[d,cto,"\in J"] \\
  \displaystyle \Ycal'(\partial p) \ar[d,cto] \coprod_{\Xcal(\partial p)} \Xcal(p) \ar[drr,phantom,"\ulcorner"{description, name = M2}] \ar[r,cto,"\in J"] &  \displaystyle \Zcal(\partial p) \coprod_{\Xcal(\partial p)} \Xcal(p) \ar[from = M,cto] \ar[r,cto] & W  \ar[d,cto,"\in J"] \\
\Ycal'(p) \ar[rr,cto,"\in J"] & & \Zcal(p) \ar[from=M2,cto]
\end{tikzcd}
\]

Where we have already named the new objects $\Ycal'(p)$ and $\Zcal(p)$, because these are the one we use to extend $\Ycal'$ and $\Zcal$ to $p$. The maps $\Ycal(p) \to \Zcal(p)$ and $\Ycal'(p) \to \Zcal(p)$ are in $J$ by construction, so it follows from \cref{lem:charac_Reedy_J_morphisms} and our induction hypothesis that $\Ycal \to \Zcal$ and $\Ycal' \to \Zcal$ are $J$-Reedy morphisms for this extension. In order to conclude one needs to check that with these definitions the morphism $\Ycal \coprod_{\Xcal} \Ycal' \rightarrow \Zcal$ is a $I$-Reedy morphism at $p$, i.e. that:

\[ \left( \left( \Ycal(p) \coprod_{\Xcal(p)} \Ycal'(p) \right) \coprod_{\left(\displaystyle  \Ycal(\partial p) \coprod_{\Xcal(\partial p)}\Ycal'(\partial p) \right)} \Zcal(\partial p) \right) \rightarrow \Zcal(p) \]

is in $I$. But this map is exactly the cellular map making the bottom rectangle a cellular square, indeed both these maps unfold to the canonical map from the colimits of the solid diagram below

\[
\begin{tikzcd}
\Xcal(\partial p) \arrow[dd] \arrow[rr] \arrow[rd] &                                          & \Ycal(\partial p) \arrow[rd] \arrow[dd] &                                      \\
                                                   & \Ycal'(\partial p) \arrow[rr,crossing over] &                                         & \Zcal(\partial p) \arrow[dd, dotted] \\
\Xcal(p) \arrow[rd] \arrow[rr]                     &                                          & \Ycal(p) \arrow[rd, dotted]             &                                      \\
                                                   & \Ycal'(p) \arrow[rr, dotted] \arrow[from=uu,crossing over]            &                                         & \Zcal(p)                            
\end{tikzcd}
\]

\noindent to $\Zcal(p)$.

If $\Ccal$ is further assumed to satisfy condition $(iii')$ of \cref{ass:cellular_th_wms_from_small}, then we also inductively assume $\Ycal(\partial p) \cto \Zcal(\partial p)$ has a retraction, and, condition $(iii')$ of \cref{ass:cellular_th_wms_from_small} gives a retraction to the morphism $W \cto \Zcal(p)$. Our two retractions can then be inserted in the diagram:

\[
\begin{tikzcd}
  \Xcal(p) \ar[r,cto] \ar[d,cto]  \ar[dr,phantom,"\ulcorner"{name=M,description}] &  \displaystyle \Ycal(\partial p) \coprod_{\Xcal(\partial p)} \Xcal(p) \ar[dr,phantom,"\ulcorner"{very near end}] \ar[d,cto] \ar[r,cto] & \Ycal(p) \ar[d,cto] \\
  \displaystyle \Ycal'(\partial p) \ar[d,cto] \coprod_{\Xcal(\partial p)} \Xcal(p) \ar[drr,phantom,"\ulcorner"{description, name = M2}] \ar[r,cto] &  \displaystyle \Zcal(\partial p) \coprod_{\Xcal(\partial p)} \Xcal(p) \ar[from = M,cto] \ar[r,cto] \ar[u,bend right=30] & W  \ar[d,cto] \ar[u,bend right=30] \\
\Ycal'(p) \ar[rr,cto] & & \Zcal(p) \ar[u,bend right=30] \ar[from=M2,cto]
\end{tikzcd}
\]

\noindent where we have used that a pushout of a map with a retraction has a (canonical) retraction as well. The composite of the two retractions of the right vertical arrows produce a retraction that is compatible with the one we started with on $\Ycal(\partial p) \cto  \Zcal(\partial p)$, which (inductively) produces a morphism in the category of diagram.
\end{proof}

\section{Model structure from small objects}
\label{sec:MS_from_small}

This section is devoted to the proof of \cref{th:wms_from_small,th:left_semi_ms_from_small}.

At this point we work under the assumption of \cref{th:wms_from_small} modified as suggested in \cref{rk:cellular_are_enough}, that is the \cref{ass:cellular_th_wms_from_small} of the previous section. We in particular have a pre-model structure structure on $\Ccal$: we call fibrations and anodyne fibrations the map with the lifting property against $J$ and $I$ respectively. The cofibrations are the retract of $I$-cellular maps and the anodyne cofibrations are the retract of $J$-cellular maps.

\begin{theorem}\label{th:wms_from_small_cellular} Let $\Ccal$ be as in \cref{ass:cellular_th_wms_from_small}. Then there is a weak model structure\footnote{In the sense of \cite{henry2018weakmodel}, see \cref{rk:More_general_wms} of the present article.} on $\Ccal$ whose ``cofibrations'' are the cellular maps and whose fibrations are the map with the right lifting property against all maps in $J$.\end{theorem}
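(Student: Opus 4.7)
The plan is to verify the axioms of a weak model structure (in the generalized sense of \cref{rk:More_general_wms}) on $\Ccal$, with cofibrations taken to be the cellular maps and fibrations taken to be the maps with the right lifting property against $J$. The technical heart of the argument is the propagation of condition $(iii)$ of \cref{ass:cellular_th_wms_from_small} from $\kappa$-presentable objects to all core cellular cofibrations.

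The underlying pre-model data is assembled directly: cellular maps are closed under pushout and transfinite composition by construction, fibrations are closed under pullback and composition as a lifting-property class, and Quillen's small object argument applied to the sets $I$ and $J$ respectively produces the two factorizations of any arrow from a cellular source to a fibrant target required by \cref{rk:More_general_wms}.

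The central claim is the following: for every cellular map $i : A \cto B$ with $A$ a cellular object, there exists a cellular square of the form
\[
\begin{tikzcd}
  A \ar[r,cto,"i"] \ar[dr,phantom,"\ulcorner"{name=M,description}]\ar[d,cto] & B \ar[d,cto,"\in J\text{-cell}"] \\
B' \ar[r,cto,"\in J\text{-cell}"{swap}] & C \ar[from = M,cto]
\end{tikzcd}
\]
in which the vertical arrow on the right and the horizontal arrow on the bottom are $J$-cellular (hence anodyne cofibrations). To construct it, first apply \cref{lem:FatSOA} to the cellular map $\emptyset \to A$ to realize $A$ as $\Ycal_0(P_0)$ for an $I$-Reedy diagram $\Ycal_0 : P_0 \to \Pr_\kappa \Ccal$ over a locally $\kappa$-small well-founded poset $P_0$. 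Then apply \cref{lem:FatSOA} a second time to $i : A \cto B$, starting from the $P_0$-diagram $\Ycal_0$: this yields an extension $P_0 \sievei P$ and an $I$-Reedy morphism $\Ycal_0^P \to \Ycal$ of $I$-Reedy $P$-diagrams whose colimit over $P$ recovers $i$. Now invoke \cref{prop:ReedyDiag_satisfiesMainTh}(iii) in the category $\Ccal^P$ of $P$-diagrams to obtain a cellular square of $P$-diagrams whose vertical and horizontal arrows into the corner are $J$-Reedy morphisms. Passing to colimits over $P$ delivers the desired cellular square in $\Ccal$: the two arrows into $C$ are colimits of $J$-Reedy morphisms, which decompose by \cref{lem:transfini_compo_of_latching} as transfinite composites of pushouts of latching maps in $J$, and are therefore $J$-cellular by \cref{ass:cellular_th_wms_from_small}(i).

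With this square available for every core cellular cofibration $i$, the trick of item \ref{item:self_composed_span} immediately produces a relative weak cylinder object for $i$, which through the reformulation recalled in item \ref{item:weak_cylinder} establishes the cylinder half of the weak model axiom of item \ref{item:def_wms}. The path object half of the axiom, and thus the promotion of all this data to a genuine weak model structure on $\Ccal$ in the sense of \cref{rk:More_general_wms}, is then a direct appeal to the recognition machinery of \cite{henry2018weakmodel}. The main obstacle is ensuring that colimits over $P$ of the diagram-level $J$-Reedy morphisms produced by \cref{prop:ReedyDiag_satisfiesMainTh} do land in the class of anodyne cofibrations in $\Ccal$: this is precisely where the closure of $J$ under pushout and $\kappa$-small transfinite composition enters, and where the ``cellularity rather than cofibrancy'' perspective of \cref{rk:cellular_are_enough} becomes essential, since we cannot rely on retract-closure of the cellular class.
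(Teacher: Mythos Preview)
Your construction of relative weak cylinder objects for core cellular cofibrations is correct and matches the paper's approach exactly. The gap lies in the final paragraph: the ``path object half'' is not a direct appeal to recognition machinery. The relevant criterion is the dual of Proposition~2.3.2(ii) of \cite{henry2018weakmodel}, whose hypothesis is that for every bifibrant object $A$ and every factorization $A \overset{j}{\cto} B \overset{p}{\twoheadrightarrow} A$ of the identity with $j$ a $J$-cellular map and $p$ a fibration, the map $p$ is an acyclic fibration. Verifying this hypothesis is in fact the most substantial part of the argument, and it is the \emph{only} place where condition $(ii)$ of \cref{ass:cellular_th_wms_from_small} (equivalently, of \cref{prop:ReedyDiag_satisfiesMainTh}) is used. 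Your proposal never invokes condition $(ii)$ at all, which is a clear sign that something is missing.

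The paper proceeds as follows. Factor $p$ further as $B \overset{i}{\cto} C \overset{q}{\twoheadrightarrow} A$ with $i$ an $I$-cellular map and $q$ an anodyne fibration, and lift the composable pair $A \overset{\in J}{\cto} B \cto C$ to Reedy morphisms $\Acal \cto \Bcal \cto \Ccal$ of $P$-diagrams via three applications of \cref{lem:FatSOA}. Construct a relative weak cylinder $I_\Acal \Ccal$ for $\Acal \cto \Ccal$ at the diagram level using \cref{prop:ReedyDiag_satisfiesMainTh}(iii). The crucial extra step is that the composite $\Ccal \coprod_{\Acal} \Bcal \cto \Ccal \coprod_{\Acal} \Ccal \cto I_{\Acal} \Ccal$ is $J$-Reedy, which follows from \cref{prop:ReedyDiag_satisfiesMainTh}(ii) applied to $\Ccal \cto \Ccal \coprod_{\Acal} \Bcal \cto I_{\Acal} \Ccal$. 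Passing to colimits gives a relative weak cylinder $I_A C$ in $\Ccal$ such that $C \coprod_A B \cto I_A C$ is $J$-cellular. Two successive lifting problems against $p$ and against $q$ then produce maps which fit into a retract diagram exhibiting $i : B \cto C$ as a retract of a $J$-cellular map $C \cto I_B C$; the usual retract lemma shows $p$ is a retract of $q$, hence an anodyne fibration.
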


\begin{proof} By the ordinary small object argument any map can be factored as a $I$-cellular map followed by an anodyne, hence acyclic, fibration. Similarly, any arrow can be factored as a $J$-cellular map followed by a fibration and $J$-cellular maps are acyclic cofibrations. This shows that $\Ccal$ satisfies the factorization axioms of Definition 2.1.10 of \cite{henry2018weakmodel}. It only remains to show the existence of relative cylinder and path objects.

We then show that a $I$-cellular map $i: A\cto B$ between $I$-cellular objects has a weak relative cylinder object. By \cref{lem:FatSOA}, $A = \Acal(P)$ for $P$ a locally $\kappa$-small well-founded diagram and $\Acal: P \to \Ccal$ a $I$-Reedy diagram, and (applying \ref{lem:FatSOA} again) $B = \Bcal(Q)$ for $Q$ an extension of $P$, also locally $\kappa$-small and $\Acal$ is the restriction of $\Bcal$ to $P$.

Now, by point $(iii)$ of \cref{prop:ReedyDiag_satisfiesMainTh} we have in the category of $Q$-diagram a square:

\[ 
\begin{tikzcd}
  \Acal \ar[r,cto,"i"] \ar[dr,phantom,"\ulcorner"{name=M,description}]\ar[d,cto] & \Bcal \ar[d,cto] \\
\Bcal' \ar[r,cto] & \Ccal \ar[from = M,cto]\\
\end{tikzcd}
\]

Where all maps (including $\Bcal' \coprod_\Acal \Bcal \to \Ccal$) are $I$-Reedy maps, and the two maps $\Bcal \to \Ccal$ and $\Bcal' \to \Ccal$ are $J$-Reedy. Taking the colimits on $Q$, gives a similar diagram in $\Ccal$

\[ 
\begin{tikzcd}
  A \ar[r,cto,"i"] \ar[dr,phantom,"\ulcorner"{name=M,description}]\ar[d,cto] & B \ar[d,cto,"\in J\text{-cell}"] \\
B' \ar[r,cto,"\in J\text{-cell}"swap] & C \ar[from = M,cto]\\
\end{tikzcd}
\]

By point \ref{item:self_composed_span} in \cref{sec:wms}, this shows that $A \cto B$ admits a relative weak cylinder object, given by $B \coprod_A B \cto C \coprod_B' C \to C$.

To conclude that $\Ccal$ is a weak model category, we will use the dual of Proposition~2.3.2.(ii) of \cite{henry2018weakmodel}, and show that for $A$ a bifibrant object in $\Ccal$, if we have a factorization $A \overset{j}{\hookrightarrow} B \overset{p}{\twoheadrightarrow} A$ with $j$ a $J$-cellular map and $p$ a fibration, then $p$ is an acyclic fibration. By proposition~2.3.2 of \cite{henry2018weakmodel}, this will imply the existence of relative path objects for every fibrations and concludes the proof of the existence of the weak model structure.

In order to do that, one considers a further factorization:
\[\begin{tikzcd}
A \ar[r,cto,"j","\in J"{swap}] & B \ar[dr,cto,"i"] \ar[rr,two heads,"p"] & & A\\
& & C \ar[ur,->>,"q","\sim"{swap}]  
\end{tikzcd}\]
where $i$ is $I$-cellular. The maps $A \overset{\in J}{\cto} B \cto C$ can be lifted as above, using three times lemma \ref{lem:FatSOA}, to maps of $P$-diagram, for $P$ a locally $\kappa$-small well-founded poset,

\[ \emptyset  \overset{I\text{-Reedy}}{\cto} \Acal \overset{J\text{-Reedy}}{\cto} \Bcal \overset{I\text{-Reedy}}{\cto} \Ccal \]

By condition $(iii)$ of \cref{prop:ReedyDiag_satisfiesMainTh}, combined with point \ref{item:self_composed_span} of \cref{sec:wms}, one can construct a relative weak cylinder object for $\Acal \cto \Ccal$ in the category of Reedy diagram. By that we mean a diagram%
\[
\begin{tikzcd}
\displaystyle  \Ccal \coprod_\Acal \Ccal \ar[d] \ar[r,cto] & I_\Acal \Ccal \ar[d] \\
\Ccal \ar[r,cto] & D_\Acal \Ccal
\end{tikzcd}
\]
where both maps $\Ccal \cto I_\Acal \Ccal$ and the map $\Ccal \to D_\Acal \Ccal$ are $J$-Reedy, $\Ccal \coprod_\Acal \Ccal \cto I_\Acal \Ccal$ is $I$-Reedy. By condition $(ii)$ of \ref{prop:ReedyDiag_satisfiesMainTh} the composite morphisms%
\[
\begin{tikzcd}
 \Ccal \coprod_{\Acal} \Bcal \ar[r,cto] & \Ccal \coprod_{\Acal} \Ccal \ar[r,cto] & I_{\Acal} \Ccal
\end{tikzcd}
\]%
is a $J$-Reedy morphism, because the maps $\Ccal \cto  \Ccal \coprod_{\Acal} \Bcal $ and $\Ccal \cto  I_{\Acal} \Ccal$ are both $J$-Reedy. Taking the colimits of these in $\Ccal$ one obtains a relative weak cylinder $I_{A} C$ in $\Ccal$ with the property that $C \coprod_A B \cto I_A C$ is $J$-cellular. One then first form a lift in the square:

\[
\begin{tikzcd}
C \coprod_A B \ar[r,"(jq\,Id)"] \ar[d,cto,"J\text{-cell}"swap] & B \ar[d,->>,"p"] \\
I_A C \ar[ur,dotted,"h"] \ar[r,"r_q"] & A 
\end{tikzcd}
\]

Where $r_q$ denotes any self homotopy of $q:C \rightarrow A$ (these exists by using the lifting property of $C \overset{J\text{-cell}}{\cto} D_A C$ against $A$). One denotes by $h_1$ the composite of $h$, with second map $C \rightarrow I_A C$ (i.e. not the one used in the left vertical arrow above). By construction one has $h_1 \circ i =Id_B$. One then form a lifting in the square:

\[
\begin{tikzcd}
  C \coprod_B C \ar[d,cto] \ar[r,"(i h_1 \, Id)"]& C \ar[d,->>,"\sim","q"{swap}] \\
Id_B C \ar[r,"r_q"] \ar[ur,dotted,"\eta"] & A \\
\end{tikzcd}
\]

Finally these two maps $h_1$ and $\eta$ fits together in a retract diagram:

\[
\begin{tikzcd}
B \ar[d,cto,"i"] \ar[rr, bend left=30,equal] \ar[r,cto,"i"] & C \ar[r,"h_1"] \ar[d,cto,"J\text{-cell}","e_0"{swap}] & B \ar[d,cto,"i"] \\
C \ar[r,cto,"e_1"] \ar[rr, bend right=30,equal] & Id_B C \ar[r,"\eta"] & C \\
\end{tikzcd}
\]

This shows that $i:B \cto C$ is a retract of a $J$-cellular morphism, in particular it has the left lifting property against $p$, hence by the usual retract lemma, $p$ is a retract of $q$, hence is an anodyne fibration which concludes the proof. \end{proof}

Now the restrictions in \cref{th:wms_from_small_cellular} that the ``cofibrations'' of the weak model structure are only the $I$-cellular maps and not all the actual cofibrations is easily lifted:

\begin{cor}\label{cor:wms_from_small_2} Let $\Ccal$ be as in \cref{ass:cellular_th_wms_from_small}. Then its induced pre-model structure is a weak model structure. This weak model structure is Quillen equivalent, and has the same weak equivalences when it makes sense, as the one from \cref{th:wms_from_small_cellular}.
\end{cor}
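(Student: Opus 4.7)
The plan is to observe that the two structures differ only in the class of cofibrations: the pre-model structure takes cofibrations to be retracts of cellular maps, while \cref{th:wms_from_small_cellular} takes them to be exactly the cellular maps. All other data --- fibrations, anodyne fibrations, anodyne cofibrations, fibrant objects, and bifibrant objects --- are determined by lifting properties against $I$ and $J$ and hence coincide in the two structures. In particular, since fibrations between bifibrant objects are the same, the relative path objects constructed in \cref{th:wms_from_small_cellular} already witness the path object axiom for the pre-model structure, and the only new content needed is the construction of relative cylinder objects for arbitrary cofibrations between bifibrant objects.

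For a cofibration $A \cto B$ between bifibrant objects in the pre-model structure, I would apply the small object argument on $I$ to factor it as $A \overset{i}{\cto} B' \overset{p}{\fto} B$ with $i$ cellular and $p$ an anodyne fibration, and note that $p$ admits a section $s:B\to B'$ over $A$ because $A\cto B$ is a cofibration; the intermediate object $B'$ is bifibrant ($B'$ is cofibrant since $A$ is and $A\cto B'$ is a cofibration, and fibrant since $B$ is and $p$ is an anodyne fibration). By \cref{th:wms_from_small_cellular}, $A\cto B'$ admits a relative weak cylinder $B'\coprod_A B' \cto I_A B' \to D_A B' \leftarrow B'$, constructed from a cofibrant square of arrows in $J$ via the self-composed span of item \ref{item:self_composed_span}. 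I would then push this cylinder forward along $p$ by setting
\[
I_A B \;:=\; I_A B' \coprod_{B' \coprod_A B'} (B \coprod_A B), \qquad D_A B \;:=\; D_A B' \coprod_{B'} B.
\]
The map $B \cto D_A B$ is a pushout of the acyclic cofibration $B' \cto D_A B'$ along $p$, hence acyclic (pushouts of cofibrations with left lifting against core fibrations enjoy the same lifting property). The map $B \coprod_A B \cto I_A B$ is a pushout of the cellular cofibration $B' \coprod_A B' \cto I_A B'$, hence cellular. The key verification is that the individually induced inclusion $B \cto I_A B$ is an acyclic cofibration: unwinding the self-composed span construction $I_A B' \simeq C \coprod_{B''} C$ from a cofibrant square in $J$ identifies this inclusion, after pushout along $p$, as a composite of pushouts of arrows in $J$, hence as $J$-cellular and in particular an acyclic cofibration.

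The Quillen equivalence with the weak model structure of \cref{th:wms_from_small_cellular} will follow from the identity adjunction on $\Ccal$: cellular maps are cofibrations and the two structures share fibrations, so the identity is a Quillen adjunction, and it is a Quillen equivalence because both structures share their bifibrant objects and produce the same cylinders and path objects on them, so induce the same homotopy category and the same notion of weak equivalence between objects where it makes sense. The main technical obstacle I anticipate is confirming that the individually induced inclusion $B \cto I_A B$ is a genuine acyclic cofibration (and not merely a weak equivalence), which requires carefully tracking how the self-composed span construction of $I_A B'$ behaves under the pushout along $p \coprod_A p$ and appealing to the stability of $J$-cellular maps under pushout.
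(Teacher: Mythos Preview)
Your approach has a genuine gap. You invoke \cref{th:wms_from_small_cellular} to obtain a relative weak cylinder for $A \cto B'$, but the construction in that proof (via the fat small object argument and \cref{prop:ReedyDiag_satisfiesMainTh}) applies to $I$-cellular maps between $I$-\emph{cellular} objects: one first writes $A = \Acal(P)$ for an $I$-Reedy diagram $\Acal$, which requires $\emptyset \to A$ to be $I$-cellular. In the induced pre-model structure, a bifibrant $A$ is only known to be a \emph{retract} of a cellular object, not cellular itself, so the cylinder construction does not directly apply. For the same reason your claim that ``both structures share their bifibrant objects'' is not correct: the pre-model structure may have strictly more cofibrant (hence bifibrant) objects. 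One could try to repair this by first passing to a cellular replacement of $A$, but transporting the relative cylinder back along an anodyne fibration on the \emph{domain} is considerably more delicate than the codomain pushforward you describe, and you would still owe the verification you flag as the ``main technical obstacle''.

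The paper sidesteps all of this by working entirely on the fibration side, which is genuinely unchanged between the two structures. Since the pre-model structure has the same fibrations and the same anodyne (equivalently, acyclic) fibrations as the cellular weak model structure, every core fibration already has a relative path object and acyclic fibrations satisfy $2$-out-of-$3$ amongst fibrations; Proposition~2.3.3 of \cite{henry2018weakmodel} then gives the weak model structure directly, with no new cylinder construction needed. The coincidence of weak equivalences likewise follows from having the same fibrations and acyclic fibrations (hence the same equivalences between fibrant objects) together with shared fibrant replacements.
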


\Cref{th:wms_from_small} corresponds exactly to the special case of this corollary where the class $I$ is also closed under retract in the first place.

\begin{proof}
This puts on $\Ccal$ the same notion of fibrations and acyclic fibration as the weak model structure from \ref{th:wms_from_small_cellular}. It follows that every fibration between fibrant objects has a relative path object and that acyclic fibrations satisfies $2$-out-of-$3$ amongst fibrations. This is enough to get a weak model structure on $\Ccal$ by Proposition~2.3.3 of \cite{henry2018weakmodel}.

Both model structure have the same fibrations and acyclic fibrations, so they have the same equivalence between fibrant objects. Fibrant replacement for the model structure of \ref{th:wms_from_small_cellular} are still fibrant replacement for the present model structure, and so an arrow between object that are fibrant or cofibrant in the model structure of \ref{th:wms_from_small_cellular} is an equivalence if and only if it is an equivalence in the sense of the present model structure. \end{proof}

We next move to the proof of \cref{th:left_semi_ms_from_small}

\begin{prop}\label{prop:iii'_imply_cylinder} Assume that $\Ccal$ satisfies \cref{ass:cellular_th_wms_from_small}, including condition $(iii')$, then:

\begin{itemize}

\item Every cofibrant objects of the weak model structures of \cref{th:wms_from_small_cellular} or \cref{cor:wms_from_small_2} have a strong cylinder object, i.e. a factorization of the codiagonal $X \coprod X \cto IX \overset{\sim}{\to} X$ in a cofibration followed by an equivalence.

\item Every anodyne fibration between objects that are fibrants or cofibrants is a weak equivalence.

\end{itemize}
\end{prop}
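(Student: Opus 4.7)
The plan is to leverage condition $(iii')$ of \cref{ass:cellular_th_wms_from_small} through \cref{prop:ReedyDiag_satisfiesMainTh}, in close analogy with the proof of \cref{th:wms_from_small_cellular}, to construct strong cylinders for cellular objects and then propagate to all cofibrant objects.

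For the first bullet with $X$ cellular: apply \cref{lem:FatSOA} to $\emptyset \cto X$ to realize $X$ as the colimit $\Xcal(P)$ of an $I$-Reedy diagram $\Xcal \colon P \to \Ccal$, then apply \cref{prop:ReedyDiag_satisfiesMainTh}$(iii')$ to the $I$-Reedy morphism $\emptyset \cto \Xcal$. This produces a cellular square of $P$-diagrams whose colimit in $\Ccal$ is a cellular square
\[
\begin{tikzcd}
\emptyset \ar[r,cto] \ar[d,cto] \ar[dr,phantom,"\ulcorner"{name=M,description}] & X \ar[d,cto] \\
X' \ar[r,cto] & Z \ar[from=M,cto]
\end{tikzcd}
\]
in which both $X \cto Z$ and $X' \cto Z$ are $J$-cellular (hence anodyne cofibrations), and $X \cto Z$ admits a retraction $r \colon Z \to X$. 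Item \ref{item:self_composed_span} of \cref{sec:wms} then turns this square into a weak relative cylinder $X \coprod X \cto Z \coprod_{X'} Z \to Z$, and composing its endpoint map with $r$ produces the strong cylinder $X \coprod X \cto Z \coprod_{X'} Z \overset{\sim}{\to} X$: the map $Z \coprod_{X'} Z \to Z$ is an equivalence by 2-out-of-3 applied to the pushout inclusions of the anodyne cofibration $X' \cto Z$, and $r$ is an equivalence by 2-out-of-3 applied to $X \cto Z \overset{r}{\to} X = \mathrm{id}_X$. For a general cofibrant $X$, one expresses $X$ as a retract of a cellular $\tilde X$ and transfers the strong cylinder on $\tilde X$ to one on $X$ by a pushout along the retraction, using that pushouts of $J$-cellular morphisms are $J$-cellular and therefore anodyne cofibrations.

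For the second bullet, let $p \colon Y \twoheadrightarrow X$ be an anodyne fibration with $Y$ and $X$ each fibrant or cofibrant. If both are fibrant, then $p$ is a core fibration having the right lifting property against all core cofibrations, hence an acyclic core fibration and a weak equivalence by point 7 of \cref{sec:wms}. If both are cofibrant, lifting $\emptyset \cto X$ against $p$ produces a section $s \colon X \to Y$ with $ps = \mathrm{id}_X$; given a strong cylinder $Y \coprod Y \cto IY \overset{\sim}{\to} Y$ from the first bullet, the square
\[
\begin{tikzcd}
Y \coprod Y \ar[r,"{(\mathrm{id}_Y,\, sp)}"] \ar[d,cto] & Y \ar[d,->>,"p"] \\
IY \ar[r] & X
\end{tikzcd}
\]
commutes (bottom map: cylinder equivalence followed by $p$), and lifting it against the anodyne fibration $p$ yields a homotopy $\mathrm{id}_Y \sim sp$. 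This exhibits $p$ as a homotopy equivalence between cofibrant objects, hence a weak equivalence; the mixed fibrancy cases reduce to these two via standard replacement.

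The main technical point is the first bullet: the strength of $(iii')$ over $(iii)$ is precisely that the retraction survives passage through \cref{prop:ReedyDiag_satisfiesMainTh} and colimits, turning the weak cylinder produced by the argument of \cref{th:wms_from_small_cellular} into a strong one, while the retract-transfer step requires some care to verify that the resulting pushout really is a strong cylinder.
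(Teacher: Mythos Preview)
Your treatment of the first bullet for \emph{cellular} $X$ is correct and matches the paper's argument. The gap is in the ``retract-transfer'' step for a general cofibrant $X$. If $X$ is a retract of a cellular $\tilde X$ via $i,r$ with $ri=\mathrm{id}$, and you set $IX \coloneqq (X\coprod X)\coprod_{\tilde X\coprod\tilde X} I\tilde X$, then $X\coprod X \cto IX$ is indeed a cofibration; but the inclusion $X \to IX$ is \emph{not} a pushout of the $J$-cellular map $\tilde X \to I\tilde X$ along $r$. The square
\[
\begin{tikzcd}
\tilde X \ar[r] \ar[d,"r"'] & \tilde X\coprod\tilde X \ar[d,"r\coprod r"] \\
X \ar[r] & X\coprod X
\end{tikzcd}
\]
is not a pushout, so pasting it with the defining pushout for $IX$ does not exhibit $X\to IX$ as a pushout of $\tilde X\to I\tilde X$. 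Nor can one exhibit $X\to IX$ as a retract of $\tilde X\to I\tilde X$: the required map $IX\to I\tilde X$ would force $ir=\mathrm{id}_{\tilde X}$. So there is no evident reason why $X\to IX$ should be an anodyne cofibration or an equivalence, and your acknowledgement that this step ``requires some care'' is an understatement. Your second-bullet argument in the ``both cofibrant'' case then inherits this gap, since it invokes a strong cylinder on the merely cofibrant domain $Y$.

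The paper breaks this circularity by reversing the order of dependencies. After constructing strong relative cylinders for cellular maps between cellular objects (as you do), it first proves that any anodyne fibration $p\colon X\overset{\ano}{\twoheadrightarrow} Y$ with $X$ \emph{cellular} and $Y$ cofibrant is an equivalence, using only the cylinder on the cellular $X$ (section plus homotopy, exactly your argument). It then bootstraps to anodyne fibrations between arbitrary cofibrant or fibrant objects by precomposing with an anodyne fibration from a cellular replacement of the domain and applying $2$-out-of-$3$. Only then does it return to the first bullet for a general cofibrant $X$: one simply takes a (cofibration, anodyne fibration) factorization $X\coprod X \cto IX \overset{\ano}{\twoheadrightarrow} X$, and the second map is an equivalence by the just-established second bullet. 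No retract-transfer is needed.
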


\begin{proof} In the proof of \cref{th:wms_from_small_cellular}, if we further assume condition $(iii')$, then for each cellular map $A \cto B$ between cellular objects in $\Ccal$ we have a diagram%
\[ \begin{tikzcd}
  A \ar[r,cto] \ar[dr,phantom,"\ulcorner"{name=M,description}]\ar[d,cto] & B \ar[d,cto,"J\text{-cell}"] \\
B' \ar[r,cto,"J\text{-cell}"swap] & C \ar[from = M,cto] \ar[u,bend left=30,dotted,"r"]
\end{tikzcd} \]
with a retraction $r:C \to B$ of the map $B \cto C$. It follows that the weak relative cylinder
\[
\begin{tikzcd}
\displaystyle B \coprod_A B \ar[r,cto] \ar[d] & \displaystyle I_A B = C \coprod_{B'} C \ar[d]  \\
B \ar[r] & C \ar[l,bend left=30,"r"]
\end{tikzcd}
\]
constructed by point \ref{item:self_composed_span} of \cref{sec:wms}, is actually a relative strong cylinder.

This shows the first point for cellular objects, i.e. for the weak model structure of \cref{th:wms_from_small_cellular}. This is enough to show that every acyclic fibration $p: X \overset{\approx}{\twoheadrightarrow} Y$ from a cellular object to a cofibrant object is a weak equivalence. Indeed such a $p$ has a section $s:Y \to X$ because $Y$ is cofibrant, and a lifting in the square:
\[\begin{tikzcd}
 X \coprod X \ar[d,cto] \ar[rr,"(sp{,}\text{Id})"] & & X \ar[d,two heads,"p"] \\
I X \ar[r] \ar[urr,dotted] & X \ar[r,"p"] & Y
\end{tikzcd}
\]
produces an homotopy between $sp$ and the identity, hence showing that $s$ and $p$ are inverse of each other in the homotopy category.

Acyclic fibrations with fibrant targets are equivalences because $\Ccal$ is a weak model category (see for example Proposition 2.2.3 of \cite{henry2018weakmodel}). Any acyclic cofibration either between cofibrant objects or from a fibrant object to a cofibrant object is also an equivalence: indeed in both case we can precompose it with an acyclic fibrations $p$ with cellular domain, both $p$ and the composite are anodyne fibration with cellular domain and either fibrant or cofibrant targets, so are equivalences by the two observations above. Hence by $2$-out-of-$3$ the anodyne fibration we started from is also an equivalence. This concludes the proof of the second point.

The first point for general cofibrant objects follows by considering a (cofibration,anodyne fibration) factorization $X \coprod X \cto IX \overset{\ano}{\twoheadrightarrow} X$. The two maps $X \to IX$ are automatically acyclic cofibrations by $2$-out-of-$3$ for weak equivalences.
\end{proof}

\begin{prop}\label{prop:left_semi_from_small_cellular} Let $\Ccal$ be a $\lambda$-cellular category with a set $J$ of maps satisfying all the conditions of \cref{ass:cellular_th_wms_from_small}, including $(iii')$. Then the core left saturation\footnote{In the sense of section 4 of \cite{henry2019CombWMS}.} of the pre-model structure on $\Ccal$ generated by $I$ and $J$ is a Fresse left semi-model category.

It has the same cofibrations as $\Ccal$ and its fibrant objects and fibrations between fibrant objects are characterized as in \cref{th:left_semi_ms_from_small} by the lifting property against all maps in $J$.
\end{prop}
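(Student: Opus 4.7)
The plan is to verify the three characterizing conditions of \cref{rk:charac_of_left_ms} for the pre-model category $\widetilde{\Ccal}$ obtained as the core left saturation of the pre-model structure on $\Ccal$ generated by $I$ and $J$: that $\widetilde{\Ccal}$ is a weak model category, that it is core left saturated, and that every cofibrant object of $\widetilde{\Ccal}$ admits a strong cylinder object whose structural map is an acyclic cofibration. Invoking that remark will then yield the Fresse left semi-model structure.

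First I would invoke \cref{cor:wms_from_small_2} to conclude that the pre-model structure on $\Ccal$ generated by $I$ and $J$ is already a weak model category. I would then apply the core left saturation construction of Section~4 of \cite{henry2019CombWMS} to it. By the properties of this construction recalled in item~\ref{item:def_saturation} of \cref{sec:wms}, $\widetilde{\Ccal}$ has the same underlying category, the same cofibrations, the same core (fibrations between fibrant objects and cofibrations between cofibrant objects), and the same homotopy-theoretic invariants as the pre-model category I started with; in particular it remains a weak model category, it is core left saturated by construction, and its fibrant objects and fibrations between fibrant objects are still the ones characterized by the right lifting property against $J$. This handles the first two conditions of \cref{rk:charac_of_left_ms} and also establishes the last sentence of the proposition.

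For the third condition, I would use \cref{prop:iii'_imply_cylinder}, which, under the additional assumption $(iii')$, provides a strong cylinder object $X \coprod X \cto IX \overset{\sim}{\to} X$ for every cofibrant $X$ in the underlying weak model structure. Each of the two structural maps $X \to IX$ is a cofibration (being the composite of the pushout $X \cto X \coprod X$ of $\emptyset \cto X$ with the cofibration $X \coprod X \cto IX$) and is a weak equivalence by $2$-out-of-$3$ applied to the identity factorization $X \to IX \overset{\sim}{\to} X$. It is therefore a core acyclic cofibration, and since $\widetilde{\Ccal}$ is core left saturated it is automatically an anodyne cofibration there. This provides the strong cylinder object required by the third bullet of \cref{rk:charac_of_left_ms}, and applying that remark concludes the argument. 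The only potentially delicate point is to appeal correctly to the preservation properties of the core left saturation, in particular to the fact that it preserves the weak model category structure and does not disturb the core; but this is exactly what the saturation construction of Section~4 of \cite{henry2019CombWMS} is designed to deliver.
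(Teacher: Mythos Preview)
Your proof is correct and follows essentially the same route as the paper: establish that the pre-model structure generated by $I$ and $J$ is a weak model category via \cref{cor:wms_from_small_2}, invoke \cref{prop:iii'_imply_cylinder} for strong cylinders on cofibrant objects, pass to the core left saturation (which preserves the core and hence both of these properties), and then apply \cref{rk:charac_of_left_ms}. Your version is slightly more explicit than the paper in justifying why $X \cto IX$ is an acyclic cofibration, and the extra step promoting it to an anodyne cofibration via core left saturation is harmless but unnecessary, since \cref{rk:charac_of_left_ms} only asks for an acyclic cofibration.
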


Here again, \cref{th:left_semi_ms_from_small} corresponds to the special case where the class $I$ is assumed to be closed under retract.

\begin{proof} As observed above, under these assumptions $\Ccal$ with the factorization system generated by its cellular maps and by $J$ is a combinatorial pre-model category which:

\begin{itemize}

\item Is a weak model category by \cref{th:wms_from_small_cellular} and \cref{cor:wms_from_small_2}.

\item In which every cofibrant object has a strong cylinder object by \cref{prop:iii'_imply_cylinder}.
  
\end{itemize}

It follows from Section 4 of \cite{henry2019CombWMS} that its core left saturation exists (because it is combinatorial). It has the same cofibrations and core fibrations, so its fibrant objects and core fibrations are indeed described by the lifting property against $J$, and the two conditions above are immediately still satisfied as they only involve core cofibrations and fibrations. It hence follows by \cref{rk:charac_of_left_ms} (i.e. Theorem~3.7 of \cite{henry2019CombWMS}) that it is a Fresse left semi-model category.
\end{proof}

\section{Model structure from localizer}
\label{sec:localizer}

The goal of this section is to prove our main result, i.e. \cref{main_th:Quillen}, or rather its generalized version \cref{main_th:Quillen_refined}. One implication is fairly easy:

If $\Ccal$ is a combinatorial model category then its class of equivalences $\Wcal$ is a small-generated localizer: It is clearly a localizer and we claim that if $J$ is any set of generating anodyne cofibrations then $\Wcal(J) = \Wcal$. Indeed $J \subset \Wcal$ hence $\Wcal(J) \subset \Wcal$, and as any map in $\Wcal$ factors as an anodyne cofibration followed by an anodyne fibrations and both anodyne cofibrations and anodyne fibrations are in $\Wcal(J)$, this implies that $\Wcal  \subset \Wcal(J)$.

Also if $\Ccal$ is a combinatorial model category, then a map which is a fibration and has the lifting property against all core cofibrations is a weak equivalence hence an anodyne fibration, hence we always have a set $J_0$ as in \cref{main_th:Quillen_refined}: any set of generating anodyne cofibrations works. We only need to show the converse:

\begin{assumptions}\label{ass:general_main_th} In this section, we work under the assumptions of \cref{main_th:Quillen_refined}. More precisely, we consider a combinatorial structured category $\Ccal$ endowed with a small-generated $\Ccal$-localizer $\Wcal=\Wcal(S)$, for $S$ a set of arrows. We assume that we have a set $J_0$ of cofibrations in $\Ccal$ that are all in $\Wcal$ and such that an arrow of $\Ccal$ that has the right lifting property against all core cofibrations and all arrows in $J_0$ is an anodyne fibration.\end{assumptions}

We start with the following observation, that has nothing to do with small-generation and applies to any localizer:

\begin{lemma} \label{lem:2_out_of_6} For any $\Ccal$-localizer $\Wcal$,
\begin{itemize}
\item $\Wcal$ satisfies the $2$-out-of-$6$ condition, that is given $f,g,h$ composable arrows such that $g \circ f$ and $h \circ g$ are in $\Wcal$, then $f,g,h$ and $hgf$ are in $\Wcal$.
\item $\Wcal \cap \cof$ is closed under retract.
\end{itemize}

\end{lemma}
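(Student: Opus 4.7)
The plan is to prove the two statements in the reverse order they are presented: I will first address part (1), and then derive part (2) as an immediate consequence of part (1). Indeed, given $i \in \cof$ a retract of $j \in \Wcal \cap \cof$ via sections $s_A, s_B$ and retractions $r_A, r_B$, I would form the pushout $P = A \cup_{A'} B'$ of $j$ along $r_A$. The pushout cone $\tilde\jmath : A \to P$ is a pushout of $j \in \Wcal \cap \cof$, hence lies in $\Wcal \cap \cof$ by the pushout axiom for a localizer. The universal property of the pushout provides a map $k : P \to B$ with $k \tilde\jmath = i$ (using $i$ on the $A$-leg and $r_B$ on the $B'$-leg, which agree on $A'$ because $i r_A = r_B j$). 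Setting $\sigma := (B' \to P) \circ s_B : B \to P$, one checks $k \sigma = r_B s_B = 1_B$ and $\sigma i = \tilde\jmath$ (using $s_B i = j s_A$ and the relation $\tilde\jmath r_A = (B' \to P) \circ j$ from the pushout square). Then the composable triple $i, \sigma, k$ satisfies $\sigma i = \tilde\jmath \in \Wcal$ and $k \sigma = 1_B \in \Wcal$, so 2-out-of-6 yields $i \in \Wcal$, proving part (2).

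\textbf{Reducing part (1) to cofibrations.} Given composable $f, g, h$ with $gf, hg \in \Wcal$, I would factor $f$ as $X_0 \xrightarrow{i_f} M_f \xrightarrow{p_f} X_1$ with $i_f$ a cofibration and $p_f$ an anodyne fibration, then factor $g p_f = p_1 i_1$ and $h p_1 = p_2 i_2$ similarly. Since each $p_*$ is anodyne and therefore in $\Wcal$, repeated use of 2-out-of-3 gives the equivalences $f \in \Wcal \Leftrightarrow i_f \in \Wcal$, $g \in \Wcal \Leftrightarrow i_1 \in \Wcal$ (via $g p_f = p_1 i_1$), $h \in \Wcal \Leftrightarrow i_2 \in \Wcal$, $gf \in \Wcal \Leftrightarrow i_1 i_f \in \Wcal$, and $hg \in \Wcal \Leftrightarrow i_2 i_1 \in \Wcal$. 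The hypotheses thus become $i_1 i_f, i_2 i_1 \in \Wcal \cap \cof$, and by a further 2-out-of-3 it suffices to produce one of $i_f, i_1, i_2$ in $\Wcal$ (the others, as well as the triple composite, then follow automatically, giving $hgf \in \Wcal$).

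\textbf{The cofibration case via double pushout.} For this, I would form $N_1 = M_f \cup_{X_0} M_1$ along $i_f$ and $i_1 i_f$; the cone $\alpha : M_f \to N_1$ is a pushout of $i_1 i_f \in \Wcal \cap \cof$ and therefore lies in $\Wcal \cap \cof$, while $\beta : M_1 \to N_1$ is a pushout of $i_f$. Gluing $M_2$ to $N_1$ along $\alpha$ and $i_2 i_1 : M_f \to M_2$ produces $N_3 = N_1 \cup_{M_f} M_2$, in which the cone $\gamma : N_1 \to N_3$ (pushout of $i_2 i_1 \in \Wcal \cap \cof$) and $\delta : M_2 \to N_3$ (pushout of $\alpha \in \Wcal \cap \cof$) both lie in $\Wcal \cap \cof$. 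The canonical folding $\Psi : N_3 \to M_2$, sending $M_f, M_1, M_2$ via $i_2 i_1, i_2, 1_{M_2}$ respectively, satisfies $\Psi \delta = 1_{M_2}$ and $\Psi \gamma \alpha = i_2 i_1$. Since $\gamma \alpha, i_2 i_1 \in \Wcal$, 2-out-of-3 forces $\Psi \in \Wcal$. The equations $\Psi \gamma \beta = i_2$ and the pushout coherence $\alpha i_f = \beta (i_1 i_f)$ then chain together via 2-out-of-3 to propagate $\Wcal$-membership between $i_f$, $i_1$, $i_2$ and auxiliary maps like $\beta$.

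\textbf{Main obstacle.} The genuinely delicate point is that this chain of 2-out-of-3 deductions naturally produces bi-implications ($i_f \in \Wcal \Leftrightarrow i_1 \in \Wcal \Leftrightarrow i_2 \in \Wcal$) rather than an absolute conclusion, so one must break the resulting circularity. I expect the resolution to require either an additional symmetric construction that exploits the asymmetry between $X_0$ and $M_2$ in the hypotheses, or a transfinite iteration of the $N_3$-construction that, combined with the transfinite composition axiom for $\Wcal \cap \cof$, collapses the circular pattern into an honest $\Wcal$-membership at the telescopic limit. This is the only step that uses the full force of the localizer axioms (beyond 2-out-of-3), and it is where the hypothesis that $\cof \cap \Wcal$ be closed under transfinite composition, not just pushout, is likely to enter.
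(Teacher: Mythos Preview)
Your derivation of part (2) from part (1) is correct and essentially identical to the paper's: push $j$ out to the domain of $i$, then arrange the resulting ``retract with common domain'' as a $2$-out-of-$6$ configuration.

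However, your argument for part (1) has a genuine gap, and you have yourself identified it under ``Main obstacle''. Your double-pushout construction only establishes the circular implication $i_f \in \Wcal \Leftrightarrow i_1 \in \Wcal \Leftrightarrow i_2 \in \Wcal$, and you do not actually carry out any construction that breaks the circularity. Speculating that ``a transfinite iteration \dots\ is likely to enter'' is correct in spirit, but it is not a proof, and it is not clear how your $N_3$-construction would iterate usefully: each iteration produces new objects with no evident map back to the original targets, so there is no obvious telescope whose colimit says anything about $i_f$.

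The paper's argument is both simpler and avoids the pushout machinery entirely. After reducing to the case where $f : A \cto B$ is a cofibration (one factorization suffices; there is no need to replace $g$ and $h$ as well), one builds an $\omega$-zigzag by iterated (cofibration, anodyne fibration) factorizations: factor $g$ as $B \cto A_1 \overset{\ano}{\twoheadrightarrow} C$, then factor $A_1 \to D$ as $A_1 \cto B_1 \overset{\ano}{\twoheadrightarrow} D$, then factor $B_1 \to C$ as $B_1 \cto A_2 \overset{\ano}{\twoheadrightarrow} C$, and so on. At each stage, $2$-out-of-$3$ (using $gf \in \Wcal$, $hg \in \Wcal$, and that anodyne fibrations are in $\Wcal$) shows that the horizontal cofibrations $A_{i-1} \cto A_i$ and $B_{i-1} \cto B_i$ lie in $\Wcal$. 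The diagonal maps witness $\colim_i A_i \simeq \colim_i B_i$, and the transfinite-composition axiom for $\Wcal \cap \cof$ gives $A \cto \colim A_i$ and $B \cto \colim B_i$ in $\Wcal$; then $f \in \Wcal$ by $2$-out-of-$3$. This is where the transfinite-composition axiom enters, and it does so in a much more direct way than your proposed iteration of pushouts.
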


We are grateful to D.-C.~Cisinski for pointing this out. In a previous draft of the paper, we had to include the $2$-out-of-$6$ condition in the definition of a localizer, hence creating a mismatch with the previous work of Cisinski in the case of Grothendieck toposes from \cite{cisinski2002theories} and \cite{cisinski2006prefaisceaux}. \Cref{lem:2_out_of_6} can be deduced from Proposition~6.2 of \cite{cisinski2010categories}, which more generally shows that $\Wcal$ is strongly saturated, i.e. is exactly the class of arrows inverted in the localization at $\Wcal$ (in particular $\Wcal$ itself is also closed under retract). We give below a version of this argument more adapted to our setting, but it is directly inspired from the proof of Proposition~6.2 in \cite{cisinski2010categories}.

\begin{proof} For the first point, we start from a diagram

\[\begin{tikzcd}
A \arrow[rd, "f"'] \arrow[rr, "\in \Wcal"] &                                       & C \arrow[rd, "h"] &   \\
                                      & B \arrow[rr, "\in \Wcal"'] \arrow[ru, "g"] &                   & D.
\end{tikzcd}\]
  
As $\Wcal$ satisfies $2$-out-of-$3$, it is enough to show that $f \in \Wcal$ to conclude that first $g$, then $h$ and finally $hgf$ are in $\Wcal$. Moreover, by replacing $f$ by a (cofibration, anodyne fibration) factorization we can freely assume that $f$ is a cofibration. We then gradually extend this diagram as follow, first we factor the map $g: B \to C$ into a cofibration followed by an anodyne fibration $B \cto A_1 \overset{\ano}{\twoheadrightarrow} C$ to get
\[\begin{tikzcd}
A \arrow[rd, "f"', cto] \arrow[rr, "\in \Wcal", cto] &                                             & A_1 \arrow[r, "\ano", two heads] & C \arrow[d] \\
                                                       & B \arrow[rr, "\in \Wcal"'] \arrow[ru, cto] &                                     & D          
\end{tikzcd}\]

The map $A \cto A_1$ is a cofibration as a composite of cofibrations and is in $\Wcal$ by $2$-out-of-$3$ as both $A_1 \overset{\ano}{\twoheadrightarrow} C$ and $A \to C$ are in $\Wcal$. We then factor the map $A_1 \to D$ in $A_1 \cto B_1 \overset{\ano}{\twoheadrightarrow} D$ to get
\[\begin{tikzcd}
A \arrow[rd, "f"', cto] \arrow[rr, "\in \Wcal", cto] &                                                   & A_1 \arrow[rd, cto] \arrow[rr, "\ano", two heads] &                                     & C \arrow[d, "h"] \\
                                                       & B \arrow[rr, "\in \Wcal"', cto] \arrow[ru, cto] &                                                       & B_1 \arrow[r, "\ano", two heads] & D               
\end{tikzcd}\]
Here again $B \cto B_1$ is a  cofibration by composition, the map $B \to D \in \Wcal$ from the previous diagram $B_1 \to D$ is an anodyne fibration, hence $B \cto B_1$ is in $\Wcal$. We iterate this to gradually construct a diagram
\[ \begin{tikzcd}
A \arrow[rd, "f"', cto] \arrow[rr, "\in \Wcal", cto] &                                                   & A_1 \arrow[rd, cto] \arrow[rr, "\in \Wcal", cto] &                                                     & A_2 \arrow[rd, cto] \arrow[rr, "\in \Wcal", cto] &                                                    & \dots \arrow[r, "\ano", two heads]  & C \arrow[d, "h"] \\
                                                       & B \arrow[rr, "\in \Wcal"', cto] \arrow[ru, cto] &                                                    & B_1 \arrow[ru, cto] \arrow[rr, "\in \Wcal"', cto] &                                                    & B_2 \arrow[r, "\in \Wcal"', cto] \arrow[ru, cto] & \dots \arrow[r, "\ano"', two heads] & D               
\end{tikzcd} \]

at each step $A_i$ is constructed as a factorization $B_{i-1} \cto A_i \twoheadrightarrow C$, the map $A_{i-1} \cto A_i$ is a cofibration by composition, $A \cto A_i$ is in $\Wcal$ by $2$-out-of-$3$ for the composite $A \cto A_i \overset{\ano}{\twoheadrightarrow} C$, hence $A_{i-1} \cto A_i$ is in $\Wcal$ by $2$-out-of-$3$ in $A \cto A_{i-1} \cto A_i$. We then construct $B_i$ as a factorization $A_i \cto B_i \twoheadrightarrow D$ and conclude by similar arguments.

The colimit of the diagonal map in the diagram above induces an isomorphism $\colim B_i \simeq \colim A_i$, and that both maps $A \cto \colim A_i$ and $B \cto \colim B_i$ are in $\Wcal$ as $\omega$-transfinite compositions of cofibrations in $\Wcal$. Hence $f \in \Wcal$ by $2$-out-of-$3$.

For the second point, as $\Wcal \cap \cof$ is closed under pushouts, if $i$ is a retract of $j_0 \in \Wcal \cap \cof$, then $i$ is also a retract of $j$, the pushout of $j_0$ to the domain of $i$. We hence have an ``identity on domain'' retract diagram as on the left below
\[
\begin{tikzcd}
A \ar[r,equal] \ar[d,"i",cto] & A \ar[d,cto,"\in \Wcal","j"swap] \ar[r,equal] & A \ar[d,cto,"i"] \\ 
B \ar[rr,bend right=30,equal] \ar[r,"u"] & C \ar[r,"v"] & B 
\end{tikzcd} \qquad 
\begin{tikzcd}
A \ar[d,cto,"i"swap] \ar[r,"j \in \Wcal",cto] & C \ar[d,"v"] \\
 B \ar[ur,"u"] \ar[r,equal] & B 
\end{tikzcd}
\]
which can be rearranged in the $2$-out-of-$6$ diagram on the right above, hence showing that $i \in \Wcal$. As $i$ is a retract of a cofibration, it is also a cofibration hence $i \in \Wcal \cap \cof$.
\end{proof}

\begin{remark}\label{rk:SOA_access_rank} The (cofibrations, anodyne fibrations) weak factorization system produced by the small object argument is functorial, i.e. take the form of a functor $\Ccal^\to \to \Ccal^{\to \to}$. A quick analysis shows that if $\Ccal$ is $\lambda$-combinatorial then this functor preserves $\lambda$-filtered colimits (see for example the proof of Proposition 4.22 in \cite{garner2009understanding}, or Proposition B.5 in \cite{henry2019CombWMS}). That is the factorization functor is accessible, it hence follows that there is a regular cardinal $\kappa$ such that it preserves $\kappa$-presentable objects, i.e. it induces a functor
\[ (\Pr_\kappa \Ccal)^\to =  \Pr_\kappa (\Ccal^\to) \to \Pr_\kappa(\Ccal^{\to\to})=(\Pr_\kappa \Ccal)^{\to\to}\]
Hence showing that one can construct (cofibration, anodyne fibration) factorizations within $\Pr_\kappa \Ccal$. An explicit description of the cardinals $\kappa$ for which this holds is given in Theorem~4.4 of \cite{low2016heart}, what we need is a regular cardinal $\kappa$ such that
\begin{itemize}
\item For any two $\lambda$-presentable objects $X$ and $Y$, the set of all functions from $X$ to $Y$ is $\kappa$-small.
\item The set of generating cofibrations is $\kappa$-small.
\item $\lambda$ is sharply below $\kappa$, i.e.  $\lambda < \kappa$ and given any $\kappa$-small set $X$ , the poset $\Pcal_\lambda(X)$ of $\lambda$-small subsets of $X$ admit a $\kappa$-small cofinal subsets. An interested reader can consult section 1 of \cite{low2016heart} for some introduction to this notion. We recall that $\lambda$ is always sharply below $\lambda^+$ and if $\lambda \leqslant \kappa$ then $\lambda$ is sharply below $(2^\kappa)^+$. In particular one can find arbitrary large $\kappa$ with this property.
\end{itemize}

\end{remark}

\begin{assumptions}\label{ass:on_kappa} We fix $\kappa$ a regular cardinal such that:
\begin{itemize}
\item $\kappa$ is uncountable.
\item $\Pr_\kappa \Ccal$ admits a (cofibration, anodyne fibration) factorization. See \cref{rk:SOA_access_rank} for details of what this means for $\kappa$.
\item $ S \subset \Pr_\kappa \Ccal^\to$.
\item $J_0 \subset \Pr_\kappa \Ccal^\to$. That is, an arrow that has the lifting property against all cofibrations in $\Wcal$ between $\kappa$-presentable objects and all core cofibrations is an anodyne fibration.
\end{itemize}

Given such a $\kappa$, we take $I$ to be the set of cofibrations between $\kappa$-presentable objects, and $J$ to be the subset of $I$ of these cofibrations that are in $\Wcal$. These sets $I$ and $J$ are respectively our sets of generating cofibrations and anodyne cofibrations of a pre-model structure on $\Ccal$, which we will show is a Quillen model structure. It follows from \cref{rk:cellular_are_enough} (and the second point of \cref{lem:2_out_of_6} in the case of $J$) that $I$-cellular maps and $J$-cellular maps coincide with $I$-cofibrations and $J$-cofibrations. \end{assumptions}

\begin{lemma}\label{lem:left_sat} The pre-model structure on $\Ccal$ generated by $I$ and $J$ as in \cref{ass:on_kappa} is saturated, that is any $I$-cofibration which is an acyclic cofibration is a $J$-cofibration and any $J$-fibration which is an acyclic fibration is a $I$-fibration. \end{lemma}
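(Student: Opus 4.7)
Right saturation follows immediately from the assumption on $J_0$ and the inclusion $J_0 \subset J$ (which holds because $J_0$ consists of cofibrations in $\Wcal$ between $\kappa$-presentable objects, by the choice of $\kappa$). An acyclic fibration $p$, being a $J$-fibration with the right lifting property against all core cofibrations, automatically has the right lifting property against $J_0$, so the defining property of $J_0$ forces $p$ to be an anodyne fibration.

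For left saturation, let $i \colon A \cto B$ be an acyclic cofibration. A first reduction is to show that $i \in \Wcal$ via a $2$-out-of-$6$ argument. Pick an anodyne cofibration $k_B \colon B \cto B^f$ into a fibrant object $B^f$ and factor the composite $k_B \circ i$ as $A \overset{k_A}{\cto} A^f \overset{p}{\fto} B^f$, with $k_A$ an anodyne cofibration and $p$ a fibration. Then $A^f$ is fibrant (since $B^f$ is), so $p$ is a core fibration, and $i$ lifts against $p$ to produce $r \colon B \to A^f$ satisfying $r i = k_A$ and $p r = k_B$. Since $k_A, k_B \in \Wcal$, applying the $2$-out-of-$6$ property (\cref{lem:2_out_of_6}) to the composable triple $(i, r, p)$ gives $i \in \Wcal$. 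Next, factor $i = q \circ j$ with $j$ an anodyne cofibration and $q$ a fibration, so that $q \in \Wcal$ by $2$-out-of-$3$. By the retract lemma applied to this factorization, once $q$ is shown to be an anodyne fibration, $i$ is a retract of $j$ and is therefore an anodyne cofibration; by right saturation, it then suffices to show that $q$ is an acyclic fibration.

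The main obstacle is therefore the claim that every fibration $q$ in $\Wcal$ is an acyclic fibration. The plan is to use the weak model structure on $\Ccal$ provided by \cref{th:wms_from_small_cellular} (whose hypotheses, verified separately, follow from the localizer axioms on $\Wcal$ together with $J$ being the \emph{full} set of cofibrations in $\Wcal$ between $\kappa$-presentable objects): in a weak model category, a core fibration lying in $\Wcal$ is a core acyclic fibration. Given a core cofibration $u$ and a lifting square for $q \colon F \to B$, I fibrantly replace $B$ via $k_B$ as above and factor $F \to B^f$ as $F \overset{k_F}{\cto} F^f \overset{q^f}{\fto} B^f$. Applying $2$-out-of-$3$ to the commuting square $k_B q = q^f k_F$ yields $q^f \in \Wcal$; since $q^f$ is a core fibration, the weak model structure shows it is core acyclic, hence anodyne by right saturation. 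The lift of $u$ against $q^f$ is then to be descended to a lift against $q$ via the pullback $P = B \times_{B^f} F^f$, whose projection to $B$ is an anodyne fibration as a pullback of $q^f$, by a careful analysis of the comparison map $F \to P$. This descent is the most delicate part of the argument and is where the weak model structure machinery enters most forcefully.
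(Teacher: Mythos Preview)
Your right-saturation argument and your $2$-out-of-$6$ argument showing $i \in \Wcal$ are both correct and match the paper exactly. The divergence, and the gap, begins immediately after.

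The central problem is the sentence ``in a weak model category, a core fibration lying in $\Wcal$ is a core acyclic fibration.'' This conflates the localizer $\Wcal$ with the class of weak equivalences of the weak model structure from \cref{th:wms_from_small_cellular}. The weak model structure does tell you that a core fibration which is a \emph{weak equivalence} is acyclic, but at this point in the argument nothing identifies $\Wcal$ with the weak equivalences; that identification is precisely \cref{th:final_Quillen}, whose proof uses \cref{lem:left_sat}. So the step is circular. The subsequent descent through the pullback $P = B \times_{B^f} F^f$ is, as you acknowledge, incomplete, and there is no evident reason the comparison map $F \to P$ should let you lift against $q$; but the argument has already failed before reaching that point.

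The paper avoids all of this by a reduction you do not make. Using Theorem~4.1 of \cite{henry2019CombWMS}, the left saturation of a $\kappa$-combinatorial pre-model category is again $\kappa$-combinatorial, so it suffices to check that acyclic cofibrations \emph{between $\kappa$-presentable objects} are anodyne. Your $2$-out-of-$6$ argument then shows such an $i : A \cto B$ lies in $\Wcal$; but now $i$ is a cofibration in $\Wcal$ between $\kappa$-presentable objects, hence $i \in J$ \emph{by the very definition of $J$}, and therefore $i$ is anodyne. The reduction to $\kappa$-presentable objects is what turns ``$i \in \Wcal$'' into ``$i$ is anodyne'' in one line, with no appeal to the weak model structure and no descent argument.
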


\begin{proof} Right saturation follows from the existence of the set $J_0$ (see \cref{ass:general_main_th}) and the last condition of \cref{ass:on_kappa} which imply that $J_0 \subset J$. Hence a $J$-fibration with the lifting property against all core cofibrations is an anodyne fibration.

We move to left saturation. By Theorem~4.1 of \cite{henry2019CombWMS}, as $\kappa$ is an uncountable regular cardinal, the left saturation of the $\kappa$-combinatorial pre-model category $\Ccal$, (whose anodyne cofibrations are the acyclic cofibrations of $\Ccal$) is also $\kappa$-combinatorial. So it is enough to show that all acyclic cofibrations between $\kappa$-presentable objects are anodyne cofibrations to conclude the proof.

Let $A \cto B$ be such an acyclic cofibration between $\kappa$-presentable objects. We first take a fibrant replacement of $B$
\[ A \cto B \overset{\ano}{\cto} D \]
and a ($J$-cofibration, fibration) factorization $A \overset{\ano}{\cto} C \twoheadrightarrow D$ of the composite map $A \to D$. This gives us a solid diagram
\[
\begin{tikzcd}
A \ar[d,cto] \ar[r,"\ano",cto] & C \ar[d,two heads,"\pi"] \\
B \ar[ur,dotted] \ar[r,"\ano"swap,cto] & D
\end{tikzcd}
\]
and a dotted arrow exists because $\pi$ is a core fibration and $A \cto B$ is assumed to be an acyclic cofibration. The two $J$-cofibrations are in $\Wcal$ by assumptions, so as $\Wcal$ satisfies $2$-out-of-$6$ by \cref{lem:2_out_of_6}, the map $A \cto B$ is in $\Wcal$. As it is a map between $\kappa$-presentable objects, it is in $J$, and hence it is an anodyne cofibration. \end{proof}

\begin{prop}\label{prop:C_is_a_left_semi} The pre-model structure on $\Ccal$ generated by $I$ and $J$ is a Fresse left semi-model category. \end{prop}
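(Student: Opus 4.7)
The plan is to apply \cref{prop:left_semi_from_small_cellular} to our pre-model structure. This requires verifying that, viewing $\Ccal$ as a $\kappa$-cellular category with $I$ as generating set of cellular maps, the set $J$ satisfies conditions (i), (ii), and (iii') of \cref{ass:cellular_th_wms_from_small}. Since $\kappa$ is uncountable by \cref{ass:on_kappa}, \cref{prop:cof_are_cellular} ensures that the $I$-cellular maps coincide with the cofibrations of $\Ccal$, so cellular and cofibrant agree throughout and the translation between cellular and structured categories is lossless.

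Conditions (i) and (ii) are essentially automatic from $\Wcal$ being a localizer. For (i), isomorphisms are anodyne fibrations and hence lie in $\Wcal$, while pushouts and $\kappa$-small transfinite composites of maps in $J$ remain cofibrations in $\Wcal$ by item (3) of \cref{def:localizer} and remain between $\kappa$-presentable objects by closure of $\Pr_\kappa \Ccal$ under $\kappa$-small colimits. For (ii), the $2$-out-of-$3$ property of $\Wcal$ immediately gives $i \in \Wcal$, and $i$ is by hypothesis a cofibration between $\kappa$-presentable objects, so $i \in J$.

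The one nontrivial step is (iii'). Given a cofibration $i \colon A \cto B$ between $\kappa$-presentable cofibrant objects, I would construct the required cellular square by factoring the codiagonal inside $\Pr_\kappa \Ccal$ as
\[ B \coprod_A B \cto IB \overset{\ano}{\twoheadrightarrow} B, \]
which is possible precisely because of the second bullet of \cref{ass:on_kappa}. Setting $B' = B$ with $A \to B'$ equal to $i$, and $C = IB$ with both $B \cto C$ and $B' \cto C$ equal to the two inclusions $B \to B \coprod_A B \cto IB$ produces a cellular square, the comparison map $B \coprod_A B \to C$ being the chosen cofibration. Each inclusion $B \cto IB$ lies in $\Wcal$ by $2$-out-of-$3$ applied to the identity factorization $B \to IB \overset{\ano}{\twoheadrightarrow} B$ (the anodyne fibration is in $\Wcal$ and the composite is the identity), and is a cofibration between $\kappa$-presentable objects, so both lie in $J$. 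Finally, the anodyne fibration $IB \twoheadrightarrow B$ itself supplies the required retraction of the right vertical arrow.

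With the three conditions verified, \cref{prop:left_semi_from_small_cellular} produces a Fresse left semi-model structure on the core left saturation of our pre-model structure. It remains to observe that this saturation is a no-op in our setting: by \cref{lem:left_sat} the pre-model structure is already both left and right saturated. Combined with the weak model structure provided by \cref{th:wms_from_small_cellular,cor:wms_from_small_2} and the strong cylinders on cofibrant objects from \cref{prop:iii'_imply_cylinder}, the characterization of \cref{rk:charac_of_left_ms} applies directly with $\Wcal$ as weak equivalences, and the original pre-model structure is itself a Fresse left semi-model category. The main obstacle I anticipate is the construction in (iii'), which is ultimately what dictates the size of $\kappa$ demanded in \cref{ass:on_kappa}.
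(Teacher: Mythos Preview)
Your proof is correct and follows essentially the same route as the paper: verify conditions (i), (ii), (iii') of \cref{ass:cellular_th_wms_from_small} for $J$, invoke \cref{prop:left_semi_from_small_cellular}, and use \cref{lem:left_sat} to see that passing to the core left saturation changes nothing. Your construction for (iii') via factoring the codiagonal $B \coprod_A B \cto I_A B \overset{\ano}{\twoheadrightarrow} B$ inside $\Pr_\kappa \Ccal$ is exactly the paper's argument.

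One small slip: in your final paragraph you write that \cref{rk:charac_of_left_ms} ``applies directly with $\Wcal$ as weak equivalences''. At this stage of the paper the class of weak equivalences of the left semi-model structure has \emph{not} yet been identified with $\Wcal$; that identification is the content of the later \cref{th:final_Quillen} and relies on \cref{prop:J_cof_are_eq}. Fortunately \cref{rk:charac_of_left_ms} does not require you to name the weak equivalences at all, so simply drop that clause. The re-derivation via \cref{rk:charac_of_left_ms} is in any case redundant: once you have \cref{prop:left_semi_from_small_cellular} and know from \cref{lem:left_sat} that $\Ccal$ is already left saturated, you are done.
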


In fact, as we already proved in \cref{lem:left_sat} that this premodel structure is right saturated, so it is even a Spitzweck left semi-model category.

\begin{proof} We apply \cref{th:left_semi_ms_from_small}, or rather its more precise form \cref{prop:left_semi_from_small_cellular}. As by \cref{lem:left_sat}, $\Ccal$ is already left saturated we do not need to take the core left saturation and we directly get that $\Ccal$ is a left semi-model category. Condition $(i)$ and $(ii)$ of \ref{ass:cellular_th_wms_from_small} are immediate by definition of $J$.

The only non-trivial condition is $(iii')$: given $A \cto B$ a cofibration between $\kappa$-presentable cofibrant objects in $\Ccal$ we form a (cofibration, anodyne fibration) factorization of the codiagonal map: (we can do this because of the second point of \cref{ass:on_kappa})
\[ B \coprod_A B \cto I_A B \overset{\ano}{\twoheadrightarrow} B \]
it is immediate by $2$-out-of-$3$ that $B \cto I_A B$ is in $\Wcal$, hence in $J$. Because of the second point of \cref{ass:on_kappa}, we can assume that $I_A B$ is also $\kappa$-presentable. This gives a diagram:
\[ \begin{tikzcd}
  A \ar[r,cto] \ar[dr,phantom,"\ulcorner"{name=M,description}]\ar[d,cto] & B \ar[d,cto] \\
B \ar[r,cto] & I_A B \ar[from = M,cto] \ar[u,two heads,"\ano"description,bend right=30]
\end{tikzcd} \]

as required by $(iii')$.

\end{proof}

In particular, $\Ccal$ being a left semi-model category it has a class of weak equivalences, characterized in terms of its weak factorization system. At this point, even if they are closely related, it is unclear that this class of equivalences coincide with $\Wcal$. When we talk about equivalences in $\Ccal$, we mean in the sense of this left semi-model category. Explicitly, an arrow with cofibrant domain is an equivalence if it factors as an anodyne cofibration (a $J$-cofibration) followed by an anodyne fibration and a general arrow is an equivalence if and only its pre-composition with a cofibrant replacement of its domain is an equivalence in the previous sense.

The following is a small improvement of proposition 7.3 of \cite{dugger2001combinatorial}, the proof is essentially the same.

\begin{lemma}\label{lem:kappa_filtered_colim_of_equivalence} In $\Ccal$, and more generally in any $\kappa$-combinatorial\footnote{In the sense of \cite{henry2019CombWMS}. It means that the underlying category is locally $\kappa$-presentable and the generating cofibrations and anodyne cofibrations are arrows between $\kappa$-presentable objects.} Fresse left semi-model category the class of weak equivalences is closed under $\kappa$-filtered colimits in $\Ccal^\to$.\end{lemma}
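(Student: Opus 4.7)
The strategy, following Dugger's Proposition~7.3 in \cite{dugger2001combinatorial}, is to reduce to maps between cofibrant objects via a cofibrant replacement functor, factor each such map as a cofibration followed by an anodyne fibration, and then exploit the stability of both halves of the factorization under $\kappa$-filtered colimits.

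The proof rests on three stability observations. First, since all generating (anodyne) cofibrations are maps between $\kappa$-presentable objects, the functorial small object argument produces factorization functors $\Ccal^\to \to \Ccal^{\to\to}$ that preserve $\kappa$-filtered colimits (the same analysis as in \cref{rk:SOA_access_rank}); in particular, there is a $\kappa$-accessible cofibrant replacement functor $Q$ equipped with a natural anodyne fibration $QX \twoheadrightarrow X$. Second, the class of anodyne cofibrations is closed under $\kappa$-filtered colimits in $\Ccal^\to$, since it is closed under pushouts and transfinite compositions. Third, the class of anodyne fibrations is closed under $\kappa$-filtered colimits in $\Ccal^\to$: given a lifting problem against some $i : A \cto B$ in the generating set $I$, both $A$ and $B$ are $\kappa$-presentable, so the square factors through some stage of the diagram by $\kappa$-filteredness, and a lift there transports to the colimit.

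I also use two easy remarks about weak equivalences in a Fresse left semi-model category: every anodyne fibration lies in $\Wcal$ (it has the right lifting property against core cofibrations in particular, so it is an acyclic fibration, hence in $\Wcal$ by definition); and every cofibration between cofibrant objects that lies in $\Wcal$ is an anodyne cofibration (the defining property of a Fresse left semi-model category applied to core cofibrations).

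Putting things together, given a $\kappa$-filtered diagram $(f_i : X_i \to Y_i)_i$ of weak equivalences, applying $Q$ pointwise and using $2$-out-of-$3$ with the natural anodyne fibrations $QX_i \twoheadrightarrow X_i$ and $QY_i \twoheadrightarrow Y_i$ shows that each $Qf_i$ is a weak equivalence between cofibrant objects. Factor $Qf_i$ functorially as $QX_i \overset{k_i}{\cto} Z_i \overset{p_i}{\twoheadrightarrow} QY_i$ with $p_i$ an anodyne fibration. Then $p_i \in \Wcal$ by the first remark, so $k_i \in \Wcal$ by $2$-out-of-$3$; since $k_i$ has cofibrant domain $QX_i$, the second remark shows that it is an anodyne cofibration. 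Taking the $\kappa$-filtered colimit of this natural factorization gives $\colim k_i$ anodyne cofibration and $\colim p_i$ anodyne fibration (by the second and third stability observation), both in $\Wcal$, hence $\colim Qf_i \in \Wcal$. A final application of $2$-out-of-$3$ with the natural anodyne fibrations $\colim QX_i \twoheadrightarrow \colim X_i$ and $\colim QY_i \twoheadrightarrow \colim Y_i$ concludes that $\colim f_i \in \Wcal$. The only delicate step is the $\kappa$-accessibility of the small object argument factorizations, handled exactly as in \cref{rk:SOA_access_rank}; everything else is formal manipulation with $2$-out-of-$3$.
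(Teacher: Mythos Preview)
Your proof has a genuine gap in the ``second stability observation''. You claim that anodyne cofibrations are closed under $\kappa$-filtered colimits in $\Ccal^\to$ ``since [the class] is closed under pushouts and transfinite compositions'', but closure under pushouts and transfinite compositions does \emph{not} imply closure under filtered colimits in the arrow category. A transfinite composition is a colimit of a chain of composable arrows, which is a very different thing from a colimit of a diagram of parallel arrows. In fact the claim is false for general cofibrantly generated left classes: in abelian groups with $J=\{0\to\mathbb{Z}\}$, the class $J\text{-cof}$ consists of split monomorphisms with projective (hence free) cokernel, but the filtered colimit of the maps $0\to\mathbb{Z}$ along multiplication by $2$ is $0\to\mathbb{Z}[1/2]$, and $\mathbb{Z}[1/2]$ is not projective. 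So even though each $k_i$ is an anodyne cofibration, you cannot conclude that $\colim k_i$ is one; and in a Fresse left semi-model category you only know that \emph{core} anodyne cofibrations lie in $\Wcal$, so you cannot finish.

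The paper avoids this by a different reduction: it first treats the case where all objects involved are \emph{fibrant}, using the criterion that a map between fibrant objects is a weak equivalence if and only if it has the weak (``up to homotopy'') right lifting property against the generating cofibrations. Since those generators are between $\kappa$-presentable objects, any such lifting square factors through some stage $f_i$ of the $\kappa$-filtered diagram, where a weak lift exists because $f_i$ is an equivalence between fibrant objects. The general case is then handled by applying $X\mapsto (X^{\cof})^{\fib}$ and using that both replacement functors are $\kappa$-accessible. A variant of your strategy using the (anodyne cofibration, fibration) factorization instead would work when acyclic fibrations coincide with anodyne fibrations (e.g.\ in the right-saturated situation of \cref{lem:left_sat}), since then the fibration half at each stage is an anodyne fibration and those \emph{are} closed under $\kappa$-filtered colimits; but for a general Fresse left semi-model category this fails for the same reason your version does.
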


\begin{proof}
  Let $(f_i:X_i \to Y_i)_{i \in I}$ a $\kappa$-filtered diagram in $\Ccal^\to$ such that each $f_i:X_i \to Y_i$ is a weak equivalence. We denote by $f_\infty:X_\infty \to Y_\infty$ its colimits, which we want to prove is also a weak equivalence.

First we assume that all object involved ($X_i,Y_i,X_\infty,Y_\infty$) are fibrant. Then we can use the criterion that a map between fibrant objects is a weak equivalence if and only if it has the weak, or ``up to homotopy'' lifting property against a set of generating cofibrations (this is proved for weak model categories in Appendix~A.2 of \cite{henry2018weakmodel}, see Theorem~A.2.6 and Remark~A.2.7). Let $A \cto B$ be a generating cofibration, which can hence assume to be between $\kappa$-presentable objects, and consider a lifting problem as on the left below
\[  \begin{tikzcd}
 A \ar[d,cto] \ar[r]  & X_\infty \ar[d,"f_\infty"] \\
B \ar[r] & Y_\infty .
\end{tikzcd}
\qquad  \begin{tikzcd}
 A \ar[d,cto] \ar[r]& X_i \ar[d,"f_i"] \ar[r]  & X_\infty \ar[d,"f_\infty"] \\
B \ar[r] & Y_i \ar[r]& Y_\infty
\end{tikzcd} \]
As $A$ and $B$ are $\kappa$-presentable and the colimit over $I$ is $\kappa$-filtered, this lifting problem can be factored through one of the $f_i$ as on the right above.  Finally, as each $f_i$ is an equivalence between fibrant objects the square on the left has a weak solution, which gives a weak solution to the initial square

\[\begin{tikzcd}[row sep = 5]
A \arrow[ddd] \arrow[rr] \arrow[rd] &                        & X_i \arrow[r] \arrow[ddd,"f_i"] & X_\infty \arrow[ddd,"f_\infty"] \\
                                    & B \arrow[d] \arrow[ru] &                           &                      \\
                                    & I_A B \arrow[rd]       &                           &                      \\
B \arrow[rr] \arrow[ru]             &                        & Y_i \arrow[r]             & Y_\infty            
\end{tikzcd} \]
showing that $f_\infty$ is an equivalence.

In the general case, as the model structure is $\kappa$-combinatorial, the small object argument produces fibrant replacement $X \mapsto X^\fib$ and cofibrant replacement $X \mapsto X^\cof$ functors that are $\kappa$-accessible (as mentioned in \cref{rk:SOA_access_rank}, this can be deduced from the proof of Proposition 4.22 in \cite{garner2009understanding}, or from Proposition B.5 in \cite{henry2019CombWMS}). In a Fresse left semi-model structure, a map $f:X \to Y$ is an equivalence if and only if the corresponding map $(f^\cof)^\fib : (X^\cof)^\fib \to (Y^\cof)^\fib$ is an equivalence, so if each $f_i$ is an equivalence the $(f_i^\cof)^\fib$ are all equivalences as well, as the fibrant and cofibrant replacement preserve $\kappa$-filtered colimits their colimits is $(f_\infty^\cof)^\fib$ hence by the previous part of the proof it is an equivalence, and so $f_\infty$ is an equivalence.
\end{proof}

\begin{prop}\label{prop:J_cof_are_eq} All $J$-cofibrations are weak equivalences of $\Ccal$.\end{prop}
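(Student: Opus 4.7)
The plan is to prove the statement in three stages: first that every individual arrow in $J$ is a weak equivalence of the Fresse left semi-model structure of \cref{prop:C_is_a_left_semi}, then that every $J$-cellular map is a weak equivalence by writing it as a $\kappa$-filtered colimit of arrows in $J$ via the fat small object argument, and finally that every $J$-cofibration is a weak equivalence by retract-closure.

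For the first step, I would cofibrantly replace. Given $j : A \to B$ in $J$, factor $\emptyset \to A$ as $\emptyset \cto A^\cof \overset{\ano}{\twoheadrightarrow} A$, and then factor the composite $A^\cof \to A \overset{j}{\to} B$ as $A^\cof \cto B^\cof \overset{\ano}{\twoheadrightarrow} B$. The second bullet of \cref{ass:on_kappa} ensures that these factorizations can be performed inside $\Pr_\kappa \Ccal$, so $A^\cof$ and $B^\cof$ are $\kappa$-presentable and the cofibration $A^\cof \cto B^\cof$ is between cofibrant $\kappa$-presentable objects. The two anodyne fibrations lie in $\Wcal$ since $\Wcal$ is a localizer, and $j \in \Wcal$ by definition of $J$, so $2$-out-of-$3$ in $\Wcal$ gives $A^\cof \cto B^\cof \in \Wcal$. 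Hence this comparison map is itself in $J$, and is in particular a core anodyne cofibration, so by the defining property of a Fresse left semi-model category it is a weak equivalence of the semi-model structure. The vertical anodyne fibrations are acyclic fibrations (hence weak equivalences), and $2$-out-of-$3$ in the semi-model structure then yields that $j$ is a weak equivalence.

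For the second step, let $i : X \cto Y$ be any $J$-cellular map. Using \cref{lem:all_maps_are_cellular} choose a locally $\kappa$-small well-founded poset $P$ and a diagram $\Xcal : P \to \Pr_\kappa \Ccal$ with $\Xcal(P) = X$; then \cref{lem:FatSOA} applied to $i$ produces an extension $P \sievei Q$ and a $J$-Reedy morphism $\Xcal^Q \to \Ycal$ of $Q$-diagrams identifying $i$ with $\Xcal(Q) \to \Ycal(Q)$. The filtered formula of \cref{rk:S_lambda_P} writes $i$ as the $\kappa$-filtered colimit of the maps $\Xcal(V) \to \Ycal(V)$ for $V$ running over the $\kappa$-small sieves of $Q$. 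By \cref{lem:transfini_compo_of_latching} each $\Xcal(V) \to \Ycal(V)$ is a $\kappa$-small transfinite composition of pushouts of latching maps of a $J$-Reedy morphism, each of which is a pushout of a map in $J$; thus by condition $(i)$ of \cref{ass:cellular_th_wms_from_small} each $\Xcal(V) \to \Ycal(V)$ already belongs to $J$. By Step~1 every such arrow is a weak equivalence, and \cref{lem:kappa_filtered_colim_of_equivalence} lets us conclude that the $\kappa$-filtered colimit $i$ is a weak equivalence.

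Finally, every $J$-cofibration is a retract of a $J$-cellular map, and weak equivalences in a Fresse left semi-model category are closed under retract, which completes the proof. The one genuinely delicate point is keeping Step~1 inside $\Pr_\kappa \Ccal$: this is exactly what the careful choice of $\kappa$ in \cref{ass:on_kappa} (via \cref{rk:SOA_access_rank}) is designed to provide. A secondary subtlety worth flagging is that the ``weak equivalences'' throughout this argument refer to those of the semi-model structure and not a priori to $\Wcal(S)$; the coincidence of these two classes is what the subsequent results of the section will establish.
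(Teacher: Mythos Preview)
Your proposal is correct and follows essentially the same two-step approach as the paper: cofibrant replacement within $\Pr_\kappa \Ccal$ to handle individual maps in $J$, then the fat small object argument combined with \cref{lem:kappa_filtered_colim_of_equivalence} to handle $J$-cellular maps. Your Step~3 is redundant in the paper's setup, since the last sentence of \cref{ass:on_kappa} already records (via \cref{prop:cof_are_cellular} and the retract-closure of $J$ from \cref{lem:2_out_of_6}) that $J$-cellular maps coincide with $J$-cofibrations.
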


Note that it does not immediately follows from the fact that $J$-cofibration are anodyne cofibrations: as we only showed that $\Ccal$ is a left semi-model category, only the $J$-cofibrations with cofibrant domain are already known to be weak equivalences.

\begin{proof} First we observe that all maps in $J$ are weak equivalences. We recall that $J$ is exactly the set of cofibrations between $\kappa$-presentable objects that are in $\Wcal$.  Given $A \cto B$ such a cofibration, we can take a cofibrant replacement $A^\cof$ of $A$ and a (cofibration, anodyne fibration) factorization of $A^\cof \to B$ to get a diagram
\[
\begin{tikzcd}
  A^\cof \ar[r,cto] \ar[d,two heads,"\ano"] & B^\cof \ar[d,two heads,"\ano"] \\
A \ar[r,cto] & B
\end{tikzcd}
\]
where because of \cref{ass:on_kappa} we can assume that $A^\cof$ and $B^\cof$ are also $\kappa$-presentable.

By $2$-out-of-$3$ in $\Wcal$ the cofibration $A^\cof \cto B^\cof$ is in $\Wcal$ and between $\kappa$-presentable objects, so it is one of our generating anodyne cofibrations. In particular it is an anodyne cofibration with cofibrant domain, hence it is a weak equivalence. By $2$-out-of-$3$ for the weak equivalences of $\Ccal$ in the diagram above, it follows that $A \cto B$ is indeed a weak equivalence.

Next we prove the result for a general $J$-cofibration, or rather a $J$-cellular map. Let $A \overset{\ano}{\cto} B$ be a $J$-cellular map in $\Ccal$. Using \cref{lem:all_maps_are_cellular}, we chose a locally $\kappa$-small well-founded poset $P$ and a diagram $\Acal: P \to \Ccal$ such that $A= \colim_{p \in P} \Acal(p)$ where each $\Acal(p)$ is $\kappa$-presentable. By the fat small object argument, there is a locally $\kappa$-small extension $Q$ of $P$ and a diagram $\Bcal : Q \to \Ccal$ extending $\Acal$ so that the map $\Acal \to \Bcal$ is $J$-Reedy, or equivalently so that $\Bcal$ is $J$-Reedy at all $q \notin P$.

For each $U \sievei Q$ a $\kappa$-small sieve, the map $\Acal(P \cap U) \to \Bcal(U)$ is a $J$-cellular map between $\kappa$-presentable objects, hence is itself in $J$, in particular is an equivalence by the first half of the proof. Now the map $A \cto B$ is the colimit of all these maps over all $\kappa$-small sieve $U \sievei Q$, this colimit is $\kappa$-filtered, so it is an equivalence by \cref{lem:kappa_filtered_colim_of_equivalence}.\end{proof}

The following concludes the proof of \cref{main_th:Quillen,main_th:Quillen_refined}, which corresponds to the special case where we additionally assume that $I$ and $J$ are closed under retracts.

\begin{theorem}\label{th:final_Quillen} Under \cref{ass:general_main_th} and \cref{ass:on_kappa}, the model structure on $\Ccal$ generated by $I$ and $J$ is a Quillen model structure and its class of equivalence is $\Wcal(S)$. \end{theorem}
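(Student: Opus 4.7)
The plan is to split the statement into two tasks: upgrading the Fresse left semi-model structure from \cref{prop:C_is_a_left_semi} to a full Quillen model structure, and then matching its class of weak equivalences with $\Wcal(S)$. The pre-model structure axioms, the $2$-out-of-$3$ property for weak equivalences, the identification ``acyclic fibration $=$ anodyne fibration'' (from the right-saturation half of \cref{lem:left_sat}), and the identification ``core acyclic cofibration $=$ core anodyne cofibration'' (from the definition of a Fresse left semi-model category) are all already in hand. Thus what is missing for a Quillen model structure is precisely the characterization of anodyne cofibrations as \emph{every} cofibration that is a weak equivalence, dropping the cofibrant-domain hypothesis.

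For the promotion step I will use a retract argument. Given a cofibration $f$ that is a weak equivalence of the semi-model structure, I factor $f = p \circ j$ with $j$ an anodyne cofibration and $p$ a fibration. By \cref{prop:J_cof_are_eq}, $j$ is itself a weak equivalence, so by $2$-out-of-$3$ the map $p$ is a weak equivalence as well, hence an acyclic and therefore (by right saturation) an anodyne fibration. The lifting property of $f$ against $p$ then exhibits $f$ as a retract of $j$, and since anodyne cofibrations are closed under retract, $f$ is anodyne. I expect this to be the step that requires the most care, as it is precisely where the fact that \cref{prop:J_cof_are_eq} holds \emph{without} the cofibrant-domain restriction becomes indispensable; the usual semi-model category theory would not suffice here.

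For the identification of the equivalences I will write $\Wcal_Q$ for the class of weak equivalences of the resulting Quillen model structure and prove both inclusions. The inclusion $\Wcal_Q \subset \Wcal(S)$ will follow from the (anodyne cofibration, fibration) factorization: the fibration part of a weak equivalence is an anodyne fibration, hence in $\Wcal(S)$ by the localizer axioms, while the anodyne cofibration part is a retract of a $J$-cellular map with $J \subset \Wcal(S)$, and $\Wcal(S) \cap \cof$ is closed under pushout, transfinite composition, and (by \cref{lem:2_out_of_6}) retract. For the reverse inclusion $\Wcal(S) \subset \Wcal_Q$, since $\Wcal_Q$ is itself a $\Ccal$-localizer by the first half of the proof, it suffices to check $S \subset \Wcal_Q$: using the second point of \cref{ass:on_kappa}, I factor each $s \in S$ as a cofibration $i$ between $\kappa$-presentable objects followed by an anodyne fibration $q$; two-out-of-three in $\Wcal(S)$ places $i$ in $J$, so $i$ is a weak equivalence by \cref{prop:J_cof_are_eq} and $q$ is one as an anodyne fibration, whence $s \in \Wcal_Q$ by $2$-out-of-$3$.
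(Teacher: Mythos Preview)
Your proof is correct and follows essentially the same route as the paper: the retract argument using \cref{prop:J_cof_are_eq} to upgrade the semi-model structure to a Quillen one, the check that $S \subset \Wcal_Q$ via a $\kappa$-presentable factorization (placing the cofibration part in $J$), and the factorization of an arbitrary equivalence to obtain $\Wcal_Q \subset \Wcal(S)$. The only cosmetic difference is that the paper phrases the last step using the $J$-cellular/anodyne-fibration factorization coming directly from the small object argument, whereas you pass through the abstract anodyne-cofibration/fibration factorization and invoke the retract-closure of $\Wcal(S)\cap\cof$ from \cref{lem:2_out_of_6}; both are fine.
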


\begin{proof}
  We have showed as \cref{prop:C_is_a_left_semi} that it is a left semi-model structure, as \cref{lem:left_sat} that it is left saturated, and as \cref{prop:J_cof_are_eq} that all its anodyne cofibrations are weak equivalences. This immediately imply that it is a Quillen model structure: The only remaining axiom to prove is that a cofibration which is an equivalence is an anodyne cofibration, but given such a cofibration $i$, if we factor it as $p \circ j$ with $j$ an anodyne cofibration followed by a fibration $p$, then $p$ is an equivalence by $2$-out-of-$3$ hence is an anodyne fibration. It follows by the usual retract lemma that as $i$ has the lifting property against $p$ it is a retract of $j$ and hence is itself an anodyne fibration.

As $\Ccal$ is a Quillen model structure, the class of equivalences $\Ecal$ of $\Ccal$ is a localizer. We need to show that it is $\Wcal(S)$.

We first show that all maps in $S$ are equivalences of $\Ccal$, this proves that $\Wcal(S) \subset \Ecal$. Indeed if $f :a \to b$ is in $S$, then it is an arrow between $\kappa$-presentable objects by \cref{ass:on_kappa}. We consider $a \cto c \overset{\ano}{\twoheadrightarrow} b$ be a (cofibration, anodyne fibrations) factorization, where, again by \cref{ass:on_kappa} we have that $c$ is $\kappa$-presentable. The cofibration $a \cto c$ is in $\Wcal(S)$ by $2$-out-of-$3$ for $\Wcal(S)$, which imply that it is in $J$ and hence is an equivalence of $\Ccal$. By $2$-out-of-$3$ for $\Ecal$ this imply that $f$ is an equivalence.

Conversely, we show that $\Ecal \subset \Wcal(S)$. Indeed an arrow of $\Ecal$ factors as a $J$-cellular map followed by an anodyne fibrations. The anodyne fibration is in $\Wcal(S)$ by definition of a localizer, and as all maps in $J$ are cofibrations in $\Wcal(S)$, all $J$-cellular maps are in $\Wcal(S)$ again by definition of a localizer. This proves $\Ecal \subset \Wcal(S)$. \end{proof}

We conclude this section by a couple of lemmas that we initially thought would allow to remove the need for the additional assumption in \cref{main_th:Quillen_refined} (the existence of $J_0$) but in the end are not quite enough, see \cref{rk:last_condition}. We include this in the hope that someone will have an idea on how to remove the assumption:

\begin{lemma}\label{lem:acylic_fib_are_inW} Given any $\Ccal$-localizer $\Wcal$ on a structured $\Ccal$, a map with the right lifting property against all core cofibrations is in $\Wcal$. \end{lemma}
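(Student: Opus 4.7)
\smallskip

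\noindent\textbf{Proof plan.} The idea is to reduce to a 2-out-of-6 situation using a cofibrant replacement of the source, so that the anodyne fibrations coming out of the weak factorization system appear as composites involving $p$.

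First, I would factor $\emptyset \to X$ as $\emptyset \cto \tilde X \overset{\ano}{\twoheadrightarrow} X$ using the given weak factorization system, so that $\tilde X$ is cofibrant and $\pi_X : \tilde X \overset{\ano}{\twoheadrightarrow} X$ is an anodyne fibration (in particular $\pi_X \in \Wcal$). Then I would factor the composite $p \circ \pi_X : \tilde X \to Y$ as $\tilde X \overset{\tilde i}{\cto} \tilde Z \overset{\tilde q}{\twoheadrightarrow} Y$ with $\tilde q$ an anodyne fibration (hence $\tilde q \in \Wcal$). Because the domain $\tilde X$ is cofibrant, $\tilde i$ is a \emph{core} cofibration.

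Next, by the assumption on $p$, the lifting problem
\[
\begin{tikzcd}
\tilde X \ar[r,"\pi_X"] \ar[d,cto,"\tilde i"swap] & X \ar[d,"p"] \\
\tilde Z \ar[r,"\tilde q"swap] & Y
\end{tikzcd}
\]
admits a solution $s : \tilde Z \to X$, which yields two composable identities $s \circ \tilde i = \pi_X$ and $p \circ s = \tilde q$, both in $\Wcal$. The three maps $\tilde i, s, p$ are thus composable with both ``length two'' composites in $\Wcal$, so the 2-out-of-6 property established in \cref{lem:2_out_of_6} immediately gives $p \in \Wcal$, which is what was to be shown.

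The main (and essentially only) observation is that cofibrant-replacing the source $X$ lets us convert the lifting hypothesis against \emph{core} cofibrations into a concrete retract-like datum relating $p$ to two anodyne fibrations; no further use of the structure of $\Ccal$ is needed beyond 2-out-of-6, which is already available for any localizer.
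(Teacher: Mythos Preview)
Your proof is correct and essentially identical to the paper's own argument: the paper also takes a cofibrant replacement $X^{\cof}\overset{\ano}{\twoheadrightarrow} X$, factors $X^{\cof}\to Y$ as a core cofibration followed by an anodyne fibration, lifts against the resulting core cofibration, and concludes by $2$-out-of-$6$. Only the notation differs.
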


\begin{proof} Let $\pi:X \to Y$ be such a map. By taking (cofibration,anodyne fibrations) factorizations we form a solid diagram
\[\begin{tikzcd}
    X^\cof \ar[d,cto] \ar[r,two heads,"\ano"] & X \ar[d,"\pi"] \\
 Y^\cof \ar[ur,dotted] \ar[r,two heads,"\ano"] & Y
  \end{tikzcd} \]
The assumption on $f$ imply the existence of a dotted arrow. As the two anodyne fibrations are in $\Wcal$, the $2$-out-of-$6$ condition for $\Wcal$ imply that all maps in the diagram above, including $f$, are equivalences.
\end{proof}

\begin{lemma}\label{lem:W_and_fib_imply_trivial_fib} A map in $\Wcal$ is an anodyne fibration if and only if it has the right lifting property against all cofibration that are $\Wcal$. \end{lemma}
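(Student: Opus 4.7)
The forward direction is immediate: any anodyne fibration has the right lifting property against all cofibrations by definition of the weak factorization system, and in particular against those cofibrations which happen to lie in $\Wcal$.

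For the converse, let $\pi : X \to Y$ be a map in $\Wcal$ which has the right lifting property against all cofibrations in $\Wcal$. The plan is to use the (cofibration, anodyne fibration) factorization and then a standard retract argument. Specifically, factor
\[ \pi \; : \; X \overset{i}{\cto} Z \overset{p}{\twoheadrightarrow} Y \]
with $i$ a cofibration and $p$ an anodyne fibration. By condition (2) in the definition of a localizer, $p \in \Wcal$; since $\pi \in \Wcal$ by hypothesis, the $2$-out-of-$3$ property gives $i \in \Wcal$. Thus $i$ is itself a cofibration lying in $\Wcal$.

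By assumption, $\pi$ has the right lifting property against $i$, so the square
\[
\begin{tikzcd}
X \ar[r,equal] \ar[d,cto,"i"'] & X \ar[d,"\pi"] \\
Z \ar[r,"p"'] \ar[ur,dotted,"r"] & Y
\end{tikzcd}
\]
admits a diagonal $r$ with $r \circ i = \mathrm{id}_X$ and $\pi \circ r = p$. This exhibits $\pi$ as a retract of $p$ via the diagram
\[
\begin{tikzcd}
X \ar[r,"i",cto] \ar[d,"\pi"'] & Z \ar[d,"p"] \ar[r,"r"] & X \ar[d,"\pi"] \\
Y \ar[r,equal] & Y \ar[r,equal] & Y,
\end{tikzcd}
\]
whose horizontal composites are identities. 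Since anodyne fibrations are closed under retracts (as the right class of a weak factorization system), $\pi$ is an anodyne fibration, as required. There is no real obstacle here; the argument is entirely formal once one observes that anodyne fibrations are automatically members of any localizer.
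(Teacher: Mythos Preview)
Your argument is correct and in fact cleaner than the one in the paper. The paper does not use the retract lemma; instead it takes an arbitrary lifting problem of $\pi$ against a cofibration $A \cto B$, factors the top map $A \to X$ as $A \cto C \overset{\ano}{\twoheadrightarrow} X$, pushes out to get $B \cto D$, factors $D \to Y$ as $D \cto E \overset{\ano}{\twoheadrightarrow} Y$, and then observes by $2$-out-of-$3$ (using $\pi \in \Wcal$ and the two anodyne fibrations) that the cofibration $C \cto E$ lies in $\Wcal$, so the hypothesis on $\pi$ produces a diagonal $E \to X$ which solves the original problem. Your approach avoids this machinery entirely: once you notice that $\pi \in \Wcal$ forces the cofibration half $i$ of its own factorization to lie in $\Wcal$, the retract argument finishes in one step. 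The paper's route has the minor advantage of exhibiting an explicit lift for each square, but yours is shorter and uses nothing beyond the standard retract lemma for weak factorization systems.
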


\begin{proof} An anodyne fibration has the lifting property against all cofibrations. So we only need to show that a map $\pi : X \to Y$ which is in $\Wcal$ and has the lifting property against all cofibrations in $\Wcal$ is an anodyne fibration. We consider a lifting problem

\[\begin{tikzcd}
  A \ar[d,cto] \ar[r] & X \ar[d,"\pi"] \\
 B \ar[r] & Y
\end{tikzcd}\]
  
\noindent against an arbitrary cofibration. We factor the top map as a cofibration followed by an anodyne fibration, and form a pushout
\[\begin{tikzcd}
  A \ar[d,cto] \ar[r,cto] \ar[dr,phantom,"\ulcorner"very near end] & C \ar[d,cto] \ar[r,two heads,"\ano"] & X \ar[d,"\pi"] \\
 B \ar[r,cto]  & D \ar[r] & Y
\end{tikzcd}\]

Finally, we factor that map $D \to Y$ into a cofibration followed by an anodyne fibration
\[\begin{tikzcd}
  A \ar[d,cto] \ar[r,cto] \ar[dr,phantom,"\ulcorner"very near end] & C  \ar[dr,cto] \ar[d,cto] \ar[rr,two heads,"\ano"] &  & X \ar[d,"\pi"] \\
 B \ar[r,cto]  & D \ar[r,cto] & E \ar[ur,dotted] \ar[r,two heads,"\ano"] & Y
\end{tikzcd}\]

By $2$-out-of-$3$, as $\pi$ and the anodyne fibrations are in $\Wcal$, the cofibration $C \cto E$ is also in $\Wcal$, hence the dotted lifting exists by the assumption on $\pi$. It provides a solution to the initial lifting problem.\end{proof}

\begin{remark}\label{rk:last_condition} It hence follows from \cref{lem:acylic_fib_are_inW,lem:W_and_fib_imply_trivial_fib} that a map in $\Ccal$ that has the right lifting property against all cofibrations in $\Wcal$ and all core cofibrations is an anodyne fibrations. However, this argument does not seem to be sufficient to deduce that the last condition of \cref{ass:on_kappa}, which exactly say that it is enough to check these conditions only against cofibrations between $\kappa$-presentable objects for some fixed cardinal $\kappa$.
\end{remark}

\section{Left semi-localizer}
\label{sec:left_semi_localizer}

It does not seem possible to develop an analogue of the theory of localizer for right semi-model categories and weak model categories due to the fact that in a right (or weak) semi-model category only the anodyne fibrations with fibrant target are equivalences, but to determine which objects are fibrant we need to know the class of all equivalences. Concretely, if one tries to write down the type of properties the class of equivalences of a right semi-model category should have we end up with a notion that is not stable under intersection and for which their might not be a class generated by some set of arrows, nor a minimal such class. However, it is possible to develop such a theory for left semi-model categories.

\begin{definition} A left semi-localizer on a structured category $\Ccal$ is a class $\Wcal$ of arrows of $\Ccal$ such that:

\begin{itemize}

\item $\Wcal$ satisfies the $2$-out-of-$3$ condition.

\item All anodyne fibrations are in $\Wcal$.

\item The class of \emph{core} cofibrations that are in $\Wcal$ is stable under transfinite composition and pushout to a cofibrant object.

\end{itemize}

If $S$ is a set of arrows $\Ccal$ we denote by $\Wcal^L(S)$ the smallest left semi-localizer containing $S$. Such left semi-localizer are called small-generated.

\end{definition}

The class of equivalences of a left semi-model structure on $\Ccal$ is a left semi-localizer on $\Ccal$.

Note that we can still prove that a left semi-localizer satisfies the $2$-out-of-$6$ condition as in \cref{lem:2_out_of_6}: the same proof still applies to arrows between cofibrant objects, and we can generalize this to arbitrary object by taking iterative cofibrant replacements.

\begin{theorem}\label{th:left_semi_localizer} If $\Ccal$ is a combinatorial structured category and $\Wcal$ is a class of arrow in $\Ccal$, then $\Wcal$ is the class of equivalence of a combinatorial Fresse left semi-model category if and only if $\Wcal$ is a small-generated left semi-localizer. \end{theorem}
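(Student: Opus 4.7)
\emph{Easy direction.} If $\Ccal$ carries a combinatorial Fresse left semi-model structure with equivalences $\Wcal$ and $J$ is any set of generating anodyne cofibrations, then $\Wcal = \Wcal^L(J)$. The inclusion $\Wcal^L(J) \subset \Wcal$ is immediate from the axioms of a left semi-localizer. For the reverse inclusion, after precomposing with a cofibrant replacement of its source (itself an anodyne fibration, hence in $\Wcal^L(J)$), any $f \in \Wcal$ factors as a $J$-cellular map with cofibrant intermediate stages followed by an anodyne fibration; each successive pushout is of an arrow of $J$ along a map into a cofibrant object, so the composite lies in $\Wcal^L(J)$ by closure of core cofibrations in a left semi-localizer under pushout to cofibrant objects and transfinite composition.

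\emph{Hard direction.} Given $\Wcal = \Wcal^L(S)$ with $S$ a set of arrows, the plan is to mirror \cref{th:final_Quillen} but invoke \cref{prop:left_semi_from_small_cellular} in place of the Quillen construction, dispensing with the clause on $J_0$ (not needed here because a Fresse left semi-model category does not require its acyclic fibrations to be anodyne fibrations). Fix $\kappa$ as in \cref{ass:on_kappa}: uncountable, with $\Pr_\kappa \Ccal$ closed under functorial (cofibration, anodyne fibration) factorizations, and $S \subset \Pr_\kappa \Ccal^\to$. Define
\[ J \coloneqq \{ A \cto B \mid A, B \in \Pr_\kappa \Ccal,\ A \text{ cofibrant},\ A \cto B \in \Wcal^L(S) \}. \]
Condition $(iii')$ of \cref{th:left_semi_ms_from_small} is obtained by the codiagonal trick of \cref{prop:C_is_a_left_semi}: factoring the codiagonal $B \coprod_A B \cto I_A B \overset{\ano}{\twoheadrightarrow} B$ produces a cellular square whose vertical maps are in $J$ (by 2-out-of-3 in $\Wcal^L(S)$), with the anodyne fibration supplying the retraction. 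Condition $(ii)$ is immediate from 2-out-of-3 in $\Wcal^L(S)$, since the middle term of a composable pair of cofibrations between cofibrant objects is cofibrant.

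\emph{Equivalences and main obstacle.} The delicate point is condition $(i)$, closure of $J$ under pushout: the left semi-localizer axioms only furnish closure of core cofibrations in $\Wcal^L(S)$ under pushouts into cofibrant objects, so $J$ as defined is not literally pushout-stable in $\Pr_\kappa \Ccal$. However, every pushout appearing in the proofs of \cref{prop:ReedyDiag_satisfiesMainTh} and \cref{prop:left_semi_from_small_cellular} takes place within $I$-Reedy diagrams of cofibrant objects (these being built by cellular attachments from $\emptyset$), so the restricted pushout-stability of $J$ is sufficient to run those arguments. Granted this, \cref{prop:left_semi_from_small_cellular} produces a Fresse left semi-model structure on $\Ccal$; it remains to identify its class $\Ecal$ of weak equivalences with $\Wcal^L(S)$. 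The inclusion $\Wcal^L(S) \subset \Ecal$ is shown by checking that $\Ecal$ is itself a left semi-localizer (from the Fresse axioms) containing $S$; the containment of $S$ uses a cofibrant replacement of the source of each $s \in S$, a (cofibration, anodyne fibration) factorization, and 2-out-of-3, exactly as at the end of \cref{th:final_Quillen}. The reverse inclusion $\Ecal \subset \Wcal^L(S)$ then follows by the same argument as the easy direction, applied to the constructed model structure. The principal obstacle is thus the verification of condition $(i)$: it requires a careful re-inspection of \cref{sec:MS_from_small} to confirm that every pushout constructed there lands in a cofibrant object, so that the restricted closure property of $\Wcal^L(S)$ is enough to carry through the Reedy diagram and fat small object arguments.
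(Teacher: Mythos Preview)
Your overall strategy matches the paper's, but you leave a genuine gap at exactly the point you yourself flag as ``the principal obstacle'': condition $(i)$ of \cref{th:left_semi_ms_from_small}. Your set $J$ of core cofibrations in $\Wcal^L(S)$ between $\kappa$-presentable objects is \emph{not} closed under arbitrary pushout in $\Pr_\kappa\Ccal$, and your proposal to salvage this by re-inspecting \cref{sec:extention_to_Reedy,sec:MS_from_small} and checking that every pushout there lands in a cofibrant object is not carried out. Even if that inspection succeeds (and it plausibly does, since $I$-Reedy diagrams have cellular, hence cofibrant, values), you have written a plan rather than a proof at the crucial step.

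The paper sidesteps this entirely with a simple device you are missing: it replaces $J$ by
\[
J_+ \coloneqq \{\text{$\kappa$-small transfinite compositions of pushouts of arrows in $J$, in }\Pr_\kappa\Ccal\}.
\]
By construction $J_+$ satisfies condition $(i)$ on the nose. The key observation is that any arrow of $J_+$ with \emph{cofibrant} domain already lies in $J$: starting from a cofibrant object, each successive pushout is into a cofibrant object, so the left semi-localizer axioms apply at every stage and the composite is a core cofibration in $\Wcal^L(S)$. This immediately gives conditions $(ii)$ and $(iii')$ for $J_+$, since those conditions only concern maps with cofibrant domain, where $J_+$ and $J$ coincide. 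One then applies \cref{prop:left_semi_from_small_cellular} to $J_+$ directly, with no need to revisit any earlier proof. The paper also first checks core left saturation (by the same argument as \cref{lem:left_sat}), so that the resulting Fresse structure is on $\Ccal$ itself rather than on its saturation; you gloss over this, though for the bare statement of the theorem it is not essential since the saturation has the same equivalences.
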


Note that a ``Smith's theorem'' for left semi-model category, i.e. a similar result to the above where small-generation is replaced by the solution set condition has been proved as  Theorem B of \cite{batanin2020Bousfield}, under tractability assumptions.

\begin{proof}[Sketch of proof] The proof is very similar to that of \cref{main_th:Quillen}, with just some cofibrancy hypothesis added in some places and some part that are no longer useful. We will sketch the proof, only insisting on the differences.

As before, we fix $\kappa$ exactly as in \ref{ass:on_kappa} (without the restriction involving $J_0$) and we take $J$ to be the set of all cofibrations between cofibrant objects that are in $\Wcal$ as our set of generating anodyne cofibration.

We first prove that this pre-model structure on $\Ccal$ is core left saturated, this is done in a way very similar to the proof of \cref{lem:left_sat}: the results of section 4 of \cite{henry2019CombWMS} also show\footnote{A careful reader might notice that, contrary to the non-core case, this does not exactly follows from the statement of Theorem~4.1 in \cite{henry2019CombWMS}. It however easily follows from its proof.} that the anodyne cofibrations of the core left saturation of $\Ccal$ are generated by the anodyne cofibrations of $\Ccal$ between $\kappa$-presentable objects and the core acyclic cofibrations between $\kappa$-presentable objects. It is hence enough to show that in $\Ccal$ all core acyclic cofibrations between $\kappa$-presentable objects are anodyne cofibrations, at which point the exact same argument as in the proof of \cref{lem:left_sat} can be used.

Next we define $J_+$ to be the class of arrows in $\Pr_\kappa \Ccal$ that are $\kappa$-small transfinite composition of pushouts of arrows in $J$. It is easy to see, due to the closure property of $J$, that $J \subset J_+$ and an arrow in $J_+$ with cofibrant domain is in $J$. We can immediately deduce that $J_+$ satisfies all the condition of \cref{th:left_semi_ms_from_small} exactly as in the case of Quillen model categories (condition $(iv)$ follows from tractability). This shows that the core left saturation of the pre-model structure on $\Ccal$, is a left semi-model structure. As we have showed that $\Ccal$ is core left saturation, $\Ccal$ it self is indeed a left semi-model category.

We can then apply the same argument as in \cref{th:final_Quillen} to show that the class of equivalences of this left semi-model structure is $\Wcal = \Wcal^L(S)$.
\end{proof}

\begin{remark} It has been showed in \cite{batanin2020Bousfield} (and reproved in \cite{henry2019CombWMS}) that left Bousfield localization of left semi-model category always exists. Hence in the case of left semi-localizer, the discussion we had in \cref{rk:no_Bousfield_loc} simplifies to the fact that the left-semi model structure whose class of equivalences is $\Wcal^L(S)$ is always exactly the left Bousfield localization of the minimal model structure at $S$ (whose equivalences are $\Wcal^L(\emptyset)$).
\end{remark}

\begin{remark}
  An interesting question is whether we can modify \cref{th:left_semi_localizer} to produce a Spitzweck left semi-model structure. A Fresse left semi-model structure which is a pre-model category is a Spitzweck left semi-model structure if it is right saturated, i.e. if every acyclic fibrations is an anodyne fibration. One can of course always take the right saturation (as presented in Section~4 of \cite{henry2019CombWMS}) to obtain a Spitzweck left semi-model category. This means redefining the cofibration as being the map with the left lifting property against acyclic fibrations. While one can consider this a satisfying answer, it has the drawback of changing the underlying structured category and hence not being a perfect analogue of \cref{main_th:Quillen} or \cref{th:left_semi_localizer}.

If one starts with $\Ccal$ a tractable structured category, the model structure obtained is automatically right saturated, so we have a perfect analogue of \cref{main_th:Quillen}: if $\Ccal$ is a tractable combinatorial structured category, then any small-generated left semi-localizer on $\Ccal$ is a the class of equivalences of a tractable combinatorial Spitzweck left semi-model structure.

An analogue of \cref{main_th:Quillen_refined} needs some refinement. One should observe that, in the proof of \cref{th:left_semi_localizer}, if one wants to take a larger set $J$ not confined to core cofibrations the important assumptions are

\begin{itemize}
\item $J$ contains (or generates) all core cofibrations in $\Wcal$ between $\kappa$-presentable objects,
\item any pushout of a an arrow in $J$ which has a cofibrant domain is in $\Wcal$.
\end{itemize}

With these two assumptions, the proof of \cref{th:left_semi_localizer} work unchanged and such a set $J$ can be used as a generating set of anodyne cofibrations of a Fresse left-module structure with $\Wcal$ as sets of weak equivalence. So the question can be rephrased as whether one can find such a set which generates a right saturated pre-model structure. Putting all this together:
\end{remark}

\begin{theorem} let $\Ccal$ be combinatorial structured category. Let $\Wcal$ be left semi-localizer on $\Ccal$. Then $\Wcal$ is the class of equivalences of a combinatorial Spitzweck left semi-model structure if and only if
  \begin{itemize}
  \item $\Wcal$ is a small-generated left semi-localizer.
  \item The (cofibration, anodyne fibration) weak factorization in $\Ccal$ is cofibrantly generated by core cofibrations and by cofibrations $j$ with the property that every pushout of $j$ which has a cofibrant domain is in $\Wcal$.
  \end{itemize}
\end{theorem}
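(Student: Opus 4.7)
The plan is to prove both implications, modeling the approach on \cref{th:final_Quillen} (the proof of \cref{main_th:Quillen_refined}), with the additional task of arranging right saturation.

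For necessity, suppose $\Wcal$ is the class of equivalences of a combinatorial Spitzweck left semi-model structure. Small-generation of $\Wcal$ as a left semi-localizer is obtained by taking any set $J$ of generating anodyne cofibrations and checking $\Wcal = \Wcal^L(J)$ by the usual factorization plus cofibrant replacement argument, analogously to the easy direction of \cref{main_th:Quillen}. For the second condition, choose $\kappa$ large enough, and take as generating cofibrations the union of a set of core cofibrations between $\kappa$-presentable objects (cofibrantly generating the core cofibrations) and a set of generating anodyne cofibrations. The right lifting property against this union is the intersection of acyclic fibrations and fibrations of the semi-model structure, which in the Spitzweck case is exactly the class of anodyne fibrations; so this union cofibrantly generates the (cofibration, anodyne fibration) wfs. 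The generating anodyne cofibrations in this union satisfy the required pushout property because a pushout of an anodyne cofibration to a cofibrant object is both an anodyne cofibration and a core cofibration, hence a core anodyne cofibration, hence in $\Wcal$ by the Fresse identification.

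For sufficiency, fix $\kappa$ satisfying an analogue of \cref{ass:on_kappa}, and let $J$ consist of the core cofibrations in $\Wcal$ between $\kappa$-presentable objects together with the generating cofibrations from the second hypothesis which have the pushout property, lie in $\Wcal$, and are between $\kappa$-presentable objects. By construction $J$ satisfies both bullet points of the remark preceding the theorem: it contains all core cofibrations in $\Wcal$ between $\kappa$-presentable objects, and every pushout of an arrow of $J$ to a cofibrant object lies in $\Wcal$ (for core cofibrations this uses the pushout closure axiom of a left semi-localizer, for pushout-property generators it is the condition itself). The adaptation of the argument of \cref{th:left_semi_localizer} described in that remark then produces a Fresse left semi-model structure on $\Ccal$ whose class of equivalences is exactly $\Wcal$.

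It remains to show that this Fresse structure is right saturated, i.e., that every fibration $\pi$ in $\Wcal$ has the right lifting property against all cofibrations, making the structure Spitzweck. Since $\pi$ is already an acyclic fibration (Fresse identification), it lifts against core cofibrations. By the second hypothesis, the remaining generating cofibrations are maps $j\colon A\cto B$ with the pushout property, and the strategy is as follows: replace $X$ and $Y$ by cofibrant objects $\tilde X,\tilde Y$ equipped with a cofibrant model $\tilde\pi\colon\tilde X\to\tilde Y$ of $\pi$; cofibrantly replace $A$ by $\tilde A$ and factor $\tilde A\to A\to B$ as a cofibration $\tilde A\cto\tilde B$ followed by an anodyne fibration $\tilde B\to B$, making $\tilde B$ cofibrant; use the pushout property of $j$ to identify certain pushouts of $\tilde A\cto\tilde B$ with core cofibrations in $\Wcal$, hence with anodyne cofibrations of the Fresse semi-model; lift against a composite involving $\tilde\pi$; and finally descend along $B\to\tilde B$ to the desired lift for the original square. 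The main obstacle is this last descent step: the cofibrant model $\tilde A\cto\tilde B$ of $j$ produced above need not itself lie in $\Wcal$ (since $j$ need not), so the direct Fresse identification of core cofibrations in $\Wcal$ with core anodyne cofibrations is unavailable for $\tilde A\cto\tilde B$ itself. What is available is the pushout property, which must be exploited by pushing $\tilde A\cto\tilde B$ further along suitable cofibrations to cofibrant objects in order to produce genuine anodyne cofibrations of the semi-model structure. This is the semi-model analogue of the automatic right saturation enjoyed by tractable Quillen model structures, now rephrased in terms of the generating data supplied by the second hypothesis.
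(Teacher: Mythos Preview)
Your necessity direction and the construction of the Fresse left semi-model structure are essentially in line with the paper's approach. The genuine gap is in the right saturation argument, which you yourself flag as incomplete (``The main obstacle is this last descent step\dots''). You are working much too hard there, and the strategy you outline does not obviously close.

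The point you are missing is that right saturation is \emph{immediate} from your choice of $J$, by exactly the mechanism used for right saturation in \cref{lem:left_sat}. You have included the special generators $j$ from the second hypothesis in $J$. Hence every fibration of the pre-model structure already has the right lifting property against each such $j$. Now an acyclic fibration $\pi$ is by definition a fibration with the right lifting property against all core cofibrations; combining, $\pi$ lifts against both the core cofibrations and the special $j$'s. But the second hypothesis says precisely that these two families together cofibrantly generate the (cofibration, anodyne fibration) weak factorization system, so $\pi$ is an anodyne fibration. Since in the Fresse structure a fibration is acyclic if and only if it is in $\Wcal$, this shows every fibration in $\Wcal$ is anodyne, i.e.\ the structure is Spitzweck. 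No cofibrant replacements, no descent, no use of the pushout property are needed at this step; the pushout property was only needed earlier to ensure that enlarging $J$ does not break the identification of core anodyne cofibrations with core cofibrations in $\Wcal$.

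A smaller issue: you restrict the special generators you add to $J$ to those that ``lie in $\Wcal$''. This is unnecessary (the remark preceding the theorem only requires that pushouts \emph{with cofibrant domain} lie in $\Wcal$, not the generator itself) and is potentially harmful, since dropping generators not in $\Wcal$ could destroy exactly the property you need for the right saturation argument above.
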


\bibliography{../../../Biblio}{}
\bibliographystyle{plain}

\end{document}